\documentclass[12pt]{amsart}

\usepackage{amsmath}
\usepackage{graphicx}
\usepackage{epstopdf}
\usepackage{amscd}
\usepackage[margin=1in]{geometry}
\usepackage{amsfonts}
\usepackage{amssymb}
\usepackage{titlesec}

 		\usepackage{pgf}
 		\usepackage{tikz}
		\usetikzlibrary{decorations.pathreplacing}

		\newcommand{\tab}{\indent\hspace{.25in}}
		\newcommand{\Par}{ \par \vspace{0.125in} }

		\setlength{\baselineskip}{19pt}

		\newtheorem{theorem}{Theorem}[section]
		\newtheorem{lemma}[theorem]{Lemma}
		\newtheorem{proposition}[theorem]{Proposition}
		\newtheorem{corollary}[theorem]{Corollary}
		\newtheorem{claim}[theorem]{Claim}
		\newtheorem{fact}[theorem]{Fact}

		\newtheoremstyle{example}{8pt}{8pt}%
     {}%         Body font
     {}%         Indent amount (empty = no indent, \parindent = para indent)
     {\bfseries}% Thm head font
     {}%        Punctuation after thm head
     {6pt}%     Space after thm head (\newline = linebreak)
     {\thmname{#1}\thmnumber{ #2}\thmnote{ #3}}%         Thm head spec

   \theoremstyle{example}
   \newtheorem{example}[theorem]{Example}
	\newtheorem{remark}[theorem]{Remark}
	\newtheoremstyle{definition}{8pt}{8pt}%
     {}%         Body font
     {}%         Indent amount (empty = no indent, \parindent = para indent)
     {\bfseries}% Thm head font
     {}%        Punctuation after thm head
     {6pt}%     Space after thm head (\newline = linebreak)
     {\thmname{#1}\thmnumber{ #2}\thmnote{ #3}}%         Thm head spec
	\theoremstyle{definition}
   \newtheorem{definition}[theorem]{Definition}

\titleformat{\section}[block]
{\normalfont\Large\bfseries}{Part\ \thesection :}{4pt}{}
\titleformat{\subsection}
{\normalfont\large\bfseries}{\thesubsection}{4pt}{}
%\newenvironment{example}[1][Example]{\begin{trivlist}
%\item[\hskip \labelsep {\bfseries #1}]}{\end{trivlist}}
%\newenvironment{remark}[1][Remark]{\begin{trivlist}
%\item[\hskip \labelsep {\bfseries #1}]}{\end{trivlist}}
%%%%%%%%%%%%%%%%%%%%%%%
%%END HEADER%%%%%%%%%%%%
%%%%%%%%%%%%%%%%%%%%%%%
\setlength{\parindent}{0mm}

\begin{document}
\pagestyle{plain}
\thispagestyle{empty}
\title[$\mathcal{Z}$-structures on Product Groups]{$\mathcal{Z}$-Structures on Product Groups }
\author{Carrie J. Tirel}\thanks{The contents of this paper have been extracted from the author's dissertation for the degree of Doctor of Philosophy at the University of Wisconsin-Milwaukee, which was written under the supervision of Professor Craig R. Guilbault.}

\begin{abstract}A $\mathcal{Z}$\textit{-structure} on a group $G$, defined by M. Bestvina, is a pair $
(\widehat{X}, Z)$ of spaces such that $\widehat{X}$ is a compact ER, $Z$ is a \begin{math}\mathcal{Z}\end{math}-set in $\widehat{X}$, $G$ acts properly and cocompactly on $X=\widehat{X}\backslash Z$, and the collection of translates of any compact set in $X$ forms a null sequence in $\widehat{X}$.  It is natural to ask whether a given group admits a \begin{math}\mathcal{Z}\end{math}-structure.  In this paper, we will show that if two groups each admit a $\mathcal{Z}$-structure, then so do their free and direct products.
\end{abstract}
\maketitle

\section{Introduction}

\subsection{Preliminaries}

Introduced by M. Bestvina in \cite{bestvina96}, a $\mathcal{Z}$-structure on a group is an extension of the notion of a boundary on a CAT(0) or hyperbolic group to more general groups.  Specifically, a $\mathcal{Z}$-structure mimics not only the concept of compactifying the space on which the group $G$ acts in a particularly nice way, but also the fact that boundaries on CAT(0) and hyperbolic groups satisfy a ``null condition,'' essentially meaning that compact sets get ``small'' as they are pushed toward the boundary by elements of $G$.  Here we will review the definitions of $\mathcal{Z}$- and $\mathcal{EZ}$-structures along with several other preliminary definitions and results which will be used later.
\Par
\textit{Note:}  Bestvina's original definition of a $\mathcal{Z}$-structure implies that $G$ is torsion-free.  A. Dranishnikov generalized the definition in \cite{dranishnikov06} to include groups with torsion.  We will use the more general definition in this paper.
\Par
\textbf{Convention:}  In this paper, we assume that all spaces are locally compact, separable metric spaces.

\begin{definition}  A subspace $A$ of a space $X$ is a \textbf{retract} of $X$ if there exists a map $r:X\rightarrow A$ extending $\text{id}_A:A\rightarrow A$.  A subspace $A$ of a space $X$ is a \textbf{strong deformation retract} of $X$ if there exists a homotopy $H:X\times [0,1]\rightarrow X$ satisfying $H_0\equiv \text{id}_{X}$, $H_1(X)\subseteq A$, and $H_t(A)\equiv\text{id}_A$ for all $t\in[0,1]$.\end{definition}

\begin{definition}  A separable metric space $X$ is an \textbf{absolute retract} (or AR) if, whenever $X$ is embedded as a closed subset of another separable metric space $Y$, its image is a retract of $Y$.  $X$ is an \textbf{absolute neighborhood retract} (or ANR) if, whenever $X$ is embedded as a closed subset of another separable metric space $Y$, some neighborhood of $X$ in $Y$ retracts onto $X$.\end{definition}

\begin{definition}  A space $X$ is a \textbf{Euclidean retract} (or ER) if it can be embedded in some Euclidean space as its retract.  $X$ is a \textbf{Euclidean neighborhood retract} (or ENR) if it can be embedded in some Euclidean space $\mathbb{R}^n$ in such a way that a neighborhood of $X$ in $\mathbb{R}^n$ retracts onto $X$.\end{definition}

We recite here a well-known fact concerning a relationship between ANR's, AR's, and ER's, and two useful properties of AR's:

\begin{fact}  \label{ARER}If $X$ is a finite dimensional space, then the following are equivalent:
\\
\tab (i)  $X$ is an AR.
\\
\tab (ii)  $X$ is a contractible ANR.
\\
\tab (iii)  $X$ is an ER.
\\
\tab (iv)  $X$ is contractible and locally contractible.
\end{fact}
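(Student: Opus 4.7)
My plan is to establish the equivalences by a mix of easy implications that hold for any separable metric space, (iii) $\Rightarrow$ (i) $\Rightarrow$ (ii) $\Rightarrow$ (iv), together with harder ones that close the cycle back to (iii). The finite-dimensionality hypothesis is needed only when translating between the intrinsic properties (being an AR, ANR, contractible, locally contractible) and the extrinsic Euclidean-embedding property of being an ER. The key tool for those extrinsic passages is the Menger--N\"obeling embedding theorem, which places any $n$-dimensional separable metric space as a closed subset of $\mathbb{R}^{2n+1}$.

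For the elementary implications: (iii) $\Rightarrow$ (i) follows because $\mathbb{R}^n$ is an AR (via Dugundji's extension theorem), and a retract of an AR is again an AR; given $f\colon A \to X$ with $A$ closed in $Y$, first extend the composition $A \hookrightarrow X \hookrightarrow \mathbb{R}^n$ to a map $Y \to \mathbb{R}^n$ using the AR property of $\mathbb{R}^n$, then post-compose with the retraction $\mathbb{R}^n \to X$. The implication (i) $\Rightarrow$ (ii) is essentially immediate: ANR holds because a retraction is \emph{a fortiori} a neighborhood retraction, and contractibility follows by embedding $X$ as a closed subset of the Hilbert cube $Q$, retracting onto $X$, and composing with a contraction of $Q$. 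Finally (ii) $\Rightarrow$ (iv) is standard: ANRs are locally contractible because small loops in the ambient space can be contracted there and then pushed back into $X$ via the neighborhood retraction.

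The nontrivial implications I would cite from the Borsuk--Hu theory of retracts. First, (iv) $\Rightarrow$ (ii) is Borsuk's theorem that a finite-dimensional locally contractible separable metric space is an ANR; its proof uses a Menger--N\"obeling embedding together with a nerve-type construction that manufactures a neighborhood retraction from the local contractions. Second, (ii) $\Rightarrow$ (i) is Borsuk's theorem that a contractible ANR is an AR, which follows by exploiting the homotopy extension property afforded by the ANR condition together with the global contraction to extend maps across closed subsets. Finally, (i) $\Rightarrow$ (iii) is now easy given finite-dimensionality: Menger--N\"obeling embeds $X$ as a closed subset of some $\mathbb{R}^{2n+1}$, and the AR property supplies a retraction $\mathbb{R}^{2n+1} \to X$, certifying $X$ as an ER.

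The main obstacle is (iv) $\Rightarrow$ (ii), i.e.\ promoting the purely local data of local contractibility to the genuinely global structure of a neighborhood retraction; this is where finite-dimensionality enters essentially, since infinite-dimensional locally contractible compacta need not be ANRs. Rather than recapitulate Borsuk's proof, I would reference Hu's \emph{Theory of Retracts} for this implication and for (ii) $\Rightarrow$ (i), since the statement is invoked only as a black-box tool in what follows.
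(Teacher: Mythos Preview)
The paper does not prove this fact at all; it is stated as a ``well-known fact'' without proof (or even a specific reference for the full equivalence), so there is no argument to compare against. Your outline is a correct and standard sketch: the cycle (iii) $\Rightarrow$ (i) $\Rightarrow$ (ii) $\Rightarrow$ (iv) holds for general separable metric spaces, while (iv) $\Rightarrow$ (ii) is Borsuk's theorem requiring finite-dimensionality, (ii) $\Rightarrow$ (i) is the classical characterization of ARs as contractible ANRs, and (i) $\Rightarrow$ (iii) uses the Menger--N\"obeling embedding together with the AR extension property. Your citation of Hu's \emph{Theory of Retracts} for the nontrivial implications is appropriate and in the spirit of the paper, which references Hu elsewhere (e.g.\ for Fact~\ref{sdr}). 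In short, you have supplied more than the paper does.
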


\begin{fact} \label{ARextend}  If $X$ is an AR, and $A$ is a closed subspace of a separable metric space $Y$, then every map $f:A\rightarrow X$ extends to $Y$.\end{fact}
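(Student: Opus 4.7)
The plan is to use the adjunction space construction, which is the standard route from the intrinsic AR property (as stated, in terms of closed embeddings and retractions) to an extension theorem for arbitrary maps.

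First, I would form the adjunction space $Z = Y \cup_f X$, defined as the quotient of the disjoint union $Y \sqcup X$ by the equivalence relation identifying each $a \in A$ with its image $f(a) \in X$. Let $q : Y \sqcup X \to Z$ denote the quotient map, and let $i_Y : Y \to Z$ and $i_X : X \to Z$ be the natural maps. The essential structural claims I would establish are:
\begin{enumerate}
\item[(a)] $Z$ is a separable metric space;
\item[(b)] $i_X : X \to Z$ is a closed embedding;
\item[(c)] for every $a \in A$, one has $i_Y(a) = i_X(f(a))$.
\end{enumerate}
Claim (c) is immediate from the definition of $\sim$, and (b) follows from the fact that $A$ is closed in $Y$ together with the standard description of the quotient topology on an adjunction along a closed subspace. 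Claim (a) is the genuinely nontrivial point: one must produce a metric on $Z$ that is compatible with the quotient topology and is separable. This is a classical metrization lemma for adjunction spaces (going back to Hanner/Borsuk in the context of ANR theory), obtained by combining metrics on $Y$ and $X$ with a suitably bounded term measuring distance through $A$.

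Once (a)--(c) are in place, the rest is formal. By (a) and (b), $X$ sits as a closed subspace of the separable metric space $Z$. Since $X$ is an AR, the definition furnishes a retraction $r : Z \to X$ with $r \circ i_X = \mathrm{id}_X$. Define $F : Y \to X$ by $F = r \circ i_Y$. For any $a \in A$, property (c) gives $F(a) = r(i_Y(a)) = r(i_X(f(a))) = f(a)$, so $F$ extends $f$, as required.

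The main obstacle is precisely the metrization step in (a); everything else is either a definition-chase or a direct application of the AR hypothesis. I would present (a) as an invocation of a standard lemma about adjunction spaces of separable metric spaces along closed subsets (the construction of the metric being routine but notationally heavy), and keep the emphasis of the argument on the clean conceptual flow: \emph{reduce the extension problem to a retraction problem by gluing, then apply the defining property of an AR}.
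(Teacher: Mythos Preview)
Your argument is correct and is exactly the classical route (due essentially to Borsuk and Hanner) to this extension property of ARs. The paper, however, does not prove Fact~\ref{ARextend} at all: it is simply recited as a well-known fact, alongside Facts~\ref{ARER} and~\ref{retract}, with no proof given. So there is nothing to compare against; your proposal fills in what the paper leaves to the literature.

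One small comment on presentation: you correctly flag the metrizability of the adjunction space $Z = Y \cup_f X$ as the only genuinely nontrivial step, and you handle it by invoking a standard lemma. That is appropriate here, but be aware that this lemma is itself the heart of the matter and is not entirely trivial (one typically bounds the given metrics and defines $d_Z$ as an infimum over chains passing through $A$; separability follows since both $Y$ and $X$ are separable). If you were writing this up in a context where the reader might not know the reference, you would want to either sketch that construction or give a precise citation (e.g., Hu, \emph{Theory of Retracts}, or Borsuk, \emph{Theory of Retracts}). Everything else in your outline---closedness of $i_X(X)$, the retraction from the AR definition, and the verification that $F = r \circ i_Y$ restricts to $f$ on $A$---is exactly right.
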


\begin{fact}  \label{retract} Every retract of an AR is an AR.\end{fact}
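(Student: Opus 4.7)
The plan is to unwind the definition of AR and exploit the extension property of Fact \ref{ARextend}. Let $A$ be a retract of the AR $X$, and let $r : X \to A$ be the retraction. To check that $A$ is an AR, I would start by embedding $A$ as a closed subset of an arbitrary separable metric space $Y$ and then produce a retraction $Y \to A$.

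The key move is to apply Fact \ref{ARextend} to the inclusion $i : A \hookrightarrow X$. Since $X$ is an AR and $A$ is closed in $Y$, this fact guarantees an extension $\widetilde{i} : Y \to X$ of $i$. Composing with the given retraction gives a map $r \circ \widetilde{i} : Y \to A$.

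It then remains only to verify that $r \circ \widetilde{i}$ restricts to the identity on $A$: for any $a \in A$, $\widetilde{i}(a) = i(a) = a$, and $r(a) = a$ since $r$ retracts $X$ onto $A$. Thus $r \circ \widetilde{i}$ is the desired retraction, and $A$ is an AR.

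There is no genuine obstacle here; the entire content is recognizing that the two hypotheses (AR extension of maps into $X$, existence of a retraction $X \to A$) compose in exactly the right way. The only small thing to keep in mind is that Fact \ref{ARextend} requires $A$ to be closed in $Y$, which is precisely the hypothesis built into the definition of an AR, so no additional separation argument is needed.
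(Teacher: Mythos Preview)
Your argument is correct and is the standard proof of this classical fact. Note that the paper does not actually supply a proof of Fact~\ref{retract}; it is merely cited as a well-known property of AR's, alongside Facts~\ref{ARER} and~\ref{ARextend}. Your proposal fills in precisely the argument one would expect, and it uses Fact~\ref{ARextend} in exactly the intended way, so there is nothing to compare against and nothing to correct.
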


\begin{fact}(Theorem III.7.10 of \cite{hu})  \label{sdr}Let $X$ be an AR.  Then a closed subspace $A$ of $X$ is an AR if and only if $A$ is a strong deformation retract of $X$.\end{fact}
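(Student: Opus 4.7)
The plan is to establish Fact \ref{sdr} by treating the two implications separately, leaning on Fact \ref{ARextend} (extensions of AR-valued maps) and Fact \ref{retract} (retracts of AR's are AR's). Everything sits naturally inside the AR framework already developed, so no additional machinery should be needed.

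For the easier implication $(\Leftarrow)$, suppose $A$ is a strong deformation retract of $X$ via a homotopy $H:X\times[0,1]\to X$. Then $r:=H_1:X\to A$ satisfies $r|_A=\text{id}_A$, so $r$ is a retraction. Since $X$ is an AR, Fact \ref{retract} immediately yields that $A$ is an AR. For the harder implication $(\Rightarrow)$, suppose $A$ is a closed AR in the AR $X$. First, I would apply Fact \ref{ARextend} to the map $\text{id}_A:A\to A$ (whose target is an AR, and whose domain is a closed subspace of $X$) to obtain a retraction $r:X\to A$. Second, I would consider the closed subspace
\[
W:=(X\times\{0\})\cup(A\times[0,1])\cup(X\times\{1\})\subseteq X\times[0,1]
\]
and define $f:W\to X$ piecewise by $f(x,0)=x$, $f(a,t)=a$ for $(a,t)\in A\times[0,1]$, and $f(x,1)=r(x)$. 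Because $r|_A=\text{id}_A$, the three pieces agree on the overlaps $A\times\{0\}$ and $A\times\{1\}$, so $f$ is continuous. Now, since the target $X$ is an AR and $W$ is closed in the separable metric space $X\times[0,1]$, Fact \ref{ARextend} applies a second time to extend $f$ to some $H:X\times[0,1]\to X$. By construction $H_0=\text{id}_X$, $H_1=r$ has image in $A$, and $H_t|_A=\text{id}_A$ for all $t$, so $H$ exhibits $A$ as a strong deformation retract of $X$.

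The main obstacle, modest as it is, lies in choosing the correct closed subspace $W$ and piecewise map $f$ so that the three defining conditions of a strong deformation retract are encoded as the boundary values of a single continuous map. Once this packaging is in place, the proof reduces to two applications of the AR extension property, and the remaining verifications (closedness of $W$, continuity of $f$ on overlaps, and the three conditions on $H$) are routine.
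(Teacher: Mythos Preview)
Your proof is correct and follows essentially the same approach as the paper: the paper also obtains a retraction $r:X\to A$ from the AR property of $A$, defines the same closed subset $Q=(X\times\{0\})\cup(A\times I)\cup(X\times\{1\})$ with the same piecewise map into $X$, and extends it over $X\times I$ using the AR property of $X$. The only cosmetic difference is that you explicitly invoke Fact~\ref{ARextend} both times, whereas the paper leaves the first application implicit.
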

\textit{Proof of Fact \ref{sdr}.}  The sufficiency follows from Fact \ref{retract}.  To prove the necessity, consider the closed subspace $Q:=\left(X\times\left\{0\right\}\right)\cup\left(A\times I\right)\cup\left(X\times \left\{1\right\}\right)$ of $X\times I$.  Since $A$ is an AR and is closed in $X$, there is a retraction $r:X\supseteq A$.  Define a map $F:Q\rightarrow X$ by taking
\Par
\begin{center}\begin{math}F(x,t):=\left\{\begin{array}{l l}x, & \text{ if } x\in X,t=0
\\
x, & \text{ if } x\in A, t\in I
\\
r(x), & \text{ if } x\in X, t=1\end{array}\right.\end{math}\end{center}

\Par
Since $X$ is an AR and $Q$ is closed in $X\times I$, then $F$ extends to $H:X\times I\rightarrow X$, which is a strong deformation retraction of $X$ to $A$.\hfill$\blacksquare$

\begin{definition}  A closed subset $Z$ of an ANR $\widehat{X}$ is a \textbf{$\mathcal{Z}$-set} in $\widehat{X}$ if there exists a homotopy $H:\widehat{X}\times [0,1]\rightarrow \widehat{X}$ such that $H_0\equiv \text{id}_{\widehat{X}}$ and $H_t(\widehat{X})\cap Z=\emptyset$ for all $t>0$.  In this situation, we will call $H$ a \textbf{$\mathcal{Z}$-set homotopy}.\end{definition}

\begin{definition}  An ANR $\widehat{X}$ is a \textbf{$\mathcal{Z}$-compactification of $X$} if $X\subseteq\widehat{X}$, $\widehat{X}$ is compact, and $Z:=\widehat{X}\backslash X$ is a $\mathcal{Z}$-set in $\widehat{X}$.\end{definition}

\begin{remark}  It is easy to see that if $\widehat{X}$ is a $\mathcal{Z}$-compactification of $X$, then the inclusion map $X\hookrightarrow\widehat{X}$ is a homotopy equivalence.  In fact, the inclusion $U\backslash Z\hookrightarrow U$ is a homotopy equivalence for every open subset $U$ of $\widehat{X}$.\end{remark}

\begin{lemma}  \label{sdrzcompactification}If $\widehat{X}$ is an AR which is a $\mathcal{Z}$-compactification of $X$, then there is a homotopy $\widehat{F}:\widehat{X}\times[0,1]\rightarrow\widehat{X}$ and a base point $x_0\in X$ such that $\widehat{F}_0\equiv\text{id}_{\widehat{X}}$, $\widehat{F}_t(\widehat{X})\cap\partial X=\emptyset$ if $t>0$, $\widehat{F}_1(\widehat{X})=\left\{x_0\right\}$ and $\widehat{F}(x_0,t)=x_0$ for all $t\in[0,1]$.\end{lemma}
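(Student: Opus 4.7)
The plan is to build $\widehat{F}$ as a composite $\widehat{F}(x,t) := H\bigl(G(x, t), f(x, t)\bigr)$, where $G$ is a strong deformation retraction of $\widehat{X}$ onto $\{x_0\}$, $H$ is a $\mathcal{Z}$-set homotopy, and $f$ is a scalar ``time warp'' that vanishes on the boundary faces where $\widehat{F}$ must coincide with $G$ and is positive elsewhere, so that the $\mathcal{Z}$-set homotopy $H$ fires exactly where it is needed to push off of $Z$.

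To set this up, choose any $x_0 \in X$; since $\widehat{X}$ is an AR and the singleton $\{x_0\}$ is trivially an AR, Fact \ref{sdr} supplies $G: \widehat{X} \times I \to \widehat{X}$ with $G_0 = \text{id}_{\widehat{X}}$, $G_1 \equiv x_0$, and $G(x_0, t) = x_0$ for all $t$. The $\mathcal{Z}$-compactification hypothesis supplies a $\mathcal{Z}$-set homotopy $H: \widehat{X} \times I \to \widehat{X}$ with $H_0 = \text{id}$ and $H_s(\widehat{X}) \cap Z = \emptyset$ for every $s > 0$. Letting $A := (\widehat{X} \times \{0, 1\}) \cup (\{x_0\} \times I)$, which is closed in $\widehat{X} \times I$, I would set
\[
f(x, t) := \min\bigl\{t,\, d\bigl((x, t), A\bigr)\bigr\},
\]
a continuous $[0,1]$-valued function that vanishes precisely on $A$.

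Verification of the four required properties then reduces to the following observations. On the faces $\widehat{X} \times \{0, 1\}$ and on $\{x_0\} \times I$, the warp $f$ is zero, so $\widehat{F}$ collapses to $G$, yielding $\widehat{F}_0 = \text{id}$, $\widehat{F}_1 \equiv x_0$, and $\widehat{F}(x_0, t) = x_0$. For the remaining condition $\widehat{F}_t(\widehat{X}) \cap Z = \emptyset$ at $t > 0$, note that the only way $\widehat{F}(x, t)$ can lie in $Z$ is if $G(x, t) \in Z$ and $f(x, t) = 0$; but $f(x, t) = 0$ together with $t > 0$ forces $(x, t) \in A$, which forces $x = x_0$ or $t = 1$, either of which yields $G(x, t) = x_0 \in X$, a contradiction. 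The main structural obstacle is that an arbitrary strong deformation retraction $G$ onto $\{x_0\}$ can itself pass through $Z$ at intermediate times, so one cannot take $\widehat{F} = G$; the whole purpose of the warp $f$ is to switch $H$ on at precisely those times while simultaneously forcing $H$ off on the prescribed boundary data, and the hinge of the argument is the compatibility check that the ``bad'' set $\{(x, t) : G(x, t) \in Z\}$ is disjoint from $A$ off of $\widehat{X} \times \{0\}$.
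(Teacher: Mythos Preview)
Your argument is correct, and it takes a genuinely different route from the paper's. The paper first asserts (without detailed justification) that one may take a single homotopy $F$ which is simultaneously a $\mathcal{Z}$-set homotopy \emph{and} a contraction to some $x_0\in X$; the only defect is that $F$ need not fix $x_0$. It then repairs this locally: choosing an open $U\supseteq F(\{x_0\}\times I)$ in $X$, it defines a partial map on the closed set $Q=\bigl((X\setminus U)\times I\bigr)\cup(\{x_0\}\times I)\cup(X\times\{1\})$ (agreeing with $F$ away from $U$, constant $x_0$ on the other pieces), extends over $X\times I$ using that $X$ is an AR, and glues with $F$ on $\partial X\times I$. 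So the paper's key move is an AR extension argument applied directly on $X\times I$.

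Your construction instead keeps the two ingredients separate: a strong deformation retraction $G$ onto $\{x_0\}$ (already handed to you by Fact~\ref{sdr}) and the raw $\mathcal{Z}$-set homotopy $H$, combined by the explicit formula $H(G(x,t),f(x,t))$. This avoids any ad hoc extension step and makes the ``combine contraction with $\mathcal{Z}$-push'' maneuver completely transparent; the paper's approach, by contrast, modifies only near the track of $x_0$ and leaves $F$ untouched elsewhere. A minor remark: since $\widehat{X}\times\{0\}\subseteq A$ and any reasonable metric on $\widehat{X}\times I$ gives $d((x,t),(x,0))\le t$ up to a constant, the outer $\min\{t,\cdot\}$ in your definition of $f$ is essentially redundant, though harmless.
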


\begin{proof}  The proof is analogous to that of Fact \ref{sdr}:
\Par
Since $\widehat{X}$ is a $\mathcal{Z}$-compactification of $X$, there is a homotopy $F:\widehat{X}\times[0,1]\rightarrow\widehat{X}$ satisfying $F_0\equiv \text{id}_{\widehat{X}}$ and $F_t(\widehat{X})\cap\partial X=\emptyset$ whenever $t>0$.  Moreover, since $\widehat{X}$ is contractible, then we may assume that $F$ is a contraction to some base point $x_0\in X$.
\Par
Now $F(\left\{x_0\right\}\times[0,1])\cap\partial X=\emptyset$, so we may choose an open neighborhood $U$ of $F(\left\{x_0\right\}\times[0,1])$ in $X$.
\Par
Define $Q:=\left((X\backslash U)\times[0,1]\right)\cup\left(\left\{x_0\right\}\times[0,1]\right)\cup\left(X\times\left\{1\right\}\right)$, and $H:Q\rightarrow X$ by

\begin{center}\begin{math}H(x,t):=\left\{\begin{array}{l l}F(x,t) & \text{ if } (x,t)\in (X\backslash U)\times[0,1]
\\
x_0 & \text{ if } x=x_0, t\in[0,1]
\\
x_0 & \text{ if } x\in X, t=1\end{array}\right.\end{math}\end{center}

Since $X$ is an AR and $Q$ is closed in $X\times [0,1]$, then $H$ extends to $\widehat{H}:X\times [0,1]\rightarrow X$.  Now the map $\widehat{F}:\widehat{X}\times[0,1]\rightarrow \widehat{X}$ defined by
\Par
\begin{center}\begin{math} \widehat{F}(x,t):=\left\{\begin{array}{l l} F(x,t) & \text{ if }x\in V=\partial X, t\in[0,1]
\\
\widehat{H}(x,t) & \text{ if } x\in X, t\in[0,1]\end{array}\right.\end{math}\end{center}
\Par
has the desired properties.\end{proof}

\begin{lemma}\label{slowedsdr}
Suppose $\widehat{X}$ is an AR which is a $\mathcal{Z}$-compactification of $X$, $\left\{C_i\right\}_{i=1}^{\infty}$ is an exhaustion of $X$ by compact sets satisfying $\overline{C_i}\subseteq\text{int} (C_{i+1})$ for all $i\in\mathbb{N}$, and $\left\{t_i\right\}_{i=1}^{\infty}\subseteq (0,1)$ satisfies $t_i>t_{i+1}$ for all $i\in\mathbb{N}$.  Then there is a $\mathcal{Z}$-set homotopy $F:\widehat{X}\times[0,1]\rightarrow\widehat{X}$ which is a strong deformation retraction of $\widehat{X}$ to a base point $x_0\in X$ and having the additional property that 
\begin{equation*}  F(x,t)=x\text{ whenever } (x,t)\in\bigcup_{i=1}^{\infty}\left(C_i\times[0,t_i]\right)\end{equation*}\end{lemma}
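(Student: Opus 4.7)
The idea is to reparameterize the homotopy from Lemma~\ref{sdrzcompactification}, slowing it down more and more as one approaches $\partial X$ so that each $C_i$ is held fixed up to time $t_i$. (This implicitly requires $t_i \to 0$: otherwise $F_{t_*}$ would have to fix $X$ pointwise while simultaneously pushing $\partial X$ into $X$, violating continuity. I will take this tacit hypothesis on board.)

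Step~1 is to invoke Lemma~\ref{sdrzcompactification} to obtain a homotopy $\widehat{F}:\widehat{X}\times[0,1]\to\widehat{X}$ that is a strong deformation retraction to a base point $x_0\in X$, misses $\partial X$ at every positive time, and fixes $x_0$ throughout.

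Step~2 is the main technical step: construct a continuous ``delay'' function $\sigma:\widehat{X}\to[0,1)$ with $\sigma|_{\partial X}\equiv 0$ and $\sigma(x)\geq t_i$ for all $x\in C_i$. Let $d(x)=\text{dist}(x,\partial X)$ in the compact metric space $\widehat{X}$; then $d_i:=\min\{d(x):x\in C_i\}>0$ (as $C_i$ is compact and disjoint from $\partial X$) and $d_i\downarrow 0$ since the sets $C_i$ exhaust $X$ and $X$ accumulates on $\partial X$. Choose a continuous nondecreasing $g:[0,\infty)\to[0,1)$ with $g(0)=0$ and $g(d_i)\geq t_i$ for every $i$; a piecewise linear function interpolating the data points $(d_i,t_i)$ works, and continuity at $0$ uses precisely that both $d_i\downarrow 0$ and $t_i\downarrow 0$. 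Set $\sigma:=g\circ d$.

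Step~3: define
\[
F(x,t):=\widehat{F}\bigl(x,\tau(x,t)\bigr),\quad\text{where}\quad \tau(x,t):=\max\Bigl\{0,\tfrac{t-\sigma(x)}{1-\sigma(x)}\Bigr\}.
\]
Since $\sigma<1$ everywhere, $\tau$ is continuous on $\widehat{X}\times[0,1]$; clearly $\tau(\cdot,0)\equiv 0$, $\tau(\cdot,1)\equiv 1$, and $\tau(x,t)=0$ exactly when $t\leq\sigma(x)$. From these I read off at once that $F_0\equiv\text{id}_{\widehat{X}}$, $F_1\equiv x_0$, and $F(x_0,t)\equiv x_0$ (because $\widehat{F}$ fixes $x_0$ at all times). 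On each $C_i\times[0,t_i]$ one has $\sigma(x)\geq t_i\geq t$, hence $\tau(x,t)=0$ and $F(x,t)=x$, as required. Finally, for the $\mathcal{Z}$-set property at time $t>0$: if $x\in\partial X$ then $\sigma(x)=0$, so $\tau(x,t)=t>0$ and $\widehat{F}(x,\tau(x,t))\in X$; if $x\in X$ then $F(x,t)$ is either $x\in X$ (when $\tau=0$) or $\widehat{F}(x,\tau)\in X$ (when $\tau>0$).

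The main obstacle is Step~2: engineering $\sigma$ so that it simultaneously vanishes on $\partial X$, dominates each $t_i$ on $C_i$, stays strictly below $1$, and is continuous. This is where the hypothesis $t_i\to 0$ enters and what dictates the distance-to-boundary construction; once $\sigma$ is in hand, the reparameterization in Step~3 is formal.
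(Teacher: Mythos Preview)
Your proof is correct and follows the same overall strategy as the paper: take the homotopy $\widehat{F}$ from Lemma~\ref{sdrzcompactification} and precompose in the time variable with a continuous reparameterization $\widehat{X}\times[0,1]\to[0,1]$ that vanishes on $\bigcup_i C_i\times[0,t_i]$ and equals $t$ along $\partial X\times[0,1]$. The only difference is in how that reparameterization is produced. The paper defines it on the closed set $A=(\widehat{X}\times\{0,1\})\cup(\partial X\times[0,1])\cup\bigcup_i(C_i\times[0,t_i])$ and appeals to the AR property of $[0,1]$ (i.e.\ Tietze) to extend; you instead build it explicitly as $\tau(x,t)=\max\{0,(t-\sigma(x))/(1-\sigma(x))\}$ with $\sigma=g\circ\mathrm{dist}(\cdot,\partial X)$. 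Your observation that the hypothesis $t_i\to 0$ is tacitly required is on point: the paper's argument needs it too, since otherwise $A$ fails to be closed (its closure meets $\partial X\times(0,t_*]$, where the two prescribed values of $f$ disagree).
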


\begin{proof}  Let $\overline{F}:\widehat{X}\times[0,1]\rightarrow\widehat{X}$ be a $\mathcal{Z}$-set homotopy which is a strong deformation retraction of $\widehat{X}$ to $x_0\in X$, as in Lemma \ref{sdrzcompactification}.
\Par
Let $A:=\left(\widehat{X}\times\left\{0,1\right\}\right)\bigcup\Bigl(\partial X\times[0,1]\Bigr)\bigcup\Bigl(\bigcup_{i=1}^{\infty}\left(C_i\times[0,t_i]\right)\Bigr)$ and define $f:A\rightarrow[0,1]$ by
\begin{equation*}f(x,t)=\left\{\begin{array}{l l}0 & \text{if }(x,t)\in\widehat{X}\times\left\{0\right\}\bigcup\Bigl(\bigcup_{i=1}^{\infty}\left(C_i\times[0,t_i]\right)\Bigr)
\\
t & \text{if } (x,t)\in\left(\widehat{X}\times\left\{1\right\}\right)\bigcup\left(\partial X\times[0,1]\right)\end{array}\right.\end{equation*}\

Now, since $A$ is a closed subset of $\widehat{X}\times[0,1]$, then $f$ extends to  $f:\widehat{X}\times[0,1]\rightarrow[0,1]$. 
\Par
Then $F:\widehat{X}\times[0,1]\rightarrow\widehat{X}$ defined by $F(x,t):=\overline{F}(x,f(x,t))$ for all $(x,t)\in\widehat{X}\times[0,1]$ has the required attributes.\end{proof}

\begin{definition}  The action of a group $G$ on a space $X$ is \textbf{proper} if every point $x\in X$ has a neighborhood $U$ satisfying $g(U)\cap U=\emptyset$ for all but finitely many $g\in G$.\end{definition}

\begin{definition}  The action of $G$ on $X$ is \textbf{cocompact} if there is a compactum $K$ in $X$ so that $\bigcup\limits_{g\in G} gK=X$.\end{definition}

\begin{definition}  Suppose $G$ is a group acting properly and cocompactly on $X$, and $\widehat{X}$ is a $\mathcal{Z}$-compactification of $X$.  We say that $\widehat{X}$ satisfies the \textbf{null condition with respect to the action of $G$ on $X$} if the following condition holds:
\\
\tab For any compactum $C$ in $X$ and any open cover $\mathcal{U}$ of $\widehat{X}$, there is a finite subset $\Gamma$ of $G$ so that if $g\in G\backslash \Gamma$, then $gC$ is contained in a single element of $\mathcal{U}$.\end{definition}

\begin{definition}  (Bestvina, \cite{bestvina96}) \label{zstructure}Let $G$ be a group.  A \textbf{$\mathcal{Z}$-structure} on $G$ is a pair $(\widehat{X},Z)$ of spaces such that:
\\
\tab (1)  $\widehat{X}$ is a compact ER.
\\
\tab (2)  $\widehat{X}$ is a $\mathcal{Z}$-compactification of $X:=\widehat{X}\backslash Z$.
\\
\tab (3)  $G$ acts properly and cocompactly on $X:=\widehat{X}\backslash Z$.
\\
\tab (4)  $\widehat{X}$ satisfies the null condition with respect to the action of $G$ on $X$.\end{definition}

\begin{remark} Note that if $G$ admits a $\mathcal{Z}$-structure $(\widehat{X},\partial X)$, then $G$ acts on the contractible, finite-dimensional ANR $X$.
\end{remark}
In \cite{bestvina96}, Bestvina discusses the possibility of requiring that the $G$-action on $X$ extend to an action on $\widehat{X}$.  This variation on the notion of $\mathcal{Z}$-structure was formalized by Farrell and LaFont in the following way:

\begin{definition}  (Farrell-LaFont, \cite{farrelllafont})  The pair $(\widehat{X},Z)$ is an \textbf{$\mathcal{EZ}$-structure} on the group $G$ if $(\widehat{X},Z)$ is a $\mathcal{Z}$-structure on $G$, and the action of $G$ on $X:=\widehat{X}\backslash Z$ extends to an action on $\widehat{X}$.\end{definition}

In general, the conditions from Definition \ref{zstructure} which are most difficult to verify when showing the existence of a $\mathcal{Z}$-structure are (1) and (2).  When proving the theorems in this paper, we found the following to be helpful:

\begin{definition}  For an open cover $\mathcal{U}=\left\{U_{\alpha}\right\}_{\alpha\in A}$ of a space $Z$, we say that a homotopy $H:Z\times [0,1]\rightarrow Z$ is a \textbf{$\mathcal{U}$-homotopy} if for each $z\in Z$, there is an $\alpha\in A$ such that $H\left(\left\{z\right\}\times[0,1]\right)\subseteq U_{\alpha}$.\end{definition}

Similarly, if $(Z,d)$ is a metric space, then we say that $H:Z\times[0,1]\rightarrow Z$ is an \textbf{$\epsilon$-homotopy} if for each $z\in Z$, $\text{diam}_d\left(H(\left\{z\right\}\times[0,1])\right)<\epsilon$.

\begin{definition}  A space $W$ \textbf{$\mathcal{U}$-dominates} [respectively, \textbf{$\epsilon$-dominates}] the space $Z$ if there exist maps $\phi:W\rightarrow Z$ and $\psi:Z\rightarrow W$ such that the composition $\phi\circ\psi:Z\rightarrow Z$ is $\mathcal{U}$-homotopic [respectively, $\epsilon$-homotopic] to $\text{id}_Z$.\end{definition}

\begin{theorem}  \label{hanner}(Hanner \cite{hanner})  Each of the following conditions is a sufficient condition for a space $X$ to be an ANR:
\Par
\tab (a)  For each covering $\mathcal{U}$ of $X$ there is an ANR which $\mathcal{U}$-dominates $X$.
\Par
\tab (b)  For some metric on $X$ there exists for each $\epsilon>0$ an ANR which $\epsilon$-dominates $X$.
\end{theorem}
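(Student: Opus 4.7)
The plan is to prove (a) directly and then deduce (b) from (a).

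For (a), I would first embed $X$ as a closed subset of an AR $Y$, for example via a Kuratowski-type embedding into a convex subset of a normed linear space. By the definition of an ANR, it then suffices to produce a retraction from some open neighborhood of $X$ in $Y$ onto $X$. Fix an open cover $\mathcal{U}$ of $X$; by hypothesis there is an ANR $W$ together with maps $\phi : W \to X$ and $\psi : X \to W$ and a $\mathcal{U}$-homotopy $H$ from $\phi \circ \psi$ to $\text{id}_X$. The ANR analogue of Fact \ref{ARextend} lets me extend $\psi$ to a map $\tilde\psi : V \to W$ on an open neighborhood $V$ of $X$ in $Y$, and the composite $r := \phi \circ \tilde\psi : V \to X$ restricts on $X$ to $\phi \circ \psi$, which by $H$ is $\mathcal{U}$-close to $\text{id}_X$.

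The hard part is to upgrade this approximate retraction $r$ to a genuine retraction on some smaller neighborhood $V' \subseteq V$ of $X$. The standard remedy is to run the above construction for a sequence of open covers $\mathcal{U}_1, \mathcal{U}_2, \ldots$ of $X$ whose meshes (in a fixed metric on $X$) shrink to zero, producing approximate retractions $r_n$ and accompanying $\mathcal{U}_n$-homotopies $H_n$ between $r_n|_X$ and $\text{id}_X$. Using the convex structure of the ambient AR $Y$, one successively modifies $r_n$ by concatenating the $H_n$'s so that consecutive $r_n$ and $r_{n+1}$ agree on progressively larger neighborhoods of $X$. The shrinking-mesh condition forces the modified sequence to be uniformly Cauchy on $V'$, and its continuous limit is a retraction of $V'$ onto $X$. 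This telescoping step is where the technical work really lies, and is the principal obstacle in the proof.

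For (b), let $\mathcal{U}$ be any open cover of the metric space $X$. A standard paracompactness argument produces a continuous positive function $\delta : X \to (0,\infty)$ such that every ball $B_d(x,\delta(x))$ lies in some element of $\mathcal{U}$. Since (b) supplies $\epsilon$-dominating ANRs for every $\epsilon > 0$, one chooses $\epsilon$ small enough (using $\delta$ locally and patching by paracompactness if necessary) that every $\epsilon$-homotopy is automatically a $\mathcal{U}$-homotopy. The ANR witnessing $\epsilon$-domination then $\mathcal{U}$-dominates $X$, and (a) applies to conclude that $X$ is an ANR.
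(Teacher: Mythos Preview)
The paper does not supply its own proof of this theorem; it is quoted as a result of Hanner and used as a black box (see the citation \cite{hanner} in the statement). So there is nothing in the paper to compare your argument against.

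That said, a brief comment on your proposal. Your outline for (a) is the standard strategy and is essentially correct in spirit: embed $X$ in an AR, use each $\mathcal{U}$-domination to manufacture an approximate retraction, and then telescope a sequence of such approximate retractions (for covers of shrinking mesh) into an honest retraction of a neighborhood onto $X$. You correctly flag the telescoping step as the place where the real work hides.

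Your reduction of (b) to (a), however, has a genuine gap. Given an arbitrary open cover $\mathcal{U}$ of a (locally compact, separable) metric space $X$, there is in general no single $\epsilon>0$ for which every $\epsilon$-homotopy is a $\mathcal{U}$-homotopy; that would require a Lebesgue number, which you only get for free when $X$ is compact. Your suggestion to ``patch by paracompactness'' does not rescue this: the hypothesis (b) hands you, for each $\epsilon$, a \emph{single} ANR $W_\epsilon$ and maps $\phi_\epsilon,\psi_\epsilon$ globally defined on $X$, and there is no evident way to splice the dominations $W_{\epsilon_1},W_{\epsilon_2},\ldots$ over different pieces of $X$ into one ANR that $\mathcal{U}$-dominates $X$. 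The usual fix is not to reduce (b) to (a) at all, but to run the same telescoping construction directly under hypothesis (b): choose $\epsilon_n\to 0$, obtain approximate retractions $r_n$ that are $\epsilon_n$-close to the identity on $X$, and assemble them into a retraction exactly as in your argument for (a). In other words, (a) and (b) are parallel hypotheses feeding the same limiting construction, rather than (b) being a corollary of (a).
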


\begin{definition}  A map $f:X\rightarrow Y$ between metric spaces $(X,d)$ and $(Y,d')$ is an \textbf{$\epsilon$-mapping} if $\text{diam}_d f^{-1}(\left\{y\right\})<\epsilon$ for every $y\in Y$.\end{definition}

\begin{theorem}(See p. 107 of \cite{engelking}) \label{engelking}If $X$ is a compact metric space and for every $\epsilon>0$ there exists an $\epsilon$-mapping $f:X\rightarrow Y$ of $X$ to a compact space $Y$ such that $\dim Y\leq n$, then $\dim X\leq n$.\end{theorem}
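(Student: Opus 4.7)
The plan is to prove $\dim X \leq n$ by producing, for each $\epsilon>0$, a finite open cover of $X$ of mesh less than $\epsilon$ and order at most $n+1$; by the standard characterization of covering dimension for compact metric spaces, this suffices. The strategy is to pull back a well-chosen open cover of $Y$ through the given $\epsilon$-mapping.

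Fix $\epsilon>0$ and let $f:X\to Y$ be an $\epsilon$-mapping to a compact space $Y$ with $\dim Y\leq n$. The key intermediate step is to show that for every $y\in Y$ there is an open neighborhood $V_y$ of $y$ in $Y$ such that $\text{diam}(f^{-1}(V_y))<\epsilon$. To construct $V_y$, I would start from the fact that $f^{-1}(y)$ is compact with diameter less than $\epsilon$, enclose it in an open set $U_y\subseteq X$ of diameter still less than $\epsilon$, and then exploit that $f$ is a closed map (since $X$ is compact and $Y$ is Hausdorff): the set $f(X\setminus U_y)$ is closed in $Y$ and avoids $y$, so $V_y:=Y\setminus f(X\setminus U_y)$ is an open neighborhood of $y$ with $f^{-1}(V_y)\subseteq U_y$, hence of diameter less than $\epsilon$.

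With these $V_y$ in hand, $\{V_y\}_{y\in Y}$ is an open cover of $Y$. Using $\dim Y\leq n$, I would refine it to a finite open cover $\mathcal{W}$ of order at most $n+1$. Then $\{f^{-1}(W):W\in\mathcal{W}\}$ is a finite open cover of $X$ of order at most $n+1$ and mesh less than $\epsilon$, completing the argument since $\epsilon$ was arbitrary.

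I expect the main obstacle to be bridging the gap between the pointwise hypothesis (small fibers $f^{-1}(y)$) and the uniform conclusion needed to pull back a cover (small preimages $f^{-1}(V_y)$ of open neighborhoods). This gap is closed by the closed-map argument sketched above, which uses compactness of $X$ essentially; the other ingredient, namely the equivalence between admitting arbitrarily fine finite open covers of order $\leq n+1$ and having covering dimension $\leq n$, is a standard fact about compact metric spaces that I would invoke rather than reprove.
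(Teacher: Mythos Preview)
Your argument is correct and is essentially the classical proof (the one in Engelking's text that the paper cites). Note that the paper itself does not supply a proof of this theorem; it is quoted as a known result from the literature, so there is no ``paper's own proof'' to compare against.

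A couple of minor points worth making explicit when you write it up: the paper's standing convention is that all spaces are separable metric, so $Y$ is automatically Hausdorff and your closed-map argument goes through; and in Step 3 you should first pass to a finite subcover of $\{V_y\}$ by compactness before invoking $\dim Y\le n$ to refine to order $\le n+1$, since the definition of covering dimension is stated for finite open covers. With those clarifications, the proof is complete.
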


\begin{corollary}  If $X$ is a metric space with $\dim X\leq n$, and $\widehat{X}$ is a metric space which is a $\mathcal{Z}$-compactification of $X$, then $\dim \widehat{X}\leq n$.\end{corollary}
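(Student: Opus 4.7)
The plan is to apply Theorem \ref{engelking}, so I need to produce, for every $\epsilon>0$, an $\epsilon$-mapping from $\widehat{X}$ onto a compact space of dimension at most $n$. The $\mathcal{Z}$-set homotopy $H:\widehat{X}\times[0,1]\to\widehat{X}$ supplies exactly the right tool: for small $t$, $H_t$ is uniformly close to the identity and its image lies in $X$, where the dimension bound already holds.

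More concretely, given $\epsilon>0$, I would first invoke uniform continuity of $H$ on the compact metric space $\widehat{X}\times[0,1]$ to choose $t_0>0$ with $d(H(x,t_0),x)<\epsilon/2$ for every $x\in\widehat{X}$. Setting $f:=H_{t_0}$ and $Y:=f(\widehat{X})$, the $\mathcal{Z}$-set condition gives $Y\subseteq X$, and $Y$ is compact as the continuous image of a compact space. Any two points $x_1,x_2\in f^{-1}(y)$ satisfy
\begin{equation*}
d(x_1,x_2)\le d(x_1,H_{t_0}(x_1))+d(H_{t_0}(x_2),x_2)<\epsilon,
\end{equation*}
so $f\colon\widehat{X}\to Y$ is an $\epsilon$-mapping onto a compact space.

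The remaining ingredient is $\dim Y\le n$. Since $Y$ is a subspace of the separable metric space $X$ with $\dim X\le n$, this follows from the standard monotonicity of covering dimension (equivalently, of small inductive dimension) for subspaces of separable metric spaces. Then Theorem \ref{engelking} applies to give $\dim\widehat{X}\le n$.

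There is no real obstacle here; the only points that require any care are the uniform continuity estimate (which is immediate because $\widehat{X}\times[0,1]$ is compact metric) and the appeal to subspace monotonicity of dimension, both of which are standard facts about separable metric spaces and require no argument specific to the $\mathcal{Z}$-compactification setting.
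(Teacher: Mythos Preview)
Your proof is correct and follows essentially the same approach as the paper: choose $t$ small enough that $H_t$ is an $\epsilon$-mapping onto its image, note the image lies in $X$ and hence has dimension at most $n$, and apply Theorem~\ref{engelking}. You actually give more detail than the paper does, supplying the uniform continuity argument and the triangle inequality verification that the paper leaves implicit.
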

\begin{proof}  Let $\epsilon>0$ and let $H:\widehat{X}\times[0,1]$ be a $\mathcal{Z}$-set homotopy.  We may choose $t\in (0,1]$ such that the corestriction $H_t:\widehat{X}\rightarrow H_t(\widehat{X})$ is an $\epsilon$-mapping.  Since $H_t(\widehat{X})\subseteq X$ and $\dim X\leq n$, then $\dim H_t(\widehat{X})\leq n$.  Moreover, $H_t(\widehat{X})$ is compact, so Theorem \ref{engelking} applies.\end{proof}

\subsection{Examples}

While the task (posed by Bestvina) of classifying all groups which admit $\mathcal{Z}$-structures remains open, there are various classes of groups which are known to admit $\mathcal{Z}$-structures:

\begin{example}  \label{CAT(0)} A geodesic space $X$ is \textbf{CAT(0)} if geodesic triangles in $X$ are ``no fatter than'' those in the Euclidean plane.  (See Chapter II.1 in \cite{bh} for more background.)
\Par
If $X$ is a CAT(0) space, the \textbf{visual boundary} of $X$, denoted $\partial X$, is the set of geodesic rays emanating from a chosen base point $x_0$.  This boundary on $X$ is well-defined and independent of the base point.  The \textbf{cone topology} on $\overline{X}:=X\cup\partial X$ has as a basis all open balls $B(x,r)\subseteq X$ and all sets of the form $U(c,r,\epsilon)$, where, given a geodesic ray $c$ based at $x_0$ and $r,\epsilon>0$,
\begin{equation*}  U(c,r,\epsilon):=\left\{x\in X\ |\ d(x,x_0)>r,d(p_r(x),c(r))<\epsilon\right\}\cup\left\{x\in\partial X\ |\ d(p_r(x),c(r))<\epsilon\right\} \end{equation*}
where $p_r$ is the natural projection map to $\overline{B}(x_0,r)$.  These neighborhoods $U(c,r,\epsilon)$ of boundary points contain those points in $\overline{X}$ which are sufficiently far from $x_0$ (i.e. sufficiently close to $\partial X$) and which emanate from $x_0$ at the appropriate ``angle.''
\Par
A metric space $(X,d)$ is \textbf{proper} if every closed metric ball in $X$ is compact.
\Par
A group $G$ is \textbf{CAT(0)} if $G$ acts properly and cocompactly by isometries on a proper CAT(0) space.

\begin{fact} If $G$ is a CAT(0) group acting properly and cocompactly by isometries on the proper CAT(0) space $X$, then $(\overline{X},\partial X)$ is a $\mathcal{Z}$-structure on $G$.\end{fact}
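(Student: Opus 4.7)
The plan is to check the four conditions of Definition \ref{zstructure} separately. Condition (3) is part of the hypothesis, so only (1), (2), and (4) require proof. For conditions (1) and (2), the central tool is the standard geodesic contraction of $\overline{X}$ onto the basepoint $x_0 \in X$. Define $H \colon \overline{X} \times [0,1] \to \overline{X}$ as follows: for $y \in X$, let $H(y,t)$ be the point at distance $(1-t)\,d(x_0,y)$ from $x_0$ along the unique geodesic from $x_0$ to $y$; for $y \in \partial X$, which by definition is a geodesic ray based at $x_0$, set $H(y,0) = y$ and $H(y,t) = y(\phi(t))$ for $t > 0$, where $\phi \colon (0,1] \to [0,\infty)$ is any homeomorphism with $\phi(t) \to \infty$ as $t \to 0^+$ and $\phi(1) = 0$. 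Using the basic neighborhoods $U(c, r, \epsilon)$ of the cone topology together with properness of $X$, one checks that $H$ is jointly continuous. Since $H_t(\overline{X}) \subseteq X$ for all $t > 0$, this simultaneously proves $\overline{X}$ is contractible and that $\partial X$ is a $\mathcal{Z}$-set in $\overline{X}$, establishing (2).

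To finish condition (1), it remains to verify that $\overline{X}$ is a compact, finite-dimensional ER. Compactness of $\overline{X}$ in the cone topology is standard for proper CAT(0) spaces. Finite-dimensionality of $X$ follows from the existence of a compact fundamental domain for the cocompact action together with local compactness, and the corollary to Theorem \ref{engelking} transfers the bound to $\overline{X}$. The step that requires real work is showing $\overline{X}$ is an ANR. I would apply Theorem \ref{hanner}: given an open cover $\mathcal{U}$ of $\overline{X}$, refine to a finite open cover consisting of metric balls in $X$ and basic neighborhoods $U(c, r, \epsilon)$ at the boundary, form its nerve $N$, and construct maps $\psi \colon \overline{X} \to N$ (via a partition of unity) and $\phi \colon N \to \overline{X}$ (built simplex-wise using the retraction $H$ to glue together barycentric combinations) whose composition is $\mathcal{U}$-homotopic to $\mathrm{id}_{\overline{X}}$. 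Since $N$ is a finite polyhedron, hence an ANR, Hanner's theorem gives the ANR property, and Fact \ref{ARER} yields the ER conclusion.

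For the null condition (4), let $C \subset X$ be compact and $\mathcal{U}$ an open cover of $\overline{X}$. Fix any compatible metric on $\overline{X}$ and choose a Lebesgue number $\epsilon > 0$ for $\mathcal{U}$; it suffices to show all but finitely many translates $gC$ have diameter less than $\epsilon$. Properness of the action ensures that for any fixed $R > 0$, only finitely many $g \in G$ satisfy $gC \cap \overline{B}(x_0, R) \neq \emptyset$, so for the cofinite remainder $gC$ lies outside $\overline{B}(x_0, R)$. The CAT(0) comparison inequality, applied to triangles $\triangle(x_0, gc_1, gc_2)$ with $c_1, c_2 \in C$, forces the visual angle of $gC$ at $x_0$ to tend to $0$ as $d(x_0, gC) \to \infty$, which by the definition of the cone topology translates into $\mathrm{diam}(gC) < \epsilon$ once $R$ is large enough. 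The main obstacle in the whole argument is the ANR verification for $\overline{X}$ sketched above; the null condition is geometrically intuitive, but requires care in converting CAT(0) angle estimates into diameter estimates in the cone topology.
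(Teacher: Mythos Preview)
Your treatment of conditions (2) and (4) matches the paper's sketch exactly: the geodesic retraction toward $x_0$ gives the $\mathcal{Z}$-set homotopy, and the CAT(0) comparison inequality drives the null condition. The paper says essentially nothing about condition (1) beyond implicitly treating it as known, so in attempting to verify that $\overline{X}$ is a compact ER you are going well beyond what the paper actually argues.

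There is, however, a genuine gap in your verification of (1). Your claim that ``finite-dimensionality of $X$ follows from the existence of a compact fundamental domain for the cocompact action together with local compactness'' is not a valid argument: the Hilbert cube $\prod_{n\ge 1}[0,2^{-n}]$ with the $\ell^2$ metric is a compact (hence proper) CAT(0) space on which the trivial group acts properly and cocompactly, yet it is infinite-dimensional and therefore not an ER. So either an explicit finite-dimensionality hypothesis on $X$ is needed, or one must invoke a nontrivial theorem guaranteeing it; neither ``compact fundamental domain'' nor ``local compactness'' suffices. This is a real subtlety that the paper's brief sketch also glosses over.

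Once finite-dimensionality is granted, your nerve argument for the ANR property is more work than necessary. A cleaner route, in the spirit of the paper's later proofs, is this: $X$ is locally contractible (geodesic retraction onto small balls) and finite-dimensional, hence an ANR by the finite-dimensional analogue of Fact~\ref{ARER}. Now the very homotopy $H$ you built shows that $X$ $\mathcal{U}$-dominates $\overline{X}$ for every open cover $\mathcal{U}$: take $\psi = H_t$ for $t$ small enough that all tracks lie in elements of $\mathcal{U}$, and $\phi$ the inclusion. Theorem~\ref{hanner} then gives that $\overline{X}$ is an ANR directly, with no nerve construction and no need to manufacture barycentric maps back from a simplicial complex.
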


It is easy to see that $\overline{X}$ is a $\mathcal{Z}$-compactification of $X$; $\overline{X}$ can be pulled off of $\partial X$ via a homotopy which runs all the geodesic rays in reverse.
\Par
The following statement follows easily from the CAT(0)-inequality and the fact that $G$ acts isometrically on $X$:
\Par
\tab Given a compact $C\subseteq X$, $r>0$, and $\epsilon>0$, there is a number $R>0$ such that if $gC\cap B(x_0,R)=\emptyset$, then there is some $c\in\partial X$ such that $gC\subseteq U(c,r,\epsilon)$.
\Par
This fact, along with properness of the action of $G$ on $X$, imply that $\overline{X}$ satisfies the null condition with respect to the action of $G$ on $X$.
\Par
In addition, the action of $G$ on $X$ extends naturally to $\partial X$, giving:

\begin{fact}  If $G$ is a CAT(0) group, then $G$ admits an $\mathcal{EZ}$-structure.\end{fact}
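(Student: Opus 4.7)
The plan is to combine the previous Fact with the standard extension of isometries to the visual boundary. The preceding Fact already establishes that $(\overline{X},\partial X)$ is a $\mathcal{Z}$-structure on $G$, so the only remaining task — the ``$\mathcal{E}$'' part — is to exhibit a continuous action of $G$ on the whole compactum $\overline{X}$ that restricts to the given proper cocompact action on $X$.

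First I would extend the action set-theoretically. Because each $g\in G$ is an isometry of $X$, it carries geodesic rays to geodesic rays and preserves the asymptotic equivalence relation (two rays are asymptotic iff they remain at bounded distance, a condition manifestly preserved by any isometry). Identifying $\partial X$ with asymptotic equivalence classes of geodesic rays, this yields a natural bijection $g\colon\partial X\to\partial X$ given by $[c]\mapsto [g\circ c]$; equivalently, $g\cdot[c]$ is the unique ray issuing from $x_0$ asymptotic to $g\circ c$. Pasting this together with the given action on $X$ produces a set-theoretic action of $G$ on $\overline{X}=X\cup\partial X$, and since group composition agrees on $X$ under both descriptions, the relation $(gh)\cdot\xi=g\cdot(h\cdot\xi)$ is then automatic on $\partial X$ as well.

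Next I would check that each $g$ acts as a homeomorphism of $\overline{X}$ with the cone topology. Since $g$ is an isometry of $X$, it permutes the metric balls $B(x,r)$, so the first type of basic open set is handled trivially. For a basic neighborhood $U(c,r,\epsilon)$ of a boundary point, the image $g\cdot U(c,r,\epsilon)$ is described in terms of the translated ray $g\circ c$, which is based at $g\cdot x_0$ rather than at $x_0$. Using the fact (standard in CAT(0) geometry, see \cite{bh}) that the cone topology on $\overline{X}$ is independent of the chosen base point, together with the isometry invariance of the projection $p_r$, one rewrites $g\cdot U(c,r,\epsilon)$ as a union of sets of the form $U(c',r',\epsilon')$ based at $x_0$; applying the same reasoning to $g^{-1}$ then shows that $g$ is a homeomorphism.

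The main obstacle, though routine, is precisely this continuity verification at boundary points, where the base point $x_0$ is moved by $g$: once one invokes base-point independence of the cone topology, everything becomes a straightforward translation between neighborhoods based at $x_0$ and at $g\cdot x_0$, and the proof is complete.
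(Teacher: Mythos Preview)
Your proposal is correct and follows exactly the paper's approach: the paper simply asserts in one line that ``the action of $G$ on $X$ extends naturally to $\partial X$'' and deduces the Fact from the preceding $\mathcal{Z}$-structure Fact, so your argument is a faithful (and more detailed) elaboration of what the paper leaves implicit.
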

\end{example}

\begin{example}  (See \cite{bh} for a more thorough treatment.)  A geodesic metric space $(X,d)$ is \textbf{$\delta$-hyperbolic} (where $\delta\geq 0$) if for any triangle with geodesic sides in $X$, each side of the triangle is contained in the $\delta$-neighborhood of the union of the other two sides.
\Par
A group $G$ is \textbf{hyperbolic} if its Cayley graph is $\delta$-hyperbolic for some $\delta\geq 0$.

\begin{theorem}  (Bestvina-Mess \cite{beme})  If $G$ is a torsion-free hyperbolic group, then $G$ admits a $\mathcal{Z}$-structure.\end{theorem}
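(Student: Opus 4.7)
The plan is to follow the Bestvina--Mess strategy by taking a sufficiently thickened Cayley graph as the space $X$ and using the Gromov boundary as $Z$. More precisely, fix a finite generating set for $G$, form the Cayley graph $\Gamma$, and take $X$ to be the Rips complex $P_d(G)$ for some $d$ large compared to the hyperbolicity constant $\delta$ (say $d \geq 16\delta$). Standard hyperbolic-group arguments show that $P_d(G)$ is a finite-dimensional, contractible, locally finite simplicial complex on which $G$ acts properly, freely (since $G$ is torsion-free), cocompactly, and simplicially. By Fact \ref{ARER}, $X = P_d(G)$ is an ER.

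Next, I would define $Z = \partial G$ as the set of equivalence classes of geodesic rays in $\Gamma$ (or equivalently, sequences of group elements converging at infinity in the Gromov product sense), and topologize $\widehat{X} := X \cup \partial G$ as follows: a neighborhood basis at a boundary point $\xi$ consists of ``shadows'' of the form $\{y \in \widehat{X} : (y \cdot \xi)_{e} > n\}$, using the extension of the Gromov product to $\widehat{X}$. One first verifies, by standard hyperbolic-geometry estimates, that $\widehat{X}$ is compact, metrizable, and finite dimensional (this last point uses the fact that $\partial G$ has finite topological dimension, bounded in terms of $\dim X$ and $\delta$; for the $\mathcal{Z}$-compactification part one can also invoke the corollary following Theorem \ref{engelking}).

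The central task is then to show that $\widehat{X}$ is an ANR and that $\partial G$ is a $\mathcal{Z}$-set in it; together with contractibility of $X$, Fact \ref{ARER} will promote this to the desired compact ER. For the ANR property, I would apply Hanner's criterion (Theorem \ref{hanner}(b)): given $\epsilon > 0$, construct an $\epsilon$-domination of $\widehat{X}$ by a finite subcomplex $K \subseteq X$. The map $\psi: \widehat{X} \to K$ is built by a partition-of-unity argument: points of $X$ deep inside are fixed, points far from the basepoint are sent to a nearby vertex of $K$ (chosen using the coarse geometry of a geodesic segment/ray), and similarly for boundary points, using that all such geodesics through a deep annulus fellow-travel on their initial segments by $\delta$-hyperbolicity. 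The corresponding $\epsilon$-homotopy is obtained by interpolating linearly inside simplices, and the fellow-traveller property of hyperbolic geodesics is what makes the diameters shrink uniformly. The $\mathcal{Z}$-set homotopy is then produced by flowing $\widehat{X}$ inward along (approximate) geodesics from a chosen basepoint $x_0 \in X$: at time $t$, each point is moved distance $\sim -\log(t)$ toward $x_0$ along a geodesic realizing (or approximating) its distance from $x_0$, with boundary points being pulled in from infinity.

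Finally, the null condition follows from a standard fact in hyperbolic geometry which parallels the CAT(0) case of Example \ref{CAT(0)}: for any compact $C \subseteq X$, any basepoint neighborhood, and any $\epsilon > 0$, there is $R > 0$ so that if $gC$ lies outside the ball of radius $R$ about $x_0$, then $gC$ is contained in a shadow neighborhood of some $\xi \in \partial G$ of radius $\epsilon$; together with properness of the action, this gives a finite exceptional set $\Gamma \subseteq G$ for any given open cover. The main obstacle I anticipate is the ANR verification --- specifically, producing the $\epsilon$-dominating subcomplexes with uniform control --- since this is exactly where the delicate interplay between the simplicial structure of $P_d(G)$ and the coarse geometry of $\partial G$ enters, and it is the heart of the Bestvina--Mess argument.
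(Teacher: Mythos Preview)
Your proposal is correct and follows the same approach the paper indicates: the paper does not give its own proof of this theorem but simply cites Bestvina--Mess, noting in one sentence that ``the proof in \cite{beme} takes as $X$ an appropriately chosen Rips complex of $G$ and as $\partial X$ the Gromov boundary of $X$.'' Your sketch fleshes out exactly this strategy, so there is nothing to compare.
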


The proof in \cite{beme} takes as $X$ an appropriately chosen Rips complex of $G$ and as $\partial X$ the Gromov boundary of $X$.
\end{example}

\begin{example} Systolic groups are groups which act simplicially and cocompactly on simplicial complexes which satisfy a combinatorial version of nonpositive curvature.  In \cite{osajda}, D. Osajda and P. Przytycki show that every systolic group admits an $\mathcal{EZ}$-structure.\end{example}

\begin{example}  Bestvina constructs in \cite{bestvina96} multiple $\mathcal{Z}$-structures on the Baumslag-Solitar group $BS(1,2)=\left\langle x,t\ |\ t^{-1}xt=x^2\right\rangle$, one of which is clearly not an $\mathcal{EZ}$-structure.  It is known that this group is not CAT(0), hyperbolic, or systolic.\end{example}

\subsection{Statement of Main Results}

In this paper, we will prove that, if groups $G$ and $H$ each admit $\mathcal{Z}$-structures, then so do their free and direct products:
\Par
\textbf{Theorem \ref{zstructurethmfreeprod}}  \textit{If both $G$ and $H$ admit $\mathcal{Z}$-structures, then so does $G\ast H$.}
\Par
The proof of Theorem \ref{zstructurethmfreeprod} involves the construction of a tree-like space $W$ on which $G\ast H$ acts properly and cocompactly, and the fabrication of a metric $d$ in such a way that the metric completion $\overline{W}$ of $W$, with $\partial W:=\overline{W}\backslash W$, satisfies the axioms of a $\mathcal{Z}$-structure.  The space $W$ is constructed by gluing copies of $X$ and $Y$ in an equivariant manner, where $(\widehat{X},\partial X)$ and $(\widehat{Y},\partial Y)$ are $\mathcal{Z}$-structures on $G$ and $H$, respectively.
\Par
The ability to extend the action of $G\ast H$ on $W$ to $\overline{W}$ is a consequence of the assumption that the actions of $G$ and $H$ extend to $\widehat{X}$ and $\widehat{Y}$, which allows us to obtain:
\Par
\textbf{Theorem \ref{ezstructurefreeprod}  }  \textit{If $G$ and $H$ each admit $\mathcal{EZ}$-structures, then so does $G\ast H$.}
\Par
The other main results found in this paper pertain to direct products of groups which admit $\mathcal{Z}$-structures.
\Par
\textbf{Theorem \ref{zstructurethmdirprod}  }  \textit{If both $G$ and $H$ admit $\mathcal{Z}$-structures, then so does $G\times H$.}
\Par
The proof of Theorem \ref{zstructurethmdirprod} is motivated by its analog in the CAT(0) setting (See \cite{bh}, Example II.8.11(6)):
\Par
If $X$ and $Y$ are CAT(0) spaces, then so is $X\times Y$ under the Euclidean product metric.  If $\partial X$ and $\partial Y$ denote the visual boundaries of $X$ and $Y$ (see Example \ref{CAT(0)} for definitions), let $\partial X\ast \partial Y$ represent the spherical join of $\partial X$ and $\partial Y$, i.e. $\partial X\ast \partial Y=\partial X\times\partial Y\times[0,\frac{\pi}{2}]/\sim$, where $(c_1,c_2,\theta)\sim(c_1',c_2',\theta)$ if and only if $[\theta=0 \text{ and } c_1=c_1']$ or $[\theta=\frac{\pi}{2} \text{ and } c_2=c_2']$.
\Par
Then $\partial (X\times Y)$ is naturally homeomorphic to $\partial X\ast\partial Y$:
\Par
For each $\theta\in\left[0,\frac{\pi}{2}\right]$, $c_1\in\partial X$, and $c_2\in\partial Y$, denote by $(\cos\theta)c_1 +(\sin\theta)c_2$ the point of $\partial(X\times Y)$ represented by the ray $t\mapsto (c_1(t\cos\theta),c_2(t\sin\theta))$.  Then every point of $\partial(X\times Y)$ can be represented by a ray of this form.  Of course, the rays $(\cos\theta)c_1 +(\sin\theta)c_2$ and $(\cos\theta)c_1' +(\sin\theta)c_2'$ are equal when $\theta=0$ and $c_1=c_1'$ (regardless of whether or not we have $c_2=c_2'$), and when $\theta=\frac{\pi}{2}$ and $c_2=c_2'$.  This is consistent with the equivalence relation defining $\partial X\ast \partial Y$.
\Par
Intuitively, for points in $\overline{X\times Y}$ to be ``close'' to a given boundary point $(c_1,c_2,\theta)\in\partial X\ast\partial Y$, it is not sufficient to have $X$-coordinate near $c_1$ and $Y$-coordinate near $c_2$; they must also have ``angle'' near $\theta$.
\Par
Now, given CAT(0) groups $G$ and $H$ which act properly and cocompactly on $X$ and $Y$, respectively, the pair $(\overline{X\times Y},\partial X\ast \partial Y)$ is a $\mathcal{Z}$-structure on $G\times H$.
\Par
The fact that the null condition with respect to the action of $G\times H$ on $X\times Y$ is satisfied by $\overline{X\times Y}$ is an implication of the CAT(0)-inequality, the general idea being that the span of ``angles'' achieved by a compactum shrinks as it is translated outside of a large metric ball.
\Par
To prove the theorem for general direct products, we define a notion of ``slope'' which cooperates with the given $\mathcal{Z}$-set homotopies and certain carefully chosen metrics on the factors $X$ and $Y$.  In the CAT(0) case, we can take as slope function $\displaystyle (x,y)\mapsto\frac{d_Y(y,y_0)}{d_X(x,x_0)}$ thanks to the properties of the CAT(0) metrics; in the general case, we construct functions $p:X\rightarrow [0,\infty)$ and $q:Y\rightarrow[0,\infty)$ to have similar properties and use these to define slope.
\Par
To compactify $X\times Y$, then, we glue to it the join $\partial X\ast\partial Y$ and topologize with neighborhoods of boundary points analogous to those used in the CAT(0) setting.
\Par
By extending the action of $G\times H$ on $X\times Y$ to the $\mathcal{Z}$-compactification $\widehat{X\times Y}$ described above, we obtain:
\Par
\textbf{Theorem \ref{ezstructuredirprod}  }  \textit{If $G$ and $H$ each admit $\mathcal{EZ}$-structures, then so does $G\times H$.}

\pagestyle{myheadings}
\section{$\mathcal{Z}$- and $\mathcal{EZ}$-Structures on Free Products of Groups}
\pagestyle{headings} \thispagestyle{headings}
%%%%%%%%%%%%%%%%%%%%%%%%%%%%%%%%%%%%%%%%%%%%%%%%%%%%%%%%%%%%%%%%%%%%%%%%%%%%%%%%%%%%
%%%% END of main part		%%%%%%%%%%%%%%%%%%%%%%%%%%%%%%%%%%%%%%%%%%%%%%%%%%%%%%%%%%%%
%%%%%%%%%%%%%%%%%%%%%%%%%%%%%%%%%%%%%%%%%%%%%%%%%%%%%%%%%%%%%%%%%%%%%%%%%%%%%%%%%%%%

\noindent Suppose $(\widehat{X},\partial X)$ and $(\widehat{Y},\partial Y)$ are $\mathcal{Z}$-structures on $G$ and $H$, respectively.
\Par
Let $\rho$ and $\tau$ be metrics on $\widehat{X}$ and $\widehat{Y}$ satisfying $\text{diam}_{\rho}\widehat{X}=\text{diam}_{\tau}\widehat{Y}=1$.
\Par
\textit{Notation:}  (i)  Denote by $1_G$ and $1_H$ the identity elements from $G$ and $H$, respectively, and by $\textbf{1}$ the identity element in $G\ast H$.
\Par
(ii)  Whenever we refer to a word $\textbf{1}\neq w\in G\ast H$, it is always assumed that $w$ is reduced, i.e. that consecutive letters of $w$ come from alternating factors, with no letter being an identity element from either group.  With this in mind, we define, for $w\neq \textbf{1}$:
\\
\tab $\bullet$  $\left|w\right|:=$ the length of $w$
\\
\tab $\bullet$ $w(k):=$ the $k$th letter of $w$, counting from left to right
\\
\tab $\bullet$ $w|_k:=$ the leftmost length $k$ subword of $w$
\Par
(iii)  We will use the convention that $|\textbf{1}|=0$, that $\textbf{1}(|\textbf{1}|)=\textbf{1}(0)=\textbf{1}$, and that $\textbf{1}\in G\cap H$.

\begin{definition}  \textbf{(Definition of $W$)}  Let $X_0$ and $Y_0$ be ``base'' copies of $X$ and $Y$, respectively.  Define
\begin{equation*}W:=\biggl(\Bigl(\bigcup\limits_{\substack{w\in G\ast H \\ w(|w|)\in H}}wX_0\Bigr)\bigcup\Bigl(\bigcup\limits_{\substack{w\in G\ast H \\ w(|w|)\in G}}wY_0\Bigr)\biggl)\bigg /\sim\end{equation*}

To define the equivalence relation $\sim$, first note that if $w(|w|)\in H$, then $wX_0$ contains all the points of the form $wgx_0$ for $g\in G$, including the point $wx_0$.  Similarly, if $w(|w|)\in G$, then $wY_0$ contains all the points of the form $why_0$ for $h\in H$, including the point $wy_0$.
\Par
In other words, if $w(|w|)\in H$, then $wx_0\in wX_0$; otherwise $wx_0\in w|_{|w|-1}X_0$.  Likewise, if $w(|w|)\in G$, then $wy_0\in wY_0$; otherwise $wy_0\in w|_{|w|-1}Y_0$.
\Par
Therefore, we define $\sim$ by
\begin{equation*}wx_0\sim wy_0\text{ for all }w\in G\ast H\end{equation*}

The result of this gluing is that, if $w(|w|)\in H$, then $wX_0$ is glued to $w|_{|w|-1} Y_0$ by identifying the points $w x_0\in w X_0$ and $wy_0\in w|_{|w|-1} Y_0$.  Analogously, if $w(|w|)\in G$, then $wY_0$ is glued to $w|_{|w|-1} X_0$ by identifying the points $w y_0\in w Y_0$ and $wx_0\in w|_{|w|-1} X_0$.  (See Figure \ref{W} below.)\end{definition}

\begin{figure}[!h]
\begin{center}
\includegraphics[height=2.75in]{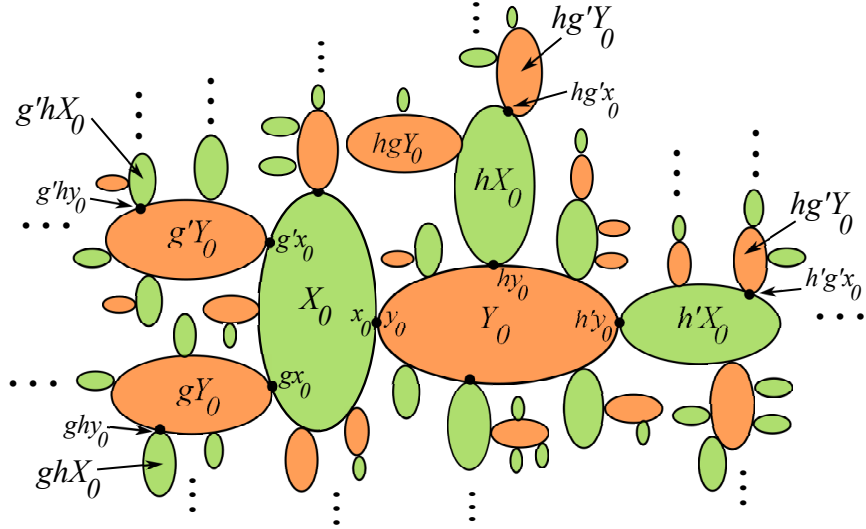}\end{center}
\caption{The gluing schematic for $W$}
\label{W}
\end{figure}

\textit{Warning:}  Although it is included with the intention of providing some intuition about the construction of $W$, Figure \ref{W} has the potential to be a bit misleading, due to its two-dimensionality.  We warn the reader that the points $wx_0$ and $wy_0$ are \textbf{not} boundary points of translates of $X_0$ and $Y_0$, despite their appearance in the graphic.

\begin{remark}\label{remarksW}  (i)  The above construction of $W$ is similar to that found in the proof of Theorem II.11.16 in \cite{bh}; said theorem produces a complete CAT(0) space on which $\Gamma_0\ast_{\Gamma}\Gamma_1$ acts properly [and cocompactly]  by isometries when each of $\Gamma_0,\Gamma_1$, and $\Gamma$ acts properly [and cocompactly] by isometries on a CAT(0) space.  Our construction allows more general spaces but yields essentially the same underlying space under the hypotheses of the cited theorem in the case where $\Gamma$ is trivial.
\Par
(ii)  The action of $G\ast H$ on $W$ is as follows:
\Par
Note that each point of $wX_0$ where $w(|w|)\in H$ has the form $wx$ for some $x\in X_0$.  Thus if $x\in X_0$, we define $w\cdot x:=wx\in wX_0$.
\Par
We define $w\cdot y$ for $y\in Y_0$ and $w(|w|)\in G$ similarly.
\Par
If $x\in X_0$ and $w(|w|)\in G$, then $wX_0=w|_{|w|-1}w(|w|)X_0=w|_{|w|-1}X_0$, and $w\cdot x:=wx=w|_{|w|-1}\cdot w(|w|)x\in w|_{|w|-1}X_0$.  Similarly, if $y\in Y_0$ and $w(|w|)\in H$, then $wY_0=w|_{|w|-1}Y_0$, and $w\cdot y:=wy\in w|_{|w|-1}Y_0$.
\Par
Now for a general point $z\in W$, there is some $x\in X_0$ or $y\in Y_0$ and some $w'\in G\ast H$ such that $z=w'\cdot x$ or $z=w'\cdot y$.  In the first case, we define $w\cdot z:=(ww')\cdot x$; otherwise we set $w\cdot z:=(ww')\cdot y$.
\Par
(iii)  For the rest of this chapter, it is to be understood that the use of the symbol $wX_0$ implies that $w(|w|)\in H$, and the use of the symbol $wY_0$ implies that $w(|w|)\in G$.
\end{remark}

\begin{definition}  \textbf{(Definition of metric $d$ on $W$)}  Define $r:G\cup H\rightarrow \mathbb{N}$ by
\\
\tab $r(g)=n\ \Longleftrightarrow \ gx_0\in B_{\rho}(\partial X,\frac{1}{2^{n-1}})\backslash B_{\rho}(\partial{X},\frac{1}{2^n})$
\\
\tab $r(h)=n\ \Longleftrightarrow\  hy_0\in B_{\tau}(\partial Y,\frac{1}{2^{n-1}})\backslash B_{\tau}(\partial{Y},\frac{1}{2^n})$
\Par
and $r^{\ast}:G\ast H\rightarrow (0,1]$ by
\\
\tab $\textbf{1}\in G\ast H\longmapsto 1$
\\
\tab $g\in G\longmapsto \frac{1}{2^{r(g)}}$
\\
\tab $h\in H\longmapsto\frac{1}{2^{r(h)}}$
\\
\tab $w\in G\ast H\longmapsto\prod\limits_{k=1}^{\left|w\right|}\frac{1}{2^{r(w(k))}}=\prod\limits_{k=1}^{\left|w\right|}r^{\ast}(w(k))$
\Par
We use the function $r^{\ast}$ to define a metric $d$ on $W$:
\Par
The restriction of $d$ to $wX_0$ (respectively $wY_0$) is declared to be a rescaling of $\rho$ (respectively $\tau$) so that $\text{diam}_d wX_0=\text{diam}_d wY_0=r^{\ast}(w)$.
\Par
For points $x,x'\in W$ which do not lie in a single translate of $X_0$ or $Y_0$, we say that a finite sequence $\left\{w_ix_0\right\}_{i=1}^k$ \textbf{\textit{connects}} $x$ and $x'$ if each of the pairs $(x,w_1x_0)$, $(w_kx_0, x')$, and $(w_ix_0,w_{i+1}x_0)$ for $i=1,\ldots,k-1$, lives in a single translate of $X_0$ or $Y_0$.
\Par
To define $d(x,x')$ when $x$ and $x'$ do not lie in a single translate of $X_0$ or $Y_0$, let $\left\{w_ix_0\right\}_{i=1}^k$ be the shortest sequence which connects $x$ and $x'$, and set
\begin{equation*}d(x,x'):=d(x,w_1x_0)+\sum\limits_{i=1}^{k-1}d(w_ix_0,w_{i+1}x_0) + d(w_kx_0,x')\end{equation*}\end{definition}

It is an easy exercise to check that $d$ is indeed a metric on $W$.  The proof that $d$ satisfies the triangle inequality resembles its counterpart for a tree, using in addition the triangle inequality on the components $wX_0$ and $wY_0$.
\Par
Now $(W,d)$ is a metric space, and we denote by $\overline{W}$ the metric completion of $(W,d)$ and set $\partial W:=\overline{W}\backslash W$.
\Par
Let us discuss briefly the convention to be used from this point forward when referring to points of $\partial W$.  We may view $\overline{W}$ as the set of equivalence classes of Cauchy sequences in $W$, where $\sim$ is generated by $\left\{x_i\right\}_{i=1}^{\infty}\sim\left\{x_i'\right\}_{i=1}^{\infty}$ if $d(x_i,x_i')\rightarrow 0$ as $i\rightarrow\infty$.  It is not difficult to see that, if a Cauchy sequence $\vec{x}=\left\{x_i\right\}_{i=1}^{\infty}\subseteq W$ does not converge in $W$, then $\vec{x}\sim\vec{x}'$ where $\vec{x}'=\left\{x'_i\right\}_{i=1}^{\infty}$ falls under one of three possible categories:
\Par
\tab (i)  There exists $w\in G\ast H$ such that $x'_i\in wX_0$ for all $i\in\mathbb{N}$
\\
\tab (ii)  There exists $w\in G\ast H$ such that $x'_i\in wY_0$ for all $i\in\mathbb{N}$
\\
\tab (iii)  There exists a sequence $\left\{w_i\right\}_{i=1}^{\infty}\subseteq G\ast H$ with $\left|w_i\right|=i$, $w_i|_{i-1}=w_{i-1}$, and $x'_i=w_ix_0$ for all $i\in\mathbb{N}$.
\Par
If $\vec{x}'$ falls under category (i), then $x_i,x'_i\rightarrow \overline{x}$ for some $\overline{x}\in\partial wX_0=w\partial X_0$; if $\vec{x}'$ falls under category (ii), then $x_i,x'_i\rightarrow \overline{y}$ for some $\overline{y}\in\partial wY_0=w\partial Y_0$.  Otherwise, $\vec{x}'$ corresponds to a unique element of $\mathcal{A}$, where
\begin{equation*}\mathcal{A}=\biggl\{\overline{\alpha}\ |\ \overline{\alpha}=\left\{\alpha_i\right\}_{i=1}^{\infty}\ ,\ \alpha_i=w_ix_0\ , \ w_i\in G\ast H\ , \ \left|w_i\right|=i\ ,\ w_{i+1}|_i=w_i\ \forall i\in\mathbb{N}\biggr\}\end{equation*}

\noindent Hence, we will refer to points $\alpha\in\partial W$ as having three possible types, each of which corresponds to one of the above-named categories for Cauchy sequences in $W$ which do not converge in $W$:
\Par
\tab (i)  $\alpha\in w\partial X_0$
\\
\tab (ii)  $\alpha\in w\partial Y_0$
\\
\tab (iii)  $\alpha\in\mathcal{A}$
\\
\begin{proposition} \label{Wbarcpt} $\overline{W}$ is compact.\end{proposition}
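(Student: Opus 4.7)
The plan is to prove $\overline{W}$ is totally bounded, which—combined with its automatic completeness as a metric completion—will yield compactness. The key structural observation is that $W$ is a union of translates $T_w$ (either $wX_0$ or $wY_0$) of diameter $r^*(w)$, arranged as a tree glued along the basepoints $wx_0 \sim wy_0$. I will show that only finitely many translates have non-negligible diameter, and that the small translates accumulate onto this finite collection in a geometrically controlled way.

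First I would verify that for every $n \in \mathbb{N}$, only finitely many words $w \in G \ast H$ satisfy $r^*(w) \geq 1/2^n$. The inequality forces $\sum_k r(w(k)) \leq n$, so $|w| \leq n$ and each letter has $r$-value at most $n$. The set $\{g \in G : r(g) \leq n\}$ consists of those $g$ with $gx_0$ in $\widehat{X} \setminus B_\rho(\partial X, 1/2^n)$, a compact subset of $X$; properness of the $G$-action forces this set to be finite, and similarly for $H$. With only finitely many admissible letters and word length bounded by $n$, only finitely many such words exist. Moreover, each translate $T_w$ is isometric to a rescaling of $(X,\rho)$ or $(Y,\tau)$ by factor $r^*(w)$, so its closure in $\overline{W}$ is isometric to the corresponding rescaling of $\widehat{X}$ or $\widehat{Y}$, and is therefore compact.

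Given $\epsilon > 0$, I would choose $n$ large enough that $2/2^n < \epsilon/2$ and let $K$ be the finite union of closures $\overline{T_w}$ over all words $w$ with $r^*(w) \geq 1/2^n$; by the previous paragraph, $K$ is compact. The core estimate is that $d(z,K) < \epsilon/2$ for every $z \in \overline{W}$. For $z \in T_w \subseteq W$, let $k$ be the largest index with $r^*(w|_k) \geq 1/2^n$ (which exists since $r^*(\textbf{1}) = 1$). If $k = |w|$ then $z \in K$; otherwise, I construct an ascending path from $z$ through the successive attachment basepoints — each of the form $w|_j x_0$ or $w|_j y_0$ depending on parity — where the $j$-th segment lies inside the single translate $T_{w|_j}$ of diameter $r^*(w|_j)$. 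The total path length is then at most
\begin{equation*}
\sum_{j=k+1}^{|w|} r^*(w|_j) \;\leq\; r^*(w|_{k+1}) \sum_{i=0}^{\infty} (1/2)^i \;=\; 2\, r^*(w|_{k+1}) \;<\; 2/2^n \;<\; \epsilon/2,
\end{equation*}
using multiplicativity of $r^*$ together with $r(w(i)) \geq 1$ for all $i$. For $z \in \partial W$, I approximate $z$ by a sequence in $W$ and use closedness of $K$. A finite cover of the compact set $K$ by $\epsilon/2$-balls then extends via the triangle inequality to a finite $\epsilon$-cover of $\overline{W}$, establishing total boundedness.

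The main obstacle is the path-length bookkeeping in the third paragraph: one must check that at each step of the ascent, the previous attachment basepoint and the next one both lie in a common translate $T_{w|_j}$, so their separation is bounded by $\text{diam}\, T_{w|_j} = r^*(w|_j)$. This relies on the alternation of $G$- and $H$-letters in reduced words — which guarantees that $T_{w|_{j+1}}$ attaches to $T_{w|_j}$ at exactly one of the basepoints under discussion — together with the multiplicative decay built into $r^*$ that makes the series telescope to a constant multiple of $r^*(w|_{k+1})$.
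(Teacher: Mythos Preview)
Your proof is correct and follows the same overall template as the paper (completeness plus total boundedness), but the decomposition you use is genuinely different and in fact cleaner. The paper truncates by \emph{word length}: it defines $\overline{W_k}$ as the union of all translates $w\widehat{X}_0$, $w\widehat{Y}_0$ with $|w|\le k$, which is still an \emph{infinite} union, and then spends the second half of the proof on an inductive construction (the sets $A_1$, $A_i^{w|_{i-1}}$, \ldots) to show that $\overline{W_k}$ itself is totally bounded. You instead truncate by the \emph{value of $r^*$}: your set $K$ is the union over the finitely many $w$ with $r^*(w)\ge 1/2^n$, so $K$ is a finite union of compacta and is automatically compact. The properness of the $G$- and $H$-actions, which the paper also uses (implicitly, when it asserts $A_1$ is finite), is what makes your finiteness claim go through. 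Your geometric-series estimate for the distance from an arbitrary $z$ to $K$ is the same mechanism the paper uses to show that branches beyond $\overline{W_k}$ are small, but you need only one such estimate rather than a layered induction. Both arguments are correct; yours trades the paper's nested-cover bookkeeping for a single clean finiteness lemma.
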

\begin{proof}  Since $\overline{W}$ is complete, it suffices to show that $\overline{W}$ is totally bounded, i.e. that for any $\epsilon>0$, there is a finite cover of $\overline{W}$ by $\epsilon$-balls.
\Par
Let $\epsilon>0$.  Choose $k\in\mathbb{N}$ such that $\frac{1}{2^{k-1}}<\frac{\epsilon}{4}$.
\Par
Then if a reduced word $w\in G\ast H$ satisfies $\left|w\right|\geq k$, we have
\begin{eqnarray*}\text{diam}_d\Biggl(\biggl(\bigcup_{\substack{w'|_{\left|w\right|}=w \\ w'(\left|w'\right|)\in H}}w'\widehat{X}_0\biggr)\bigcup\biggl(\bigcup_{\substack{w'|_{\left|w\right|}=w \\ w'(\left|w'\right|)\in G}}w'\widehat{Y_0}\biggr)\Biggr)
\leq\sum\limits_{n=k}^{\infty}\frac{1}{2^n}=\frac{1}{2^{k-1}}<\frac{\epsilon}{4}\end{eqnarray*}

Thus if $x\in v\widehat{X}_0$, where $v|_{\left|w\right|}=w$ and $\left|w\right|\geq k$, then
\begin{eqnarray*}d(x,wx_0)&\leq&\text{diam}_d\Biggl(\biggl(\bigcup_{\substack{w'|_{\left|w\right|}=w \\ w'(\left|w'\right|)\in H}}w'\widehat{X}_0\biggr)\bigcup\biggl(\bigcup_{\substack{w'|_{\left|w\right|}=w \\ w'(\left|w'\right|)\in G}}w'\widehat{Y_0}\biggr)\Biggr)+\text{diam}_d w\widehat{X}_0
\\
&\leq&\frac{1}{2^{k-1}}+\frac{1}{2^k}<2\cdot\frac{1}{2^{k-1}}<\frac{\epsilon}{2}\end{eqnarray*}
Similarly, if $y\in v\widehat{Y_0}$, where $v|_{\left|w\right|}=w$ and $\left|w\right|\geq k$, then $d(y,wx_0)<\frac{\epsilon}{2}$.
\Par
Therefore, if $\left|w\right|\geq k$, then
\begin{equation*}\biggl(\Bigl(\bigcup_{\substack{w'|_{\left|w\right|}=w \\ w'(\left|w'\right|)\in H}}w'\widehat{X}_0\Bigr)\bigcup\Bigl(\bigcup_{\substack{w'|_{\left|w\right|}=w \\ w'(\left|w'\right|)\in G}}w'\widehat{Y_0}\Bigr)\biggr)\subseteq B_d(wx_0,\frac{\epsilon}{2})\end{equation*}

\noindent For any $j\in\mathbb{N}$, denote by $\overline{W_j}$ the union of all translates of $\widehat{X}_0$ and $\widehat{Y_0}$ by elements of $G\ast H$ having length no more than $j$, i.e.
\begin{equation*}\overline{W_j}:=\biggl(\Bigl(\bigcup_{\substack{\left|w'\right|\leq j \\ w'(\left|w'\right|)\in H}}w'\widehat{X}_0\Bigr)\bigcup\Bigl(\bigcup_{\substack{\left|w'\right|\leq j \\ w'(\left|w'\right|)\in G}}w'\widehat{Y_0}\Bigr)\biggr)\end{equation*}

Now suppose we have a finite cover $\mathcal{U}$ of $\overline{W_k}$ (where $k$ satisfies $\frac{1}{2^{k-1}}<\frac{\epsilon}{4}$, as earlier) by $\frac{\epsilon}{2}$-balls, and let $\mathcal{U}'$ be the finite cover of $\overline{W_k}$ by $\epsilon$-balls obtained by increasing the radius of each element of $\mathcal{U}$ to $\epsilon$.
\Par
We claim that $\mathcal{U}'$ covers all of $\overline{W}$:
\Par
First, consider a word $w\in G\ast H$ having $\left|w\right|=k$.
Since $wx_0\in w\widehat{X}_0$ or $wx_0\in w\widehat{Y_0}$, and $\left|w\right|=k$, there is some $y\in\overline{W_k}$ such that $wx_0\in B_d(y,\frac{\epsilon}{2})\in\mathcal{U}$.  Then $B_d(y,\epsilon)\in\mathcal{U}'$, and by earlier comments, we have 
\begin{equation*}\biggl(\Bigl(\bigcup_{\substack{w'|_{\left|w\right|}=w \\ w'(\left|w'\right|)\in H}}w'\widehat{X}_0\Bigr)\bigcup\Bigl(\bigcup_{\substack{w'|_{\left|w\right|}=w \\ w'(\left|w'\right|)\in G}}w'\widehat{Y_0}\Bigr)\biggr)\subseteq B_d(wx_0,\frac{\epsilon}{2})\subseteq B_d(y,\epsilon)\end{equation*}

Therefore
\begin{equation*}\biggl(\Bigl(\bigcup_{w'\in G\ast H}w'\widehat{X}_0\Bigr)\bigcup\Bigr(\bigcup_{w'\in G\ast H}w'\widehat{Y_0}\Bigr)\biggr)=\biggl(\Bigl(\bigcup_{\substack{w'|_{\left|w\right|}=w \\ w'(\left|w'\right|)\in H}}w'\widehat{X}_0\Bigr)\bigcup\Bigl(\bigcup_{\substack{w'|_{\left|w\right|}=w \\ w'(\left|w'\right|)\in G}}w'\widehat{Y_0}\Bigr)\biggr)\bigcup\overline{W_k}\end{equation*} is covered by $\mathcal{U}'$.
\Par
Moreover, any $\overline{\alpha}\in \mathcal{A}$ also satisfies $d(\overline{\alpha},\alpha_k)=d(\overline{\alpha},w_kx_0)\leq\frac{\epsilon}{4}$ by similar calculations.  Since $w_kx_0\in w_k\widehat{X}_0$ or $w_kx_0\in w_k\widehat{Y_0}$, and $\left|w_k\right|=k$, then, like above, there is some $y\in\overline{W_k}$ such that $B_d(y,\epsilon)\in\mathcal{U}'$ and
\begin{eqnarray*}d(\overline{\alpha},y)\leq d(\overline{\alpha},\alpha_k)+d(\alpha_k,y)&=&d(\overline{\alpha},w_kx_0)+d(w_kx_0,y)
\\
&<&\frac{\epsilon}{4}+\frac{\epsilon}{2}<\epsilon\end{eqnarray*}

\noindent Therefore $\mathcal{U}'$ covers $\overline{W}$.
\Par
We finish the proof of the proposition by constructing a finite cover $\mathcal{U}$ of $\overline{W_k}$ by $\frac{\epsilon}{2}$-balls:
\Par
Begin with a finite cover $\mathcal{U}_0$ of $\widehat{X}_0\cup\widehat{Y_0}$ by $\frac{\epsilon}{2}$-balls; add in finitely many $\frac{\epsilon}{2}$-balls centered at points of $\partial X_0$ and $\partial Y_0$ to cover $\partial X_0$ and $\partial Y_0$.  Let $\frac{1}{2}>\delta_0>0$ be such that if $d(x,\partial X_0)<3\delta_0$, then $x$ lies in some element of $\mathcal{U}_0$ based at a point of $\partial X_0$, and if $d(y, \partial Y_0)<3\delta_0$, then $y$ lies in an element of $\mathcal{U}_0$ based at a point of $\partial Y_0$.
\Par
Choose $N>0$ such that $\frac{1}{2^{N}}<\delta_0\leq\frac{1}{2^{N-1}}$.
\Par
Now if $g\in G$ satisfies $d(gx_0,\partial X_0)<\delta_0$, then $r(g)\geq N$, so that $\text{diam}_d gY_0\leq\frac{1}{2^N}$.  Similarly, if $d(hy_0,\partial Y_0)<\delta_0$, then $\text{diam}_d hX_0\leq\frac{1}{2^N}$.
\Par
Let $A_1=\left\{g\in G\ |\ d(gx_0,\partial X_0)\geq\delta_0\right\}\cup\left\{ h\in H\ |\ d(hy_0,\partial Y_0)\geq\delta_0\right\}$.
\Par
Then $A_1$ is finite, and if $g\in G\backslash A_1$, then $d(gx_0,\partial X_0)<\delta_0$, and for any $x\in w \widehat{X}_0$ or $x\in w\widehat{Y_0}$ where $w|_1=g$, we have
\begin{equation*} d(x,gx_0)\leq\sum\limits_{n=N}^{\infty}\frac{1}{2^n}=\frac{1}{2^{N-1}}<2\delta_0\end{equation*}
so that
\begin{equation*}d(x,\partial X_0)\leq d(x,gx_0)+d(gx_0,\partial X_0)<2\delta_0+\delta_0=3\delta_0\end{equation*}
which implies that $x$ lies in some element of $\mathcal{U}_0$.
\Par
Therefore, $\mathcal{U}_0$ is a finite cover of $\widehat{X}_0\cup\widehat{Y_0}\cup\left(\bigcup_{\substack{w\in G\ast H \\ w|_1\notin A_1}}w\widehat{X}_0\right)\cup\left(\bigcup_{\substack{w\in G\ast H \\ w|_1\notin A_1}}w\widehat{Y_0}\right)$ by $\frac{\epsilon}{2}$-balls.
\Par
Now let $\mathcal{U}_1$ be a finite cover of $\left(\bigcup_{g\in A_1}g\widehat{Y}_0\right)\cup\left(\bigcup_{h\in A_1}h\widehat{X}_0\right)$ by $\frac{\epsilon}{2}$-balls.  Use a similar argument to the above to obtain, for each $g\in G\backslash A_1$ a finite subset $A_2^g\subseteq H$ such that if $h\in H\backslash A_2^g$, then $\left(\bigcup_{w|_2=gh}w\widehat{X}_0\right)\cup\left(\bigcup_{w|_2=gh}w\widehat{Y_0}\right)$ is contained in an element of $\mathcal{U}_1$.
\Par
Continue in this manner, letting $\mathcal{U}_m'=\bigcup_{i=0}^m\mathcal{U}_i$ for each $m=0,\ldots,k$.
\Par
Then $\mathcal{U}_m'$ covers $\overline{W_m}$ by finitely many $\frac{\epsilon}{2}$-balls for each $m=0,\ldots,k$, so that $\mathcal{U}:=\mathcal{U}_k'$ is a finite cover of $\overline{W_k}$ by $\frac{\epsilon}{2}$-balls, as desired.
\end{proof}

To see that $\overline{W}$ is an ANR, we will construct for each $\epsilon>0$ an ANR $Z_{\epsilon}\subseteq\overline{W}$ which $\epsilon$-dominates $\overline{W}$, and apply Theorem \ref{hanner}.
\Par
Given $\epsilon>0$, define
\begin{equation*}Z_{\epsilon}:=\widehat{X}_0\bigcup\widehat{Y}_0\bigcup\biggl(\bigcup_{\substack{|w|\leq k \\ w(1)\in A_1\cap H \\ w(i)\in A_{i}^{w|_{i-1}}}}w\widehat{X}_0\biggr)\bigcup\biggl(\bigcup_{\substack{|w|\leq k \\ w(1)\in A_1\cap G \\ w(i)\in A_{i}^{w|_{i-1}}}}w\widehat{Y}_0\biggr)\end{equation*}

where $k$, $A_1$, and $A^{w|_{i-1}}_i$ are defined as in the proof of Proposition \ref{Wbarcpt}.  Then $Z_{\epsilon}$ is a finite connected union of translates of $\widehat{X}_0$ and $\widehat{Y_0}$ with the property that
\begin{eqnarray*}\ &\text{If } w\widehat{X}_0\not\subseteq Z_{\epsilon},\text{ then diam}_d\left(\left(\bigcup_{w'|_{|w|}=w}w'\widehat{X}_0\right)\bigcup\left(\bigcup_{w'|_{|w|}=w}w'\widehat{Y_0}\right)\right)<\epsilon 
\\
\text{and} &\ 
\\
\ & \text{If } w\widehat{Y_0}\not\subseteq Z_{\epsilon},\text{ then diam}_d\left(\left(\bigcup_{w'|_{|w|}=w}w'\widehat{X}_0\right)\bigcup\left(\bigcup_{w'|_{|w|}=w}w'\widehat{Y_0}\right)\right)<\epsilon\end{eqnarray*}

Let $M_{\epsilon}$ denote the finite set of words in $G\ast H$ corresponding to the translates of $\widehat{X}_0$ and $\widehat{Y_0}$ in $Z_{\epsilon}$, and define a function $m:G\ast H\rightarrow \mathbb{N}\cup\left\{0\right\}$ by
\begin{equation*}m(w):=\max\left\{k\ |\ w|_k\in M_{\epsilon}\right\}\end{equation*}

Note that $m(w)=\left|w\right|$ if and only if $w\widehat{X}_0\subseteq Z_{\epsilon}$ (or $w\widehat{Y_0}\subseteq Z_{\epsilon}$).
\Par
Define maps $\phi:Z_{\epsilon}\rightarrow \overline{W}$ and $\psi:\overline{W}\rightarrow Z_{\epsilon}$ to be inclusion and ``projection'' maps, respectively.  By ``projection,'' we mean that $\psi|_{Z_{\epsilon}}\equiv \text{id}_{Z_{\epsilon}}$, and if, for example, $x\in w\widehat{X}_0$, where $w\notin M_{\epsilon}$, then $\psi(x)=w|_{m(w)+1}x_0\in Z_{\epsilon}$.

\begin{lemma}  \label{K}For any fixed $\epsilon>0$, let $Z_{\epsilon}$, $\phi$, and $\psi$ be defined as above.  Then there is a homotopy $K:\overline{W}\times[0,1]\rightarrow\overline{W}$ having the following properties:
\\
\tab (i)  $K$ is a $2\epsilon$-homotopy with $K_0\equiv\text{id}_{\overline{W}}$ and $K_1\equiv\phi\circ\psi$.
\\
\tab (ii)  $K_t(\overline{W}\backslash Z_{\epsilon})\cap\partial W=\emptyset$ for all $t>0$.\end{lemma}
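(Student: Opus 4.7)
My plan is to build $K$ by gluing together $\mathcal{Z}$-set strong deformation retractions on the individual translates of $\widehat{X}_0$ and $\widehat{Y}_0$. Apply Lemma \ref{sdrzcompactification} to the AR $\widehat{X}_0$ (respectively $\widehat{Y}_0$) to obtain a $\mathcal{Z}$-set strong deformation retraction $F^X\colon\widehat{X}_0\times[0,1]\to\widehat{X}_0$ to $x_0$ which fixes $x_0$ throughout (respectively $F^Y$ to $y_0$). Transport these via the natural similarity $\widehat{X}_0\to w\widehat{X}_0$ (and $\widehat{Y}_0\to w\widehat{Y}_0$) to obtain, for each translate, a $\mathcal{Z}$-set strong deformation retraction onto its gluing base point $wx_0=wy_0$.

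Next, decompose $\overline{W}\setminus Z_\epsilon$ into branches: for each word $w$ with $w\notin M_\epsilon$ but $w|_{|w|-1}\in M_\epsilon$, set
\[
B_w:=\biggl(\bigcup_{\substack{w'|_{|w|}=w\\ w'(|w'|)\in H}}w'\widehat{X}_0\biggr)\cup\biggl(\bigcup_{\substack{w'|_{|w|}=w\\ w'(|w'|)\in G}}w'\widehat{Y}_0\biggr).
\]
Since $w\notin M_\epsilon$, the defining property of $Z_\epsilon$ gives $\text{diam}_d B_w<\epsilon$, and every $w'\in B_w$ has $m(w')=|w|-1$, so $\psi$ collapses all of $B_w$ to the single point $wx_0$. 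Each $B_w$ further decomposes as the root translate $T_w\in\{w\widehat{X}_0,w\widehat{Y}_0\}$ together with sub-branches $B_{wu}$ (defined by the same formula for longer words $wu$) attached at the gluing points of $T_w$.

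Set $K(x,t):=x$ on $Z_\epsilon$ and define $K$ on each branch $B_w$ recursively. During $t\in[0,1/2]$, apply the analogous recursive procedure on each sub-branch $B_{wu}$ (with time rescaled to $[0,1/2]$); this funnels every point of $B_w$ into $T_w$ by time $t=1/2$. During $t\in[1/2,1]$, apply the transported $F^X$ or $F^Y$ on $T_w$ (time rescaled) to retract $T_w$ onto $wx_0$. Because $F^X$ and $F^Y$ fix their base points, the two prescriptions agree at gluing points, so $K$ is well defined. Then $K_0\equiv\text{id}_{\overline{W}}$ and $K_1\equiv\phi\circ\psi$ by construction; the trajectory $K(\{x\}\times[0,1])$ of any $x$ lies in some $B_w$ with $\text{diam}_d B_w<\epsilon$, giving the $2\epsilon$-bound; and property (ii) follows from the $\mathcal{Z}$-set property of each translate retraction, which keeps $K_t(x)$ in the interior of some translate whenever $x\notin Z_\epsilon$ and $t>0$, hence away from $\partial W$.

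The main obstacle will be continuity of $K$ at type-(iii) points $\overline{\alpha}\in\mathcal{A}$, which are limits of chains descending through infinitely many nested translates. The halving-time recursion is designed precisely for this: along any descending chain both the diameters of the translates and the time budget at least halve at each step, so for any fixed $t>0$ the value $K_t$ is determined at some finite depth, and continuity at $\overline{\alpha}$ reduces to the geometric diameter decay already exploited in the compactness argument of Proposition \ref{Wbarcpt}.
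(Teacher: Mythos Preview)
Your recursion is set up so that on the root translate $T_w$ you do nothing on the first half of the (local) time interval and apply the transported $F^X$ (or $F^Y$) only on the second half. That breaks property~(ii). Take any $\overline{x}\in w\partial X_0\subset T_w$ with $j(w)=n\ge 1$. In your scheme $\overline{x}\in T_w$ sits idle while the sub-branches collapse, so $K_t(\overline{x})=\overline{x}\in\partial W$ for every $t\in\bigl(0,\tfrac{1}{2^{n}}\bigr]$. Your sentence ``the $\mathcal{Z}$-set property\ldots keeps $K_t(x)$ in the interior of some translate whenever $x\notin Z_\epsilon$ and $t>0$'' is therefore false: the $\mathcal{Z}$-set homotopy on $T_w$ has not started yet.

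The paper avoids this by running $F$ (resp.\ $J$) on each translate from time $0$, not from the midpoint, so that boundary points of $w\widehat{X}_0$ are pushed off $\partial W$ immediately. The price is that gluing points $wgx_0\in T_w$ would then start moving before their attached sub-branches have collapsed, destroying well-definedness. The paper resolves this with Lemma~\ref{slowedsdr}: it chooses $F$ and $J$ so that $F_t$ is the identity on $\widehat{X}\setminus B_\rho(\partial X,\tfrac{1}{2^k})$ for $t\le\tfrac{1}{2^k}$, hence each gluing point $gx_0$ stays fixed for exactly the interval $[0,\tfrac{1}{2^{r(g)}}]$. The time budget for the sub-branch at $wgx_0$ is then $t(wg)=t(w)\cdot\tfrac{1}{2^{r(g)}}$, matching this freeze time. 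Note that a uniform halving cannot be grafted onto this idea: gluing points arbitrarily close to $\partial X_0$ have arbitrarily short freeze times, so there is no single fraction of the interval during which \emph{all} gluing points of $T_w$ are stationary while $F$ is already moving $\partial X_0$. You need the depth- and letter-dependent time allocation $t(w)$ (or something equivalent), together with the slowed homotopy of Lemma~\ref{slowedsdr}, to get (i), (ii), and continuity simultaneously.
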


Note that, by Lemma \ref{sdrzcompactification}, we may choose homotopies $F:\widehat{X}\times[0,1]\rightarrow\widehat{X}$ and $J:\widehat{Y}\times[0,1]\rightarrow \widehat{Y}$ and basepoints $x_0\in X$, $y_0\in Y$ such that
\begin{center}\begin{tabular}{c c} $F_0\equiv\text{id}_{\widehat{X}}\ \ $&$ \ \ J_0\equiv\text{id}_{\widehat{Y}}$
\\
\multicolumn{2}{c}{$F_t(\widehat{X})\cap\partial X=J_t(\widehat{Y})\cap\partial Y=\emptyset \text{ for all }t\in(0,1]$}
\\
$F_1(\widehat{X})=\left\{x_0\right\}\ \ $&$\ \ J_1(\widehat{Y})=\left\{y_0\right\}$
\\
$F(x_0,t)=x_0\text{ for all }t\in[0,1]$ & $J(y_0,t)=y_0\text{ for all }t\in[0,1]$\end{tabular}\end{center}

Observe also that, by Lemma \ref{slowedsdr}, we may assume that $F$ and $J$ satisfy (in addition to being $\mathcal{Z}$-set homotopies which are strong deformation retractions)
\begin{center}$ F_t|_{\widehat{X}\backslash B_{\rho}(\partial X,\frac{1}{2^k})}\equiv \text{id}_{\widehat{X}\backslash B_{\rho}(\partial X,\frac{1}{2^k})}\text{ if } t\in[0,\frac{1}{2^k}]$\end{center}

and

\begin{center}$J_t|_{\widehat{Y}\backslash B_{\tau}(\partial Y,\frac{1}{2^k})}\equiv \text{id}_{\widehat{Y}\backslash B_{\tau}(\partial Y,\frac{1}{2^k})}\text{ if } t\in[0,\frac{1}{2^k}]$\end{center}

These homotopies are used to construct $K$ and also to prove Proposition \ref{freeprodZset}.  We refer the reader to the end of the chapter for the proof of Lemma \ref{K}.

\begin{proposition}\label{WbarANR} $\overline{W}$ is an ANR.\end{proposition}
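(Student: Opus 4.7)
The plan is to invoke Hanner's criterion (Theorem \ref{hanner}(b)): it suffices to show that for every $\epsilon>0$ there exists an ANR which $\epsilon$-dominates $\overline{W}$. The natural candidate is $Z_{\epsilon/2}$, and Lemma \ref{K} does almost all of the work for us — it produces inclusion and ``projection'' maps $\phi$ and $\psi$ together with a homotopy $K$ of diameter less than $\epsilon$ from $\phi\circ\psi$ to $\text{id}_{\overline{W}}$, which is precisely an $\epsilon$-domination in the sense required by Theorem \ref{hanner}(b). So the only missing ingredient is that $Z_{\epsilon/2}$ is itself an ANR.

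To verify the ANR property for $Z_{\epsilon}$, I would first observe that $Z_\epsilon$ is a \emph{finite} union of translates $w\widehat{X}_0$ and $w\widehat{Y}_0$, each of which is homeomorphic to $\widehat{X}$ or $\widehat{Y}$ and therefore a compact ER, hence an ANR by Fact \ref{ARER}. Moreover, by construction of $W$, any two of these translates meet in at most a single point of the form $wx_0=wy_0$, which is trivially an ANR. I would then apply the classical Hanner--Borsuk gluing theorem for ANRs (Theorem III.8.1 of Hu): if $A$ and $B$ are closed subsets of $A\cup B$, each an ANR, and $A\cap B$ is an ANR, then $A\cup B$ is an ANR. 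An easy induction on the number of translates comprising $Z_\epsilon$ — peeling off one outermost translate at a time, since the adjacency graph of translates in $Z_\epsilon$ is a finite tree — then shows that $Z_\epsilon$ is an ANR.

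Combining these two observations, $Z_{\epsilon/2}$ is an ANR which $\epsilon$-dominates $\overline{W}$ in the metric $d$, so Theorem \ref{hanner}(b) concludes that $\overline{W}$ is an ANR.

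The main obstacle, to the extent there is one, is the ANR status of $Z_\epsilon$; once we have that, Lemma \ref{K} hands us the $\epsilon$-domination on a platter. The subtle point in the gluing argument is ensuring that the intersections really are single points (so that they are trivially ANRs) and that the tree-like adjacency structure lets us induct cleanly — this is where the specific way $W$ was built, by identifying only the distinguished basepoints $wx_0$ with $wy_0$, pays off.
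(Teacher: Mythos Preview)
Your proposal is correct and follows essentially the same approach as the paper: invoke Hanner's criterion (Theorem~\ref{hanner}) via Lemma~\ref{K}, and verify that $Z_\epsilon$ is an ANR by an inductive gluing argument exploiting the tree structure and the fact that adjacent translates meet in a single point. The only cosmetic difference is that the paper cites Hu's adjunction-space ANR theorem (Section~VI.1) in place of the closed-union theorem (III.8.1) you invoke; the two are interchangeable in this setting.
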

\begin{proof} By Theorem \ref{hanner}, it suffices to show that for every $\epsilon>0$, there is an ANR which $2\epsilon$-dominates $\overline{W}$.
\Par
Fix $\epsilon>0$, and let $Z_{\epsilon}$ be defined as above.
\Par
As a subspace of $\overline{W}$, it is clear that $Z_{\epsilon}$ is metrizable.  That $Z_{\epsilon}$ is an ANR follows from the fact that translates of $\widehat{X}_0$ and $\widehat{Y}_0$ are glued together along at most one point, and the inductive application of the following theorem:

\begin{theorem}(See Section VI.1 of \cite{hu})  If $A$, $B$, and $C$ are ANR's, with $A\subseteq B$, and if the adjunction space $Z$ of the map $g:A\rightarrow C$ is metrizable, then $Z$ is an ANR.\end{theorem}

In this situation, we take as $B$ a finite connected union of translates of $\widehat{X}_0$ and $\widehat{Y}_0$, as $C$ another translate of $\widehat{X}_0$ or $\widehat{Y}_0$ which is to be connected to $B$, and as $A$ the single point in $B$ at which $C$ is to be attached.  The map $g:A\rightarrow C$ is the obvious one, and the adjunction space $Z$ is the disjoint union of $B$ and $C$ modulo the equivalence relation which identifies the single point in $A$ to its image under $g$.  It is clear that the spaces $A$, $B$, and $C$ are ANR's and that $Z$ is metrizable, so the theorem applies.
\Par
Lemma \ref{K} implies that $Z_{\epsilon}$ $2\epsilon$-dominates $\overline{W}$.  Therefore, Theorem \ref{hanner} applies, and $\overline{W}$ is an ANR.\end{proof}

\begin{corollary}\label{WbarER}$\overline{W}$ is an ER.\end{corollary}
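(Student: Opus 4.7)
The plan is to apply Fact \ref{ARER}, which reduces the task to showing that $\overline{W}$ is a finite-dimensional contractible ANR. The ANR property is precisely Proposition \ref{WbarANR}, so only finite-dimensionality and contractibility remain.

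For finite-dimensionality I will invoke Theorem \ref{engelking} using the ``projection'' map $\psi:\overline{W}\to Z_{\epsilon}$ defined just before Lemma \ref{K}. The key observation is that $\psi$ is an $\epsilon$-mapping: its fiber over a point of $Z_{\epsilon}$ which is not of the form $w|_{m(w)+1}x_{0}$ with $w\notin M_{\epsilon}$ is a singleton, while the fiber over such a point consists of that point together with every point of every translate indexed by an extension of $w|_{m(w)+1}$. By the very property used to define $Z_{\epsilon}$, this ``subtree'' has diameter less than $\epsilon$, so $\text{diam}_{d}\psi^{-1}(\{p\})<\epsilon$ for every $p\in Z_{\epsilon}$. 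On the other hand, $Z_{\epsilon}$ is a finite tree-like union of closed copies of the compact ERs $\widehat{X}$ and $\widehat{Y}$, each of which is finite-dimensional, so the finite sum theorem of dimension theory bounds $\dim Z_{\epsilon}$ by $n:=\max\{\dim\widehat{X},\dim\widehat{Y}\}$ independently of $\epsilon$. Theorem \ref{engelking} then forces $\dim\overline{W}\le n$.

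For contractibility I will first show that $Z_{\epsilon}$ itself is contractible, by induction on the number of translates of $\widehat{X}_{0}$ and $\widehat{Y}_{0}$ used in its construction. The base case is a single translate, which is a compact ER and hence contractible by Fact \ref{ARER}. For the inductive step, one uses Fact \ref{sdr} to produce a strong deformation retraction of a newly attached ER to its unique attaching point, extends this by the previously constructed strong deformation retraction of the earlier piece to that same point, and checks continuity by the pasting lemma since the two subspaces are closed and agree where they meet. A contraction of $\overline{W}$ to the basepoint $x_{0}$ is then obtained by concatenating the homotopy $K$ from Lemma \ref{K} (which deforms $\text{id}_{\overline{W}}$ into $\phi\circ\psi$, whose image lies in $Z_{\epsilon}$) with the contraction of $Z_{\epsilon}$, viewed inside $\overline{W}$ via the inclusion $\phi$.

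I do not anticipate any serious obstacle: given Propositions \ref{Wbarcpt} and \ref{WbarANR} together with Lemma \ref{K}, both the fiber-diameter estimate for $\psi$ and the inductive contraction of $Z_{\epsilon}$ follow directly from structure already built into the construction of $\overline{W}$.
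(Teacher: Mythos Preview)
Your proposal is correct and follows essentially the same route as the paper: reduce via Fact~\ref{ARER} to finite-dimensionality and contractibility, obtain the former from Theorem~\ref{engelking} by checking that the projection to $Z_{\epsilon}$ is an $\epsilon$-mapping into a space of dimension at most $\max\{\dim\widehat{X},\dim\widehat{Y}\}$, and obtain the latter from the contractibility of $Z_{\epsilon}$ together with the homotopy $K$. The only cosmetic differences are that the paper phrases the $\epsilon$-mapping as $K_{1}=\phi\circ\psi$ (the same map), works with $Z_{\epsilon/2}$ rather than $Z_{\epsilon}$, and simply asserts that $Z_{\epsilon}$ is contractible rather than spelling out the induction you give.
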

\begin{proof}  By Proposition \ref{WbarANR} and Fact \ref{ARER}, it suffices to show that $\overline{W}$ is finite dimensional and contractible.
\Par
Fix $\epsilon>0$.  Lemma \ref{K} shows that $\overline{W}$ is $\epsilon$-dominated by a compact metric space $Z_{\frac{\epsilon}{2}}$ whose dimension is bounded above by the maximum of the dimensions of $\widehat{X}$ and $\widehat{Y}$.  We claim that the map $K_1:\overline{W}\rightarrow Z_{\frac{\epsilon}{2}}$, where $K:\overline{W}\times[0,1]\rightarrow\overline{W}$ is the $\frac{\epsilon}{2}$-homotopy given by Lemma \ref{K}, is an $\epsilon$-mapping:
\Par
For each $z\in Z_{\frac{\epsilon}{2}}$, either $K_1^{-1}(\left\{z\right\})=\left\{z\right\}$ (in which case, it is certainly true that $\text{diam}_dK_1^{-1}(\left\{z\right\})=0<\epsilon$), or $z=wx_0$ for some $w\in G\ast H$ satisfying $m(w)=\left|w\right|-1$.  In this second case, we have $K_1^{-1}(\left\{z\right\})= \mathcal{B}_w$, where
\begin{equation*}\mathcal{B}_w:=\biggl(\bigcup_{w'|_{|w|}=w}w'\widehat{X}_0\biggr)\bigcup\biggl(\bigcup_{w'|_{|w|}=w}w'\widehat{Y_0}\biggr)\bigcup\biggl\{\left\{w_ix_0\right\}_{i=1}^{\infty}\in\mathcal{A}\ |\ (w_i)|_{|w|}=w\biggr\},\end{equation*}

consists of all the branches coming off of (and including) $w\widehat{X}_0$ (or $w\widehat{Y}_0$).  Then $\text{diam}_dK_1^{-1}(\left\{z\right\})=\text{diam}_d\mathcal{B}_w\leq\frac{\epsilon}{2}<\epsilon$ by definition of $Z_{\frac{\epsilon}{2}}$.
\Par
Hence we have, for each $\epsilon>0$, an $\epsilon$-mapping of $\overline{W}$ to a compact metric space $Z_{\frac{\epsilon}{2}}$ with $\dim Z_{\frac{\epsilon}{2}}\leq\max{\dim\widehat{X},\dim\widehat{Y}}$.  Therefore Theorem \ref{engelking} applies, and $\overline{W}$ is finite-dimensional.
\Par
Moreover, $\overline{W}$ is contractible, since it is homotopy equivalent to the contractible $Z_{\epsilon}$.
\Par
Therefore, $\overline{W}$ is an ER.
\end{proof}
\begin{proposition}\label{freeprodZset}  $\partial W$ is a $\mathcal{Z}$-set in $\overline{W}$.\end{proposition}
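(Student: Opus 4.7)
The plan is to construct a $\mathcal{Z}$-set homotopy $H\colon\overline{W}\times[0,1]\to\overline{W}$ as a composition $H(z,t):=G(K(z,t),t)$, where the two factors handle the three kinds of boundary points: the within-translate boundaries $w\partial X_0$ and $w\partial Y_0$ (types (i) and (ii)) and the infinite-word boundary $\mathcal{A}$ (type (iii)).

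For the factor $K$, I would take the homotopy supplied by Lemma~\ref{K} applied with any fixed $\epsilon>0$ (say $\epsilon=\tfrac{1}{2}$), arranged so that $K_t$ fixes $Z_\epsilon$ pointwise for all $t$. Its key feature, property (ii), is that $K_t$ carries every point of $\overline{W}\setminus Z_\epsilon$ into $W$ for $t>0$; this takes care of every point of $\mathcal{A}$ (which lies outside every finite subcomplex) together with every type (i)/(ii) boundary point lying outside $Z_\epsilon$.

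For the factor $G\colon\overline{W}\times[0,1]\to\overline{W}$, I would glue together, on each translate, the rescaled slowed $\mathcal{Z}$-set homotopies $F$ and $J$ given by Lemma~\ref{slowedsdr}. On $w\widehat{X}_0$, $G_t$ is $F^w_t$ (rescaled by $r^{*}(w)$); on $w\widehat{Y}_0$, it is the analogous $J^w_t$. The slowing property makes each rescaled homotopy the identity on a large interior subcompactum for small $t$, which allows adjacent translates' homotopies to agree at their shared gluing points; and because the diameters $r^{*}(w_i)$ along any infinite word path tend to zero, $G$ extends continuously to $\mathcal{A}$, where it necessarily acts as the identity. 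By the defining property of a $\mathcal{Z}$-set homotopy, $G_t(w\widehat{X}_0)\subseteq wX_0\subseteq W$ for every $t>0$. Composing: for $t>0$ and $z\in Z_\epsilon$ we have $K_t(z)=z$, and $G_t(z)\in W$; for $t>0$ and $z\notin Z_\epsilon$ we have $K_t(z)\in W$, and $G_t$ preserves $W$ because it maps each translate into its interior. In either case $H(z,t)\in W$; and $H_0=\mathrm{id}$ follows from $K_0=G_0=\mathrm{id}$.

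I expect the main obstacle to be the careful construction of $G$. On each translate $w\widehat{X}_0$ the gluing points $wgx_0$ (for $g\in G$) accumulate on $w\partial X_0$ by the null condition, so no single choice of $F^w$ can fix all of them for all $t$ simultaneously. The resolution is to choose the exhaustion/time data $\{C_i,t_i\}$ of Lemma~\ref{slowedsdr} so that progressively more gluing points on each translate are fixed for progressively shorter initial intervals of $t$, and to coordinate these choices across translates via the scaling factor $r^{*}(w)$. Once this bookkeeping is done, continuity of $G$ at points of $\mathcal{A}$ is automatic from $r^{*}(w_i)\to 0$, and continuity of $H$ follows immediately from that of $G$ and $K$.
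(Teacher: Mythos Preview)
Your proposal has a genuine gap: the factor $G$ you describe cannot be made continuous on $\overline{W}\times[0,1]$. Consider the gluing point $hy_0=hx_0$ between $\widehat{Y}_0$ and $h\widehat{X}_0$ for $h\in H$. On the $h\widehat{X}_0$ side this point corresponds to the translate's base point $x_0$, which any version of $F$ fixes for all $t$. On the $\widehat{Y}_0$ side it is the non-base point $hy_0\in Y_0$; the slowed homotopy $J$ fixes it only for $t\le 1/2^{r(h)}$, and for larger $t$ it drags $hy_0$ toward $y_0$. No rescaling of the time parameter on $h\widehat{X}_0$ can repair this, because the mismatch occurs on the $\widehat{Y}_0$ side. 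More to the point, if $G_t|_{\widehat{Y}_0}$ is to push $\widehat{Y}_0$ off $\partial Y_0$ for every $t>0$, then points arbitrarily near $\partial Y_0$ must move; but the gluing points $hy_0$ accumulate on $\partial Y_0$, and on the far side of each such point your $G_t$ is the identity. By continuity this would force $G_t$ to fix all of $\partial Y_0$, contradicting the very property you need. Your final paragraph anticipates a difficulty but the proposed ``resolution'' (fixing more gluing points for shorter initial intervals) only addresses small $t$; it does nothing for $t$ beyond those intervals.

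The paper sidesteps this by using a \emph{concatenation in time} rather than a composition in space. It takes $Z_\epsilon=\widehat{X}_0\cup\widehat{Y}_0$, where the sole gluing point is $x_0=y_0$, fixed by both $F$ and $J$; so on $Z_\epsilon$ one may simply set $P_t=F_t$ on $\widehat{X}_0$ and $P_t=J_t$ on $\widehat{Y}_0$. On a branch $\mathcal{B}_h$ attached at $hy_0\in\widehat{Y}_0$, the homotopy first runs $K$ sped up by the factor $2^{r(h)}$ on the interval $[0,1/2^{r(h)}]$, collapsing the entire branch to $hy_0$, and \emph{then} follows $t\mapsto J_t(hy_0)$ on $[1/2^{r(h)},1]$. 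The slowing property of $J$ makes $J_t(hy_0)=hy_0$ throughout $[0,1/2^{r(h)}]$, so the two stages match at the switchover and also agree with $P_t|_{\widehat{Y}_0}$ at the attaching point. This is not of the form $G(K(\cdot,t),t)$: the branch collapse and the base homotopy are sequenced, not overlaid.
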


\begin{proof}  We must construct a homotopy $P:\overline{W}\times[0,1]\rightarrow \overline{W}$ with the property that $P_0\equiv \text{id}_{\overline{W}}$ and $P_t(\overline{W})\subseteq W$ for all $t>0$.
\Par
Recall that, given any $\epsilon>0$ and a space $Z_{\epsilon}\subseteq\overline{W}$ with the property that branches outside of $Z_{\epsilon}$ have diameter smaller than $\epsilon$, Lemma \ref{K} gives a $2\epsilon$-homotopy $K:\overline{W}\times[0,1]\rightarrow\overline{W}$ which satisfies $K_t(\overline{W}\backslash\partial W)=\emptyset$ for any $t>0$.  This homotopy, of course, depends on both $\epsilon$ and the choice of the space $Z_{\epsilon}$.
\Par
To build the $\mathcal{Z}$-set homotopy $P$, we first fix $\epsilon=1$ and $Z_{\epsilon}=\widehat{X}_0\cup\widehat{Y}_0$.  Then we let $K$ be the homotopy given by Lemma \ref{K} with these choices in place.  Now we have $K_0\equiv\text{id}_{\overline{W}}$ and $K_1(\overline{W})\subseteq Z_{\epsilon}=\widehat{X}_0\cup\widehat{Y}_0$.
\Par
Observe that for each $x\in\overline{W}$, either $x\in\widehat{X}_0\cup\widehat{Y_0}$ or there exists a unique $g\in G$ (or $h\in H$) such that $x\in \mathcal{B}_g$ (or $x\in \mathcal{B}_h$), where 
\begin{equation*}\mathcal{B}_g:=\biggl(\bigcup_{w'|_1=g}w'\widehat{X}_0\biggr)\bigcup\biggl(\bigcup_{w'|_1=g}w'\widehat{Y_0}\biggr)\bigcup\biggl\{\left\{w_ix_0\right\}_{i=1}^{\infty}\in\mathcal{A}\ |\ (w_i)|_1=g\biggr\}\end{equation*}
and, similarly,
\begin{equation*}\mathcal{B}_h:=\biggl(\bigcup_{w'|_1=h}w'\widehat{X}_0\biggr)\bigcup\biggl(\bigcup_{w'|_1=h}w'\widehat{Y_0}\biggr)\bigcup\biggl\{\left\{w_ix_0\right\}_{i=1}^{\infty}\in\mathcal{A}\ |\ (w_i)|_1=h\biggr\},\end{equation*}

Now we define
\begin{equation*} P(x,t):=\left\{\begin{array}{l l}F(x,t) & \text{ for any } t\in[0,1] \text{ if } x\in\widehat{X}_0
\\
J(x,t) & \text{ for any } t\in[0,1] \text{ if } x\in\widehat{Y}_0
\\
K(x,2^{r(h)}\cdot t) & \text{ if } x\in\mathcal{B}_h \text { and } t\in[0,2^{r(h)}]
\\
K(x,2^{r(g)}\cdot t) & \text{ if } x\in\mathcal{B}_g \text { and } t\in[0,2^{r(g)}]
\\
F(x,t) & \text{ if } x\in\mathcal{B}_h \text{ and } t\in[2^{r(h)},1]
\\
J(x,t) & \text{ if } x\in\mathcal{B}_g \text{ and } t\in[2^{r(g)},1]\end{array}\right.\end{equation*}

where $r:G\cup H\rightarrow [0,1]$ is as defined at the beginning of the chapter.
\Par
That $P$ is continuous follows from the pasting lemma for continuous functions, and the properties of $F$, $J$, and $K$ imply that $P$ has the desired attributes.
\end{proof}

\begin{theorem}\label{zstructurethmfreeprod}If both $G$ and $H$ admit $\mathcal{Z}$-structures, then so does $G\ast H$.\end{theorem}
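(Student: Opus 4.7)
The plan is to verify conditions (1)--(4) of Definition \ref{zstructure} for the candidate $\mathcal{Z}$-structure $(\overline{W},\partial W)$, with $G\ast H$ acting on $W=\overline{W}\setminus\partial W$ as in Remark \ref{remarksW}(ii). Conditions (1) and (2) assemble immediately from the preceding work: Proposition \ref{Wbarcpt} and Corollary \ref{WbarER} say that $\overline{W}$ is a compact ER, and Proposition \ref{freeprodZset} says that $\partial W$ is a $\mathcal{Z}$-set in $\overline{W}$, so $\overline{W}$ is a $\mathcal{Z}$-compactification of $W$. Thus the substantive remaining work concerns the action.

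For condition (3), I would argue cocompactness and properness separately. Since $G$ acts cocompactly on $X$, there exists a compactum $K_X\subseteq X_0$ with $G\cdot K_X=X_0$, and similarly a compactum $K_Y\subseteq Y_0$ with $H\cdot K_Y=Y_0$; then $K:=K_X\cup K_Y$ is a compact subset of $W$ and every point of a translate $wX_0$ or $wY_0$ has the form $(wg)\cdot k$ or $(wh)\cdot k$ for some $k\in K$, establishing $(G\ast H)\cdot K=W$. For properness, a point $z$ lying in the interior of a translate $wX_0$ (away from the identification points $wgx_0$) has a neighborhood $U\subseteq wX_0$ disjoint from the other translates; the only $u\in G\ast H$ with $u\cdot U\cap U\neq\emptyset$ are those stabilizing the translate $wX_0$ setwise, which is the conjugate subgroup $wGw^{-1}$, and properness of the $G$-action on $X$ controls how many of these translate $U$ back into itself. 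A symmetric argument handles points in some $wY_0$, and points at identifications $wx_0=wy_0$ are handled by choosing a neighborhood meeting only one translate of each factor.

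For condition (4), let $C\subseteq W$ be compact and $\mathcal{U}$ an open cover of $\overline{W}$. By the Lebesgue number lemma there is $\delta>0$ such that every subset of $\overline{W}$ of diameter $<\delta$ lies in a single element of $\mathcal{U}$. Because distinct translates of $X_0,Y_0$ meet only at single identification points and $r^{\ast}(w)\to 0$ as $\left|w\right|\to\infty$, the compactum $C$ meets only finitely many translates, say $C\subseteq\bigcup_{i=1}^{N}w_iX_0\cup\bigcup_{j=1}^{M}w'_jY_0$. For $u\in G\ast H$ the translate $uC$ is contained in $\bigcup_i(uw_i)X_0\cup\bigcup_j(uw'_j)Y_0$. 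Because reduction in $G\ast H$ can erase at most $\max_i|w_i|\vee\max_j|w'_j|$ letters from any product $uw_i$ or $uw'_j$, all these reduced words share a common initial prefix of length at least $|u|-\max(|w_i|,|w'_j|)$. The subset of $\overline{W}$ consisting of all translates whose word-index extends this common prefix has diameter bounded by $2r^{\ast}$ of the prefix (by the geometric-series estimate used in the proof of Proposition \ref{Wbarcpt}), which tends to $0$ as $|u|\to\infty$. Thus for all but finitely many $u$, $\operatorname{diam}_d(uC)<\delta$ and so $uC$ lies in a single element of $\mathcal{U}$.

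The main obstacle is the bookkeeping in the last paragraph: one must confirm that for any fixed finite collection $\{w_i\},\{w'_j\}$, the reduced forms of $uw_i,uw'_j$ share arbitrarily long common prefixes as $|u|\to\infty$, and then convert ``sharing a long prefix'' into a small-diameter statement using the multiplicative shrinking built into $r^{\ast}$. Everything else (cocompactness, properness, and the structural conditions) either follows from the preceding propositions or from routine verifications using the tree-like structure of $W$.
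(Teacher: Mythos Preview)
Your treatment of conditions (1)--(3) is fine and matches the paper. The gap is in your argument for the null condition (4).

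You correctly show that when $|u|$ is large, the reduced words $uw_i,uw'_j$ share a long common prefix $v$, so $uC\subseteq\mathcal{B}_v$ and $\text{diam}_d(uC)$ is controlled by $r^{\ast}(v)\le 2^{-|v|}$. But ``$|u|\to\infty$'' is not the same as ``all but finitely many $u$'': when $G$ (or $H$) is infinite there are infinitely many $u$ of bounded word length---for instance every $u=g\in G$ has $|u|=1$---and for these your prefix bound is vacuous.

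Concretely, take $C\subseteq X_0\cup Y_0$ and $u=g\in G$. Then $uC\subseteq X_0\cup gY_0$; the relevant word-indices are $\textbf{1}$ and $g$, whose common prefix is empty. To see that $\text{diam}_d(gC)$ is nonetheless small for cofinitely many $g$ you must know two things: that $\text{diam}_d(gY_0)=r^{\ast}(g)$ is small (this follows from properness of the $G$-action, since $gx_0$ is eventually $\rho$-close to $\partial X$), and---crucially---that $\text{diam}_{\rho}\bigl(g(C\cap X_0)\bigr)$ is small inside $X_0$. The latter is precisely the null condition for the given $\mathcal{Z}$-structure $(\widehat{X},\partial X)$, which you never invoke. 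The paper's (brief) proof explicitly cites both ingredients: the null conditions on the factors, \emph{and} the fact that only finitely many translates of $\widehat{X}_0,\widehat{Y}_0$ have $d$-diameter exceeding a given $\epsilon$. Your prefix argument supplies a version of the second but omits the first, so the proof as written is incomplete.
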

\begin{proof}First, it is clear that $G\ast H$ acts cocompactly on $W$, since if the translates of $C$ and $D$ under the actions of $G$ and $H$ cover $X$ and $Y$, respectively, then the translates of $C\cup D$ under the action of $G\ast H$ cover $W$.  Moreover, given a translate $Z$ of $X_0$ or $Y_0$, each element of $G\ast H$ either fixes $Z$ (in which case, the action is proper) or moves $Z$ completely off itself, so that the action of $G\ast H$ on $W$ is also proper.  Therefore $(\overline{W},\partial W)$ satisfies condition (3) of Definition \ref{zstructure}.
\Par
Propositions \ref{Wbarcpt} and \ref{freeprodZset}, and Corollary \ref{WbarER} prove that conditions (1) and (2) are satisfied by the pair $(\overline{W},\partial W)$.
\Par
It remains only to show that $\overline{W}$ satisfies the null condition with respect to the action of $G\ast H$ on $W$.  This follows directly from the facts each of the original actions have this property and that for any $\epsilon>0$, there are only finitely many translates of $\widehat{X}_0$ and $\widehat{Y}_0$ with $d$-diameter more than $\epsilon$.  Hence condition (4) of Definition \ref{zstructure} is also satisfied.
\Par
Therefore $(\overline{W},\partial W)$ is a $\mathcal{Z}$-structure on $G\ast H$.
\end{proof}

\begin{theorem} \label{ezstructurefreeprod} If $G$ and $H$ each admit $\mathcal{EZ}$-structures, then so does $G\ast H$.\end{theorem}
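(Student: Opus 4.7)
The plan is to invoke Theorem \ref{zstructurethmfreeprod}, which already supplies $(\overline{W},\partial W)$ as a $\mathcal{Z}$-structure on $G\ast H$, so the only additional task is to extend the given $G\ast H$-action on $W$ to an action on $\overline{W}$ by homeomorphisms. The $\mathcal{EZ}$-hypothesis extends the $G$-action to all of $\widehat{X}$ and the $H$-action to all of $\widehat{Y}$, and these extensions are precisely the tool needed to define the action of $G \ast H$ on the boundary points of each translate.

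For each $w'\in G\ast H$, define $L_{w'}:\overline{W}\to\overline{W}$ as follows. On each translate $v\widehat{X}_0$ or $v\widehat{Y}_0$ (including its boundary $v\partial X_0$ or $v\partial Y_0$, made meaningful via the $\widehat{X}$- and $\widehat{Y}$-actions), send $v\cdot\hat{x}$ to $(w'v)\cdot\hat{x}$, where $w'v$ is computed by free-product reduction; when letters of $w'$ cancel with or combine with the initial letters of $v$, the extended factor action on $\widehat{X}$ or $\widehat{Y}$ is what gives meaning to the assignment on points of the attached boundary spheres. On the type-(iii) boundary $\mathcal{A}$, send $\{w_i x_0\}$ to $\{(w'w_i) x_0\}$; for $i$ larger than some $N$ depending only on $w'$ and the given sequence, the amount of cancellation at the junction of $w'$ with $w_i$ stabilizes, so the image (after trimming the first $N$ terms) is again a valid element of $\mathcal{A}$. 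The assignment respects the gluing identification $wx_0\sim wy_0$ and is therefore well-defined, and $L_{(w')^{-1}}$ visibly inverts $L_{w'}$.

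The main technical step is continuity of $L_{w'}$. On any single translate $L_{w'}$ is the composition of a metric rescaling by a bounded factor (depending only on $w'$ and the cancellation pattern) with a homeomorphism coming from the hypothesized action on $\widehat{X}$ or $\widehat{Y}$; gluing points are mapped to gluing points, so continuity at all interior points of $W$ follows. At a boundary point $\alpha\in w\partial X_0$ or $w\partial Y_0$, sequences approaching $\alpha$ either lie eventually in $w\widehat{X}_0$ or $w\widehat{Y}_0$, in which case their images converge by continuity on that translate, or they live on branches attached beyond $\alpha$ whose $d$-diameters tend to zero; these branches map to branches of the image whose diameters are rescaled by a bounded factor and so still shrink. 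At a type-(iii) point $\alpha=\{w_i x_0\}$, balls centered at the deep branch points $w_i x_0$ form a neighborhood basis, and the analogous balls centered at $(w'w_i) x_0$ form a neighborhood basis of $L_{w'}(\alpha)$. The main obstacle throughout is bookkeeping for free-product reduction under the metric $d$; but since $w'$ has fixed finite length, reduction only alters an initial segment of each word of bounded size, so the resulting distortion of the $r^{\ast}$-values is uniformly bounded in terms of $w'$. Once each $L_{w'}$ is verified to be a homeomorphism, the assignment $w'\mapsto L_{w'}$ is automatically a homomorphism extending the original action, completing the proof that $(\overline{W},\partial W)$ is an $\mathcal{EZ}$-structure on $G\ast H$.
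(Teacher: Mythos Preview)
Your approach matches the paper's exactly: invoke Theorem~\ref{zstructurethmfreeprod} and extend the action to $\partial W$ by using the extended factor actions on $\widehat{X}$ and $\widehat{Y}$ for type-(i) and type-(ii) points, and the formula $w'\cdot\{w_ix_0\}:=\{(w'w_i)x_0\}$ for type-(iii). The paper in fact stops immediately after defining the extension and never discusses continuity, so your write-up is strictly more detailed.

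One correction to your continuity argument: the claim that branch diameters are rescaled by a factor \emph{uniformly bounded in terms of $w'$} is not justified and is likely false in general. If, say, $w'=g_1\in G$ and a branch is attached at $g_2x_0\in X_0$, the relevant ratio is $r^*(g_1g_2)/r^*(g_2)=2^{\,r(g_2)-r(g_1g_2)}$, and a homeomorphism of the compact space $\widehat{X}$ need not be bi-Lipschitz relative to $\partial X$, so this exponent need not be bounded as $g_2$ varies. What you actually need is only that the image-branch diameters \emph{tend to zero} along any sequence of attachment points approaching a boundary point, and this does follow from the $\mathcal{EZ}$-hypothesis: if $g_nx_0\to\overline{x}\in\partial X$ then $g_1g_nx_0\to g_1\overline{x}\in\partial X$ (since $g_1$ acts by a homeomorphism of $\widehat{X}$ preserving $\partial X$), hence $r(g_1g_n)\to\infty$ and the image branches shrink. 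With that adjustment your argument is complete.
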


\begin{proof}  We show that $(\overline{W},\partial W)$, as defined in the proof of Theorem \ref{zstructurethmfreeprod}, satisfies the axioms for an $\mathcal{EZ}$-structure.  By Theorem \ref{zstructurethmfreeprod}, it remains only to show that the action of $G\ast H$ on $W$ extends to an action on $\overline{W}$.
\Par
Recall that a point $\alpha\in\partial W$ has one of three types:  (i)  $\alpha\in w\partial X_0$, (ii)  $\alpha\in w\partial Y_0$, or (iii)  $\alpha\in\mathcal{A}$, where

\begin{equation*}\mathcal{A}=\Bigl\{\overline{\alpha}\ |\ \overline{\alpha}=\left\{\alpha_i\right\}_{i=1}^{\infty}\ ,\ \alpha_i=w_ix_0\ , \ w_i\in G\ast H\ , \ \left|w_i\right|=i\ ,\ w_{i+1}|_i=w_i\ \forall i\in\mathbb{N}\Bigr\}\end{equation*}

Under the assumption that the actions of $G$ and $H$ extend to actions on $\widehat{X}_0$ and $\widehat{Y_0}$, the action of $G\ast H$ on $W$ extends to points of $\partial W$ having type (i) and (ii) in the obvious way.
\Par
For a point $\alpha=\left\{\alpha_i\right\}_{i=1}^{\infty}\in\mathcal{A}$, let $w\cdot\alpha:=\left\{w\cdot\alpha_i\right\}_{i=1}^{\infty}\in\mathcal{A}$, and the theorem is proved.
\end{proof}

We conclude the chapter with the proof of Lemma \ref{K}:
\Par
\textit{Proof of Lemma \ref{K}.}  For a given word $w$, $j(w):=|w|-m(w)$ indicates in some sense how ``far'' $w\widehat{X}_0$ (or $w\widehat{Y_0}$) is projected by $\psi$.
\Par
Recall the function $r:G\cup H\rightarrow \mathbb{N}$ defined earlier in the chapter by
\\
\tab $r(g)=n\ \Longleftrightarrow \ gx_0\in B_{\rho}(\partial X,\frac{1}{2^{n-1}})\backslash B_{\rho}(\partial{X},\frac{1}{2^n})$
\\
\tab $r(h)=n\ \Longleftrightarrow\  hy_0\in B_{\tau}(\partial Y,\frac{1}{2^{n-1}})\backslash B_{\tau}(\partial{Y},\frac{1}{2^n})$.
\Par

Recall that $F:\widehat{X}\times[0,1]\rightarrow\widehat{X}$ and $J:\widehat{Y}\times[0,1]\rightarrow\widehat{X}$ satisfy
\begin{center} $F_t|_{\widehat{X}\backslash B_{\rho}(\partial X,\frac{1}{2^k})}\equiv \text{id}_{\widehat{X}\backslash B_{\rho}(\partial X,\frac{1}{2^k})}\text{ if } t\in[0,\frac{1}{2^k}]$\end{center}

and

\begin{center} $J_t|_{\widehat{Y}\backslash B_{\tau}(\partial Y,\frac{1}{2^k})}\equiv \text{id}_{\widehat{Y}\backslash B_{\tau}(\partial Y,\frac{1}{2^k})}\text{ if } t\in[0,\frac{1}{2^k}]$\end{center}

This implies that each point in $X$ [resp. $Y$] remains fixed under the homotopy $F$ [resp. $J$] on a pre-determined interval around $t=0$.  In particular, for any $g\in G$, we have $F\left(\left\{gx_0\right\}\times\left[0,\frac{1}{2^{r(g)}}\right]\right)=\left\{gx_0\right\}$, and similarly for $h\in H$.  We use this fact to define a homotopy $K:\overline{W}\times[0,1]\rightarrow\overline{W}$ from $\text{id}_{\overline{W}}$ to $\phi\circ\psi$ by concatenating translates of $F$ and $J$ in such a way that two translated homotopies agree when they intersect at a gluing point and the entire ``branch'' of $\overline{W}$ coming off of any given gluing point is pulled in by $K$ during the time that the gluing point remains fixed.  This systematic concatenation of the $\mathcal{Z}$-set homotopies allows the definition of $K$ to be extended to points of $\mathcal{A}$.  Here we give an inductive definition for $K$, and, in hopes of simplifying the ideas used, we give a figure below illustrating an example of its execution on a specific branch of $\overline{W}$.
\Par
We first define $K^0:Z_{\epsilon}\times[0,1]\rightarrow \overline{W}$ by $K^0(z,t):=z$ for all $(z,t)\in Z_{\epsilon}\times[0,1]$.
\Par
To define $K$ on the rest of $\overline{W}\backslash\mathcal{A}$, first note that $z\in\overline{W}\backslash \left(\mathcal{A}\cup Z_{\epsilon}\right)\ \Longrightarrow z\in w\widehat{X}_0$ or $z\in w\widehat{Y_0}$, where $j(w)\in\mathbb{N}$.
\Par
To each $w\notin M_{\epsilon}$ with $j(w)\geq 2$, we associate a number $t(w)=\prod\limits_{i=1}^{j(w)-1}\frac{1}{2^{r(w(|w|-(i-1)))}}\in(0,1)$.  (The entire branch coming off the gluing point $wx_0$ will be pulled in by $K$ to $wx_0$ on the interval $[0,t(w)]$.)
\Par
Define $\displaystyle Q_n:=\Bigl(\bigcup_{j(w)=n}w\widehat{X}_0\Bigr)\bigcup\Bigl(\bigcup_{j(w)=n}w\widehat{Y}_0\Bigr)$ and $\displaystyle Q^n:=Z_{\epsilon}\cup\left(\bigcup_{i=1}^nQ_n\right)$ for each $n\in\mathbb{N}$.
\Par
We will use induction on $n$ to define a homotopy $K^n:Q^n\times[0,1]\rightarrow\overline{W}$ and set $K:=\bigcup_{n=0}^{\infty}K^n:\overline{W}\backslash\mathcal{A}$.  Then we will extend $K$ to $\mathcal{A}$ by taking appropriate limits.
\Par
First let $K_1:Q_1\times[0,1]\rightarrow\overline{W}$ be defined by

\begin{equation*} K_1(z,t):=\left\{\begin{array}{l l}wF(z,t) & \text{if } z\in w\widehat{X}_0 \text{ with }j(w)=1
\\
wJ(z,t) & \text{if }z\in w\widehat{Y}_0 \text{ with } j(w)=1\end{array}\right.\end{equation*}

and set $K^1:=K^0\cup K_1:Q^1\times[0,1]\rightarrow\overline{W}$.
\Par
We show that $K^0$ and $K_1$ agree on the intersection $Z_{\epsilon}\cap Q_1$ and conclude that $K^1$ is continuous:
\Par
Note that $Z_{\epsilon}\cap Q_1=\left\{wx_0\ |\ j(w)=1\right\}$.  For any such $wx_0\in Z_{\epsilon}\cap Q_1$, either $w(|w|)\in G$ or $w(|w|)\in H$; assume without loss of generality that $w(|w|)\in G$.  Then $wx_0=wy_0\in Z_{\epsilon}\cap w\widehat{Y}_0$, and $K_1(wx_0,t)=K_1(wy_0,t)=wJ(wy_0,t)=wy_0$ for all $t\in[0,1]$ since $J$ is a strong deformation retraction.  Thus $K_1(wx_0,t)=K^0(wx_0,t)$ for all $t\in[0,1]$.
\Par
Next we define $K_2:Q_2\times[0,1]\rightarrow\overline{W}$ by

\begin{equation*} K_2(z,t):=\left\{\begin{array}{l l}wF(z,\frac{t}{t(w)}) & \text{if } z\in w\widehat{X}_0 \text{ with }j(w)=2\text{ and }t\in[0,t(w)]
\\
K^1(wy_0,t)& \text{if } z\in w\widehat{X}_0 \text{ with }j(w)=2\text{ and }t\in[t(w),1]
\\
wJ(z,t) & \text{if }z\in w\widehat{Y}_0 \text{ with } j(w)=2
\\
K^1(wx_0,t)& \text{if } z\in w\widehat{Y}_0 \text{ with }j(w)=2\text{ and }t\in[t(w),1]\end{array}\right.\end{equation*}

and set $K^2:=K^1\cup K_2:Q^2\times[0,1]\rightarrow\overline{W}$.
\Par
We again show that $K^1$ and $K_2$ agree on the intersection $Q^1\cap Q_2=\left\{wx_0\ |\ j(w)=2\right\}$ to conclude that $K^2$ is continuous:
\Par
Given $wx_0$ with $j(w)=2$, assume without loss of generality that $w(|w|)=g\in G$.  Then $wx_0\in w|_{|w|-1}\widehat{X}_0\cap w\widehat{Y}_0\subseteq Q^1\cap Q_2$.
\Par
Then $K^1(wx_0,t)=K_1(wx_0,t)=w|_{|w|-1}F(wx_0,t)$ for all $t\in[0,1]$.  Since $F(gx_0,t)=gx_0$ for all $t\leq\frac{1}{2^{r(g)}}$, then $K^1(wx_0,t)=w|_{|w|-1}F(wx_0,t)=wx_0$ for all $t\leq \frac{1}{2^{r(g)}}=t(w)$.  On the other hand, $K_2(wx_0,t)=wJ(wx_0, \frac{1}{t(w)})=wx_0$ for all $t\leq t(w)$.  Moreover, $K^1(wx_0,t)=K_1(wx_0,t)=K_2(wx_0,t)$ for all $t\geq t(w)$, by definition.  Therefore $K^1$ and $K_2$ agree on $\left\{wx_0\right\}\times[0,1]$ for every $wx_0\in Q^1\cap Q_2$, so $K^2$ is continuous.
\Par
Lastly, we observe that if $j(w)=3$, then $K^2(wx_0,t)=wx_0$ for all $t\leq t(w)$:
\Par
Suppose $j(w)=3$; then $wx_0\in w|_{|w|-1}\widehat{X}_0$ (if $w(|w|)\in G$) or $wx_0\in w|_{|w|-1}\widehat{Y}_0$ (if $w(|w|)\in H$).  Assume, without loss of generality, that $w(|w|)=g\in G$.  Then $K^2(wx_0,t)=K_2(wx_0,t)=w|_{|w|-1}F(wx_0,\frac{t}{t(w|_{|w|-1})}$ for all $t\leq t(w|_{|w|-1})$.  Since $F(gx_0,t)=gx_0$ for all $t\leq \frac{1}{2^{r(g)}}$, then $w|_{|w|-1}F(wx_0,\frac{t}{t(w|_{|w|-1})})=wx_0$ whenever $\frac{t}{t(w|_{|w|-1})}\leq\frac{1}{2^{r(g)}}$, which holds whenever $t\leq t(w|_{|w|-1})\cdot\frac{1}{2^{r(g)}}=t(w)$.
\Par
Continue inductively:  Suppose $K^{n-1}:Q^{n-1}\times[0,1]\rightarrow\overline{W}$ is continuous and satisfies, for each $w\in G\ast H$ with $j(w)\leq n$,
\begin{equation*}K^{n-1}(wx_0,t)=wx_0\text{ for all }t\leq t(w)\end{equation*}

Define $K_n:Q_n\times[0,1]\rightarrow\overline{W}$ by

\begin{equation*} K_n(z,t):=\left\{\begin{array}{l l}wF(z,\frac{t}{t(w)}) & \text{if } z\in w\widehat{X}_0 \text{ with }j(w)=n\text{ and }t\in[0,t(w)]
\\
K^{n-1}(wy_0,t)& \text{if } z\in w\widehat{X}_0 \text{ with }j(w)=n\text{ and }t\in[t(w),1]
\\
wJ(z,t) & \text{if }z\in w\widehat{Y}_0 \text{ with } j(w)=n
\\
K^{n-1}(wx_0,t)& \text{if } z\in w\widehat{Y}_0 \text{ with }j(w)=n\text{ and }t\in[t(w),1]\end{array}\right.\end{equation*}

An identical argument to the above, showing that $K^2$ is continuous, shows that $K^{n}:=K^{n-1}\cup K_n:Q^n\times[0,1]\rightarrow\overline{W}$ is continuous.  Moreover, if $j(w)=n+1$, then, assuming $w(|w|)=g\in G$ and setting $w':=w|_{|w|-1}$, we have $wx_0\in w'\widehat{X}_0$, and $K^n(wx_0,t)=K_n(wx_0,t)=w'F(wx_0,\frac{t}{t(w')})$ for any $t\leq t(w')$.  But $F(gx_0,t)=gx_0$ for all $t\leq\frac{1}{2^{r(g)}}$, so that $w'F(wx_0,\frac{t}{t(w')})=wx_0$ whenever $\frac{t}{t(w')}\leq\frac{1}{2^{r(g)}}$, which occurs for any $t\leq t(w')\cdot\frac{1}{2^{r(g)}}=t(w)$.
\Par
\textit{Example:}   (See Figure \ref{fig:K}.)  Suppose $j(w)=4$, and $w=w'ghg'$, where $w'x_0\in Z_{\epsilon}$, $r(g')=1$, $r(h)=3$, and $r(g)=2$.  Then we have
\begin{eqnarray*}  t(w) & = & \frac{1}{2^{r(g')}}\cdot\frac{1}{2^{r(h)}}\cdot\frac{1}{2^{r(g)}}= \frac{1}{2}\cdot\frac{1}{8}\cdot\frac{1}{4}=\frac{1}{64}
\\
t(w'gh) & = & \frac{1}{2^{r(h)}}\cdot\frac{1}{2^{r(g)}}=\frac{1}{8}\cdot\frac{1}{4}=\frac{1}{32}
\\
t(w'g) & = & \frac{1}{2^{r(g)}}=\frac{1}{4}\end{eqnarray*}

\begin{figure}[!h]
\includegraphics{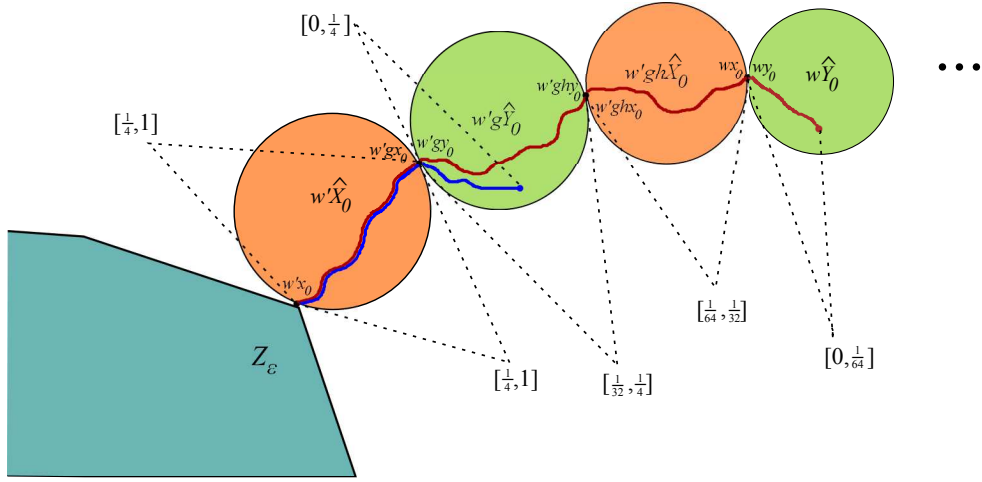}
\caption{The homotopy $K$ on a single branch of $\overline{W}$}
\label{fig:K}\end{figure}

We conclude the construction of $K$ by extending to $\mathcal{A}$:
\Par
Suppose $z=\left\{w_ix_0\right\}_{i=1}^{\infty}\in\mathcal{A}$.  Then $w_ix_0\rightarrow z$ as $i\rightarrow\infty$, and we set, for each $t\in[0,1]$,
\begin{equation*}K(z,t):=\lim\limits_{i\rightarrow\infty}K(w_ix_0,t)\end{equation*}

\noindent Continuity of $K:\overline{W}\times[0,1]\rightarrow\overline{W}$ is implied by the induction argument above and the following simple facts:
\\
\tab $\bullet$  $K\left(w\widehat{X}_0\times\left[0,t(w)\right]\right)\subseteq w\widehat{X}_0$ for any $w\in G\ast H$
\\
\tab $\bullet$  $K\left(w\widehat{Y_0}\times\left[0,t(w)\right]\right)\subseteq w\widehat{Y_0}$ for any $w\in G\ast H$
\\
\tab $\bullet$  $\displaystyle K\left(w\widehat{X}_0\times\left[t(w|_{|w|-(i-1)}),t(w|_{|w|-i})\right]\right)\subseteq$\begin{math}\left\{\begin{array}{l l}w|_{|w|-i}\widehat{Y_0} & \text{ if } 1\leq i\leq j(w)-2 \text{ is odd}
\\
w|_{|w|-i}\widehat{X}_0 & \text{ if } 1\leq i\leq j(w)-2 \text{ is even}\end{array}\right.\end{math}
\\
\tab $\bullet$  $\displaystyle K\left(w\widehat{Y_0}\times\left[t(w|_{|w|-(i-1)}),t(w|_{|w|-i})\right]\right)\subseteq$\begin{math}\left\{\begin{array}{l l}w|_{|w|-i}\widehat{X}_0 & \text{ if } 1\leq i\leq j(w)-2 \text{ is odd}
\\
w|_{|w|-i}\widehat{Y_0} & \text{ if } 1\leq i\leq j(w)-2 \text{ is even}\end{array}\right.\end{math}
\\
\tab $\bullet$  If $w$ satisfies $|w|>m(w)$ (i.e. $j(w)>0$), and
\begin{equation*}\mathcal{B}_w:=\biggl(\bigcup_{w'|_{|w|}=w}w'\widehat{X}_0\biggr)\bigcup\biggl(\bigcup_{w'|_{|w|}=w}w'\widehat{Y_0}\biggr)\bigcup\biggl\{\left\{w_ix_0\right\}_{i=1}^{\infty}\in\mathcal{A}\ |\ (w_i)|_{|w|}=w\biggr\},\end{equation*}

consists of all the branches coming off of (and including) $w\widehat{X}_0$ (or $w\widehat{Y}_0$), then
 \begin{equation*}K\left(\mathcal{B}_w\times\displaystyle\left[0,t(w)\right]\right)\subseteq\mathcal{B}_w\end{equation*}
and
\begin{equation*}K\left(\mathcal{B}_w\times\displaystyle[0,1]\right)\subseteq\mathcal{B}_{w|_{m(w)+1}}\end{equation*}

The properties of $Z_{\epsilon}$ imply that $\text{diam}_d\mathcal{B}_w\leq\epsilon<2\epsilon$ whenever $j(w)>0$, so that $\text{diam}_d\left(K\left(\left\{z\right\}\times[0,1]\right)\right)<2\epsilon$ for any $z\in\overline{W}$.
\Par
Hence, $K$ is a $2\epsilon$-homotopy between $\text{id}_{\overline{W}}$ and $\phi\circ\psi$.
\Par
Moreover, $K_t(\overline{W}\backslash Z_{\epsilon})\cap\partial W=\emptyset$ for all $t>0$ since $F$ and $J$ are $\mathcal{Z}$-set homotopies, and due to the limit definition of $K$ at points of $\mathcal{A}$.
\begin{flushright}$\blacksquare$\end{flushright}
\section{$\mathcal{Z}$- and $\mathcal{EZ}$-Structures on Direct Products of Groups}
\pagestyle{headings} \thispagestyle{headings}
\vspace{.2in}

\begin{fact}  \label{zcompprod}The product $\widehat{X}\times\widehat{Y}$ of $\mathcal{Z}$-compactifications $\widehat{X}$ and $\widehat{Y}$ of $X$ and $Y$, respectively, is a $\mathcal{Z}$-compactification of $X\times Y$.\end{fact}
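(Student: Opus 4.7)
The claim breaks into two verifications: that $\widehat{X}\times\widehat{Y}$ is a compact ANR containing $X\times Y$ as a dense open subset, and that the complement $Z:=(\widehat{X}\times\widehat{Y})\setminus(X\times Y)$ is a $\mathcal{Z}$-set in $\widehat{X}\times\widehat{Y}$. Compactness of $\widehat{X}\times\widehat{Y}$ is immediate from compactness of the factors, and the product of two ANRs is itself an ANR (a standard fact which follows, for instance, from Theorem \ref{hanner} applied to products of $\epsilon$-dominating ANRs of the factors). That $X\times Y$ is the complement of $Z$ in $\widehat{X}\times\widehat{Y}$ follows from the set identity
\begin{equation*}
(\widehat{X}\times\widehat{Y})\setminus(X\times Y)=\bigl(\partial X\times\widehat{Y}\bigr)\cup\bigl(\widehat{X}\times\partial Y\bigr),
\end{equation*}
so the main content is producing a $\mathcal{Z}$-set homotopy pushing $\widehat{X}\times\widehat{Y}$ off of this set.

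The plan is to build this homotopy coordinatewise from the given $\mathcal{Z}$-set homotopies on the factors. Let $F:\widehat{X}\times[0,1]\to\widehat{X}$ and $J:\widehat{Y}\times[0,1]\to\widehat{Y}$ be $\mathcal{Z}$-set homotopies witnessing that $\partial X\subseteq\widehat{X}$ and $\partial Y\subseteq\widehat{Y}$ are $\mathcal{Z}$-sets, so that $F_0\equiv\mathrm{id}_{\widehat{X}}$, $J_0\equiv\mathrm{id}_{\widehat{Y}}$, and $F_t(\widehat{X})\subseteq X$, $J_t(\widehat{Y})\subseteq Y$ for every $t>0$. Define
\begin{equation*}
H:(\widehat{X}\times\widehat{Y})\times[0,1]\to\widehat{X}\times\widehat{Y},\qquad H\bigl((x,y),t\bigr):=\bigl(F(x,t),J(y,t)\bigr).
\end{equation*}
Continuity of $H$ is immediate from continuity of $F$ and $J$, and $H_0=\mathrm{id}_{\widehat{X}\times\widehat{Y}}$. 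For $t>0$ we have $F_t(\widehat{X})\subseteq X$ and $J_t(\widehat{Y})\subseteq Y$, so $H_t(\widehat{X}\times\widehat{Y})\subseteq X\times Y$, i.e.\ $H_t(\widehat{X}\times\widehat{Y})\cap Z=\emptyset$. This exhibits $Z$ as a $\mathcal{Z}$-set.

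There is no real obstacle in this proof, it is a direct verification; the only conceptual point worth noting is that the ANR hypothesis on each factor automatically upgrades to an ANR structure on the product, which is what allows the product $\mathcal{Z}$-set definition to apply without any extra work. Putting the three ingredients (compactness, product-ANR, and the coordinatewise $\mathcal{Z}$-set homotopy) together gives the fact.
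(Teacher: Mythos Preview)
Your proof is correct and follows essentially the same approach as the paper: both take the coordinatewise product $H((x,y),t)=(F(x,t),J(y,t))$ of the given $\mathcal{Z}$-set homotopies and observe that $H_t$ lands in $X\times Y$ for $t>0$. Your version is slightly more careful in that you write the complement as $(\partial X\times\widehat{Y})\cup(\widehat{X}\times\partial Y)$, which is the correct expression (the paper's $(\partial X\times Y)\cup(X\times\partial Y)$ is a minor slip).
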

\begin{proof}  It is a standard fact that a product of ANR's is an ANR.  Thus $\widehat{X}\times\widehat{Y}$ is an ANR.
\Par
Suppose $F:\widehat{X}\times[0,1]\rightarrow\widehat{X}$ and $G:\widehat{Y}\times [0,1]\rightarrow\widehat{Y}$ are $\mathcal{Z}$-set homotopies.  Define $H:\widehat{X}\times\widehat{Y}\times[0,1]\rightarrow \widehat{X}\times\widehat{Y}$ by $H(\widehat{x},\widehat{y},t):=(F(\widehat{x},t),G(\widehat{y},t))$
for all $(\widehat{x},\widehat{y},t)\in\widehat{X}\times\widehat{Y}\times[0,1]$.
\Par
Since $F_0\equiv\text{id}_{\widehat{X}}$ and $G_0\equiv\text{id}_{\widehat{Y}}$, then $H_0\equiv\text{id}_{\widehat{X}\times\widehat{Y}}$.
\Par
Moreover, since $F_t(\widehat{X})\subseteq X$ and $G_t(\widehat{Y})\subseteq Y$ for any $t>0$, then $H_t(\widehat{X}\times\widehat{Y})\subseteq X\times Y$ whenever $t>0$.
\Par
Hence $H_t(\widehat{X}\times\widehat{Y})\cap\left[(\partial X\times Y)\cup(X\times\partial Y)\right]=\emptyset$ for any $t>0$.
\Par
Therefore, $\left[(\partial X\times Y)\cup(X\times\partial Y)\right]$ is a $\mathcal{Z}$-set in $\widehat{X}\times\widehat{Y}$.
\end{proof}

Unfortunately, the analogous result for $\mathcal{Z}$-structures does not hold:  Suppose $(\widehat{X},\partial X)$ and $(\widehat{Y},\partial Y)$ are $\mathcal{Z}$-structures on $G$ and $H$, respectively.  Although, by Fact \ref{zcompprod}, $\widehat{X}\times\widehat{Y}$ is a $\mathcal{Z}$-compactification of $X\times Y$, the space $\widehat{X}\times\widehat{Y}$ does not, in general, satisfy the null condition with respect to the action of $G\times H$ on $X\times Y$.

\begin{example} Let $\widehat{\mathbb{R}}$ denote the $\mathcal{Z}$-compactification of the real line $\mathbb{R}$ by two points, and consider the $\mathcal{Z}$-compactification $\widehat{\mathbb{R}}\times\widehat{\mathbb{R}}$ of $\mathbb{R}^2$, the Euclidean plane.  
\Par
Observe that $\widehat{\mathbb{R}}$ is a $\mathcal{Z}$-structure on $\mathbb{Z}$, but, with the product topology, $\widehat{\mathbb{R}}\times\widehat{\mathbb{R}}$ is not a $\mathcal{Z}$-structure on $\mathbb{Z}\times\mathbb{Z}$:
\Par
Write $\widehat{\mathbb{R}}:=\left\{\alpha\right\}\cup(-\infty,\infty)\cup\left\{\beta\right\}$.  The set $\mathcal{B}=\left\{(a,b)\ |\ a,b\in\mathbb{R}\right\}\cup\left\{\left\{\alpha\right\}\cup(-\infty,a)\ |\ a\in \mathbb{R}\right\}\cup\left\{(b,\infty)\cup\left\{\beta\right\}\ |\ b\in\mathbb{R}\right\}$ is a basis for the topology on $\widehat{\mathbb{R}}$.
\Par
Now $\widehat{\mathbb{R}^2}:=\widehat{\mathbb{R}}_1\times\widehat{\mathbb{R}}_2$, with the product topology.
\Par
Note in Figure \ref{prodnbhds} some typical neighborhoods of boundary points in $\widehat{\mathbb{R}^2}$.
\Par

\begin{figure}[!h]\begin{minipage}{3.2in}\begin{center}\begin{tikzpicture}[scale=1.15]
% Draw x and y axis lines
\fill[color=red!30] (-1.5,-1.5) rectangle (-1.001,1.5);
\fill[color=blue!30] (-0.5,.75) rectangle (0.25,1.5);
\fill[color=green!30] (-0.25,-1.5) rectangle (1.5,-0.8);
\draw [-] (-1.5,1.5) -- (1.5,1.5) -- (1.5,-1.5) -- (-1.5,-1.5) --(-1.5,1.5);
\draw [-] (-1.5,0) -- (1.5,0);
\draw [-] (0,-1.5) -- (0,1.5);
\draw [dashed] (-1,1.5)--(-1,-1.5);
\draw [dashed] (-0.5,1.5) -- (-0.5,.75) -- (0.25,0.75) -- (0.25,1.5);
\draw [dashed] (-0.25,-1.5) -- (-0.25,-0.8) -- (1.5,-0.8);

\fill (-1.5,0) circle (1.1pt) node [left=1pt] {$(\alpha_1,0)$};
\fill (1.5,0) circle (1.1pt) node [right=1pt] {$(\beta_1,0)$};
\fill (0,-1.5) circle (1.1pt) node [below=1pt] {$(0,\alpha_2)$};
\fill (0,1.5) circle (1.1pt) node [above=1pt] {$(0,\beta_2)$};
\fill (-1.5,1.5) circle (1.1pt) node [above=1pt] {$(\alpha_1,\beta_2)$};
\fill (1.5,1.5) circle (1.1pt) node [above=1pt] {$(\beta_1,\beta_2)$};
\fill (-1.5,-1.5) circle (1.1pt) node [below=1pt] {$(\alpha_1,\alpha_2)$};
\fill (1.5,-1.5) circle (1.1pt) node [below=1pt] {$(\beta_1,\alpha_2)$};
\draw [gray,decorate,decoration={brace,amplitude=5pt},xshift=-17pt]
   (-1.9,-1.5)  -- (-1.9,1.5) 
   node [black,midway,left=4pt,xshift=-2pt] {$\widehat{\mathbb{R}}_2$};
\draw [gray,decorate,decoration={brace,amplitude=5pt},yshift=-7pt]
   (1.5,-1.8)  -- (-1.5,-1.8) 
   node [black,midway,below=4pt,xshift=-2pt] {$\widehat{\mathbb{R}}_1$};

\end{tikzpicture}\end{center}
\label{prodnbhds}
\caption{Neighborhoods of boundary points in $\widehat{\mathbb{R}^2}$}\end{minipage}
\begin{minipage}{3.2in}\begin{center}\begin{tikzpicture}[scale=1]
% Draw x and y axis lines
\draw [-] (-1.5,1.5) -- (1.5,1.5) -- (1.5,-1.5) -- (-1.5,-1.5) --(-1.5,1.5);
\draw [-] (-1.5,0) -- (1.5,0);
\draw [-] (0,-1.5) -- (0,1.5);
\fill (-1.5,0) circle (1.1pt) node [left=1pt] {$(\alpha_1,0)$};
\fill (1.5,0) circle (1.1pt) node [right=1pt] {$(\beta_1,0)$};
\fill (0,-1.5) circle (1.1pt) node [below=1pt] {$(0,\alpha_2)$};
\fill (0,1.5) circle (1.1pt) node [above=1pt] {$(0,\beta_2)$};
\fill (-1.5,1.5) circle (1.1pt) node [above=1pt] {$(\alpha_1,\beta_2)$};
\fill (1.5,1.5) circle (1.1pt) node [above=1pt] {$(\beta_1,\beta_2)$};
\fill (-1.5,-1.5) circle (1.1pt) node [below=1pt] {$(\alpha_1,\alpha_2)$};
\fill (1.5,-1.5) circle (1.1pt) node [below=1pt] {$(\beta_1,\alpha_2)$};
\draw [gray,decorate,decoration={brace,amplitude=5pt},xshift=-25pt]
   (-1.8,-1.5)  -- (-1.8,1.5) 
   node [black,midway,left=4pt,xshift=-2pt] {$\widehat{\mathbb{R}}_2$};
\draw [gray,decorate,decoration={brace,amplitude=5pt},yshift=-10pt]
   (1.5,-1.8)  -- (-1.5,-1.8) 
   node [black,midway,below=4pt,xshift=-2pt] {$\widehat{\mathbb{R}}_1$};
   
\fill[color=blue] (-.5,0) circle (1.1pt);
\fill[color=blue] (-.5,0.5) circle (1.1pt);
\fill[color=blue] (-.5,0.75) circle (1.1pt);
\fill[color=blue] (-.5,0.95) circle (1.1pt);
\fill[color=blue] (-.5,1.1) circle (1.1pt);
\fill[color=blue] (.5,0) circle (1.1pt);
\fill[color=blue] (.5,0.5) circle (1.1pt);
\fill[color=blue] (.5,0.75) circle (1.1pt);
\fill[color=blue] (.5,0.95) circle (1.1pt);
\fill[color=blue] (.5,1.1) circle (1.1pt);
\fill[color=blue] (-.5,-0.5) circle (1.1pt);
\fill[color=blue] (-.5,-0.75) circle (1.1pt);
\fill[color=blue] (-.5,-0.95) circle (1.1pt);
\fill[color=blue] (-.5,-1.1) circle (1.1pt);
\fill[color=blue] (.5,-0.5) circle (1.1pt);
\fill[color=blue] (.5,-0.75) circle (1.1pt);
\fill[color=blue] (.5,-0.95) circle (1.1pt);
\fill[color=blue] (.5,-1.1) circle (1.1pt);
\draw[line width=1pt, color=blue](-.5,0) -- (0.5,0);
\draw[line width=1pt, color=blue](-.5,0.5) -- (0.5,0.5);
\draw[line width=1pt, color=blue](-.5,-0.5) -- (0.5,-0.5);
\draw[line width=1pt, color=blue](-.5,0.75) -- (0.5,0.75);
\draw[line width=1pt, color=blue](-.5,-0.75) -- (0.5,-0.75);
\draw[line width=1pt, color=blue](-.5,0.95) -- (0.5,0.95);
\draw[line width=1pt, color=blue](-.5,-0.95) -- (0.5,-0.95);
\draw[line width=1pt, color=blue](-.5,1.1) -- (0.5,1.1);
\draw[line width=1pt, color=blue](-.5,-1.1) -- (0.5,-1.1);
\path (-0.15,1.35) node {$\vdots$};
\path (-0.15,-1.26) node {$\vdots$};
% Draw a blue circle at the origin of radius 1
%\draw [semithick,blue] (0,0) circle (1);

% Draw a triangle with vertices (0,0), (1,0), (1,0.7)
%\draw [semithick,blue] (0,0) -- (1,0) -- (1,0.7) -- cycle;
\end{tikzpicture}\end{center}
\caption{Vertical translates of $C:=[-1,1]\times\left\{0\right\}$}
\end{minipage}\end{figure}
\Par
\noindent Now consider the compact subset $C:=[-1,1]\times\left\{0\right\}$ of $\mathbb{R}_1\times\mathbb{R}_2$.  Then for any $n\in\mathbb{Z}$, $(0,n)\cdot C=[-1,1]\times\left\{n\right\}$.
\Par

Let $\mathcal{U}:=\left\{U_0,U_1,U_2,U_3\right\}$, where
\begin{center} $U_0:=(-\frac{3}{4},\frac{3}{4})\times\left((-\frac{1}{2},\infty)\times\left\{\beta_2\right\}\right)$, 
$U_1:=(-\frac{3}{4},\frac{3}{4})\times\left(\left\{\alpha_2\right\}\cup(-\infty,\frac{1}{2})\right)$
\\
$U_2:=\left(\left\{\alpha_1\right\}\cup(-\infty,-\frac{1}{2})\right)\times\widehat{\mathbb{R}}_2$, 
$U_3:=\left((\frac{1}{2},\infty)\cup\left\{\beta_1\right\}\right)\times\widehat{\mathbb{R}}_2$\end{center}

Then $\mathcal{U}$ is an open cover of $\widehat{\mathbb{R}^2}$, but $(0,n)\cdot C$ is not contained in any $U_i$ for any $n\in\mathbb{Z}$.

\begin{figure}[!h]
\begin{center}\begin{tikzpicture}[scale=1.15]
\fill[color=green,opacity=.4] (-1.5,-1.5) rectangle (-0.25,1.5);
\fill[color=blue,opacity=.4] (-0.375,-0.25) rectangle (0.375,1.5);
\fill[color=red,opacity=.4] (-0.375,0.25) rectangle (0.375,-1.5);
\fill[color=yellow,opacity=.4] (0.25,-1.5) rectangle (1.5,1.5);
\draw [dashed] (-0.25,1.5)--(-0.25,-1.5);
\draw [dashed] (-0.375,1.5) -- (-0.375,-.25) -- (0.375,-0.25) -- (0.375,1.5);
\draw [dashed] (-0.375,-1.5) -- (-0.375,.25) -- (0.375,0.25) -- (0.375,-1.5);
\draw [dashed] (0.25,1.5)--(0.25,-1.5);
% Draw x and y axis lines
\draw [-] (-1.5,1.5) -- (1.5,1.5) -- (1.5,-1.5) -- (-1.5,-1.5) --(-1.5,1.5);
\draw [-] (-1.5,0) -- (1.5,0);
\draw [-] (0,-1.5) -- (0,1.5);
\fill (-1.5,0) circle (1.1pt) node [left=1pt] {$(\alpha_1,0)$};
\fill (1.5,0) circle (1.1pt) node [right=1pt] {$(\beta_1,0)$};
\fill (0,-1.5) circle (1.1pt) node [below=1pt] {$(0,\alpha_2)$};
\fill (0,1.5) circle (1.1pt) node [above=1pt] {$(0,\beta_2)$};
\fill (-1.5,1.5) circle (1.1pt) node [above=1pt] {$(\alpha_1,\beta_2)$};
\fill (1.5,1.5) circle (1.1pt) node [above=1pt] {$(\beta_1,\beta_2)$};
\fill (-1.5,-1.5) circle (1.1pt) node [below=1pt] {$(\alpha_1,\alpha_2)$};
\fill (1.5,-1.5) circle (1.1pt) node [below=1pt] {$(\beta_1,\alpha_2)$};
\draw [gray,decorate,decoration={brace,amplitude=5pt},xshift=-25pt]
   (-1.8,-1.5)  -- (-1.8,1.5) 
   node [black,midway,left=4pt,xshift=-2pt] {$\widehat{\mathbb{R}}_2$};
\draw [gray,decorate,decoration={brace,amplitude=5pt},yshift=-10pt]
   (1.5,-1.8)  -- (-1.5,-1.8) 
   node [black,midway,below=4pt,xshift=-2pt] {$\widehat{\mathbb{R}}_1$};
   
\fill[color=blue] (-.5,0) circle (1.1pt);
\fill[color=blue] (-.5,0.5) circle (1.1pt);
\fill[color=blue] (-.5,0.75) circle (1.1pt);
\fill[color=blue] (-.5,0.95) circle (1.1pt);
\fill[color=blue] (-.5,1.1) circle (1.1pt);
\fill[color=blue] (.5,0) circle (1.1pt);
\fill[color=blue] (.5,0.5) circle (1.1pt);
\fill[color=blue] (.5,0.75) circle (1.1pt);
\fill[color=blue] (.5,0.95) circle (1.1pt);
\fill[color=blue] (.5,1.1) circle (1.1pt);
\fill[color=blue] (-.5,-0.5) circle (1.1pt);
\fill[color=blue] (-.5,-0.75) circle (1.1pt);
\fill[color=blue] (-.5,-0.95) circle (1.1pt);
\fill[color=blue] (-.5,-1.1) circle (1.1pt);
\fill[color=blue] (.5,-0.5) circle (1.1pt);
\fill[color=blue] (.5,-0.75) circle (1.1pt);
\fill[color=blue] (.5,-0.95) circle (1.1pt);
\fill[color=blue] (.5,-1.1) circle (1.1pt);
\draw[line width=1pt, color=blue](-.5,0) -- (0.5,0);
\draw[line width=1pt, color=blue](-.5,0.5) -- (0.5,0.5);
\draw[line width=1pt, color=blue](-.5,-0.5) -- (0.5,-0.5);
\draw[line width=1pt, color=blue](-.5,0.75) -- (0.5,0.75);
\draw[line width=1pt, color=blue](-.5,-0.75) -- (0.5,-0.75);
\draw[line width=1pt, color=blue](-.5,0.95) -- (0.5,0.95);
\draw[line width=1pt, color=blue](-.5,-0.95) -- (0.5,-0.95);
\draw[line width=1pt, color=blue](-.5,1.1) -- (0.5,1.1);
\draw[line width=1pt, color=blue](-.5,-1.1) -- (0.5,-1.1);
\path (-0.15,1.35) node {$\vdots$};
\path (-0.15,-1.26) node {$\vdots$};
% Draw a blue circle at the origin of radius 1
%\draw [semithick,blue] (0,0) circle (1);

% Draw a triangle with vertices (0,0), (1,0), (1,0.7)
%\draw [semithick,blue] (0,0) -- (1,0) -- (1,0.7) -- cycle;
\end{tikzpicture}\end{center}
\caption{An open cover of $\widehat{\mathbb{R}^2}$ whose elements contain no vertical translate of $C$}
\end{figure}
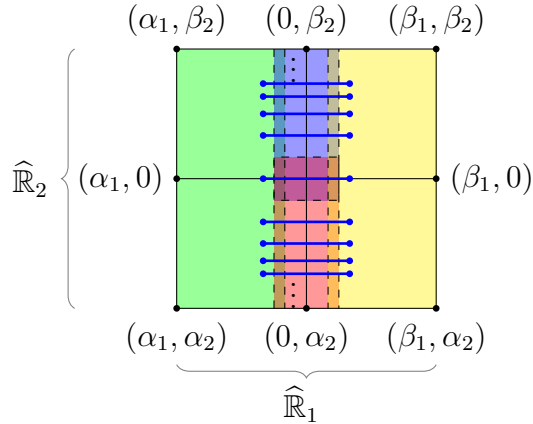

Therefore $\widehat{\mathbb{R}^2}$ does not satisfy the null condition with respect to the action of $\mathbb{Z}\times\mathbb{Z}$ on $\mathbb{R}\times\mathbb{R}$.\end{example}

\begin{example}  Now consider the $\mathcal{Z}$-compactification $\widehat{\mathbb{R}^2}'$ of $\mathbb{R}^2$ obtained instead by adjoining to $\mathbb{R}^2$ a circle $Z:=[0,2\pi]/\sim$, where $0\sim 2\pi$ and having as basis for its topology
\begin{equation*} \mathcal{B}:=\mathcal{B}_0\cup\mathcal{B}_{\partial}\end{equation*}
where $\mathcal{B}_0$ contains the standard open sets of $\mathbb{R}^2$, and $\mathcal{B}_{\partial}$ contains all sets of the form $B(z,R,\epsilon)$, where, for each $z\in Z,R>0,\epsilon>0$,
\begin{equation*}B(z,R,\epsilon):=\left\{(r,\theta)\in\mathbb{R}^2\ |\ r>R,\left|\theta-z\right|<\epsilon\right\}\cup\left\{z'\in Z\ |\ \left|z-z'\right|<\epsilon\right\}\end{equation*}
Note again, in Figure \ref{conenbhds} some examples of typical neighborhoods of boundary points in $\widehat{\mathbb{R}^2}'$.
\Par
\begin{figure}[!h]
\begin{center}\begin{tikzpicture}[scale=1.5]
\clip (-1.5,-1.5) rectangle (1.5,1.5);
\draw[dashed, fill=red!30] (100:3) arc (100:140:3) -- (140:1) arc (140:100:1) -- cycle;
\draw[dashed, fill=blue!30] (-60:3) arc (-60:40:3) -- (40:0.6) arc (40:-60:0.6) -- cycle;
\draw[dashed, fill=green!30] (190:3) arc (190:220:3) -- (220:0.1) arc (220:190:0.1) -- cycle;
\draw [-] (-1.5,1.5) -- (1.5,1.5) -- (1.5,-1.5) -- (-1.5,-1.5) --(-1.5,1.5);
\draw [-] (-1.5,0) -- (1.5,0);
\draw [-] (0,-1.5) -- (0,1.5);
\end{tikzpicture}\end{center}
\caption{Neighborhoods of boundary points in $\widehat{\mathbb{R}^2}$}
\label{conenbhds}\end{figure}
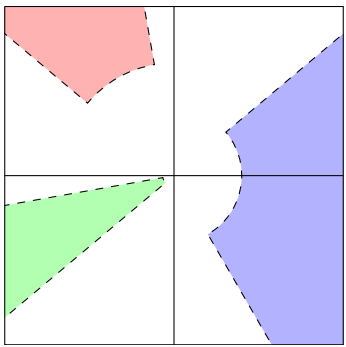

Now the variation in angles achieved by the translates of a given compactum in $\mathbb{R}^2$ shrinks as the compactum is pushed by the elements of $\mathbb{Z}\times \mathbb{Z}$ outside of metric balls of larger and larger radius.  Figure \ref{nullcondn} illustrates, for example, that all but finitely many translates $(0,n)\cdot C$ ($n\geq 0$) of the compactum $C=[-1,1]\times\left\{0\right\}$ fall into $B(\frac{\pi}{2},R,\epsilon)$, no matter how small $\epsilon$ is chosen.

\begin{figure}[!h]
\begin{center}$\frac{\pi}{2}$
\\
\begin{tikzpicture}[scale=1.5]
\clip (-1.5,-1.5) rectangle (1.5,1.5);
\draw[dashed, fill=green, fill opacity=.3] (45:3) arc (45:135:3) -- (135:0.25) arc (135:45:0.25) -- cycle;
\draw[dashed, fill=green, fill opacity=.3] (55:3) arc (55:125:3) -- (125:0.5) arc (125:55:0.5) -- cycle;
\draw[dashed, fill=green, fill opacity=.3] (65:3) arc (65:115:3) -- (115:0.75) arc (115:65:0.75) -- cycle;
\draw [-] (-1.5,1.5) -- (1.5,1.5) -- (1.5,-1.5) -- (-1.5,-1.5) --(-1.5,1.5);
\draw [-] (-1.5,0) -- (1.5,0);
\draw [-] (0,-1.5) -- (0,1.5);
\fill[color=blue] (-.5,0) circle (1.1pt);
\fill[color=blue] (-.5,0.5) circle (1.1pt);
\fill[color=blue] (-.5,0.75) circle (1.1pt);
\fill[color=blue] (-.5,0.95) circle (1.1pt);
\fill[color=blue] (-.5,1.1) circle (1.1pt);
\fill[color=blue] (.5,0) circle (1.1pt);
\fill[color=blue] (.5,0.5) circle (1.1pt);
\fill[color=blue] (.5,0.75) circle (1.1pt);
\fill[color=blue] (.5,0.95) circle (1.1pt);
\fill[color=blue] (.5,1.1) circle (1.1pt);
\fill[color=blue] (-.5,1.2) circle (1.1pt);
\fill[color=blue] (.5,1.2) circle (1.1pt);

\fill (0,1.5) circle (1.1pt) node [above=1pt] {$\frac{\pi}{2}$};

\draw[line width=1pt, color=blue](-.5,0) -- (0.5,0);
\draw[line width=1pt, color=blue](-.5,0.5) -- (0.5,0.5);
\draw[line width=1pt, color=blue](-.5,0.75) -- (0.5,0.75);
\draw[line width=1pt, color=blue](-.5,0.95) -- (0.5,0.95);
\draw[line width=1pt, color=blue](-.5,1.1) -- (0.5,1.1);
\draw[line width=1pt, color=blue](-.5,1.2) -- (0.5,1.2);

\end{tikzpicture}\end{center}
\caption{Translates of $C$ eventually fit into small neighborhoods of $\frac{\pi}{2}$}
\label{nullcondn}
\end{figure}
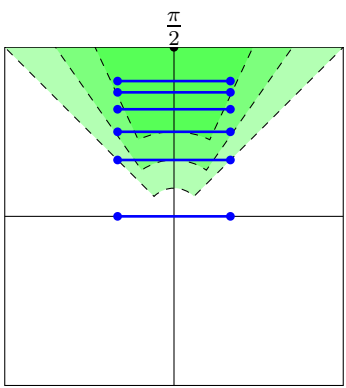
Therefore $\widehat{\mathbb{R}^2}'$ satisfies the null condition with respect to the action of $\mathbb{Z}\times\mathbb{Z}$ on $\mathbb{R}^2$. 
\Par
Perhaps the following depiction of this example is more appropriate in this paper, as its essence is analogous to the techniques used to prove Theorem \ref{zstructurethmdirprod}:
\Par
It is not difficult to see that the set $Z$ as described above is homeomorphic to $\left\{\alpha_1,\beta_1\right\}\ast\left\{\alpha_2,\beta_2\right\}$, where $\ast$ indicates a join.  In other words,
\begin{equation*}Z\approx\left\{\alpha_1,\beta_1\right\}\times\left\{\alpha_2,\beta_2\right\} \times[0,\infty]/\sim
\end{equation*}
where $\sim$ is the equivalence relation given by $(\alpha_1,\alpha_2,0)\sim(\alpha_1,\beta_2,0)$, $(\beta_1,\alpha_2,0)\sim(\beta_1,\beta_2,0)$, $(\alpha_1,\alpha_2,\infty)\sim(\beta_1, \alpha_2,\infty)$, and $(\alpha_1,\beta_2,\infty)\sim(\beta_1, \beta_2,\infty)$.
\Par
To each point of $\mathbb{R}^2$ we may assign a ``slope'' (say $(x,y)\longmapsto \left|\frac{y}{x}\right|$) and, in this example, points in $Z$ can be pulled in to $\mathbb{R}^2$ via a homotopy which keeps the slope coordinate constant.\end{example}

This is essentially the kind of structure we impose on $X\times Y$ when $(\widehat{X},\partial X)$ and $(\widehat{Y},\partial Y)$ are $\mathcal{Z}$-structures on $G$ and $H$, respectively, to prove Theorem \ref{zstructurethmdirprod}.
\Par
Suppose from this point forward that $(\widehat{X},\partial X)$ and $(\widehat{Y},\partial Y)$ are $\mathcal{Z}$-structures on $G$ and $H$, respectively.  We will denote by $\overline{x}$ (respectively, $\overline{y}$) a point in $\partial X$ (respectively, $\partial Y$), and by $\widehat{x}$ (respectively, $\widehat{y}$) a general point of $\widehat{X}$ (respectively, $\widehat{Y}$).
\Par
Since $\partial X$ and $\partial Y$ are $\mathcal{Z}$-sets in $\widehat{X}$ and $\widehat{Y}$, respectively, there exist homotopies $\alpha :\widehat{X}\times [0,1]\rightarrow \widehat{X}$ and $\beta:\widehat{Y}\times[0,1]\rightarrow \widehat{Y}$ such that $\alpha_0\equiv \text{id}_{\widehat{X}}$, $\beta_0\equiv\text{id}_{\widehat{Y}}$, $\alpha_t(\widehat{X})\subseteq X$ for all $t\in(0,1)$, and $\beta_t(\widehat{Y})\subseteq Y$ for all $t\in(0,1)$.  By Lemma \ref{sdrzcompactification}, we may assume in addition that $\alpha$ and $\beta$ are strong deformation retractions to base points $x_0\in X$ and $y_0\in Y$, so that $\alpha_1(\widehat{X})=\left\{x_0\right\}$, and $\beta_1(\widehat{Y})=\left\{y_0\right\}$.

\begin{definition}  A metric $d:X\times X\rightarrow [0,\infty)$ is \textbf{proper} if every closed metric ball in $X$ is compact.  A map $f:X\rightarrow Y$ is \textbf{proper} if for every compact $C\subseteq Y$, the preimage $f^{-1}(C)$ is compact in $X$.\end{definition}

\begin{lemma}\label{propermetric}  There exists a proper metric $d$ on $X$.
\end{lemma}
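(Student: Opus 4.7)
The plan is to exploit the fact that $X$ is an open subset of the compact metric space $(\widehat{X},\rho)$, and to modify the restricted metric $\rho|_X$ by adding a term that blows up as one approaches $\partial X$. The key ingredient is a proper continuous function $X\to[0,\infty)$.

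First, I would define $f:X\to[0,\infty)$ by
\begin{equation*}
f(x):=\frac{1}{\rho(x,\partial X)}.
\end{equation*}
Since $\partial X$ is closed in $\widehat{X}$ and disjoint from $X$, the denominator is strictly positive and continuous on $X$, so $f$ is continuous. Moreover $f$ is proper: for each $N>0$,
\begin{equation*}
f^{-1}\bigl([0,N]\bigr)=\left\{x\in X\ \bigg|\ \rho(x,\partial X)\geq\tfrac{1}{N}\right\}
\end{equation*}
is a closed subset of the compact space $\widehat{X}$, hence compact.

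Next, I would set
\begin{equation*}
d(x,x'):=\rho(x,x')+|f(x)-f(x')|.
\end{equation*}
A routine check shows $d$ is a metric on $X$ (positivity and symmetry are immediate; the triangle inequality follows from the triangle inequality for $\rho$ and the triangle inequality for absolute value on $\mathbb{R}$). The inequality $d\geq\rho$ shows the identity $(X,d)\to(X,\rho)$ is continuous, while $\rho$-continuity of $f$ gives continuity of $(X,\rho)\to(X,d)$; hence $d$ is compatible with the original topology on $X$.

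Finally, I would verify that $d$ is proper. Fix $x_0\in X$ and $R>0$, and let $B$ be the closed $d$-ball of radius $R$ about $x_0$. Any $x\in B$ satisfies $|f(x)-f(x_0)|\leq R$, so $f(x)\leq R+f(x_0)=:N$, which means $B\subseteq f^{-1}([0,N])$. Since $f^{-1}([0,N])$ is compact (in the common topology of $\rho$ and $d$) and $B$ is closed in $(X,d)$, $B$ is a closed subset of a compact set, hence compact. There is essentially no difficult step here; the only thing worth pausing over is confirming that the two topologies agree, so that compactness of $f^{-1}([0,N])$ in $(X,\rho)$ transfers to $(X,d)$, and the whole argument is really a packaging of the standard ``one-point blow-up'' trick for locally compact separable metric spaces.
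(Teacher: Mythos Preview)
Your argument is correct and follows essentially the same strategy as the paper: produce a continuous proper function $X\to[0,\infty)$ from the fact that $X$ is open in the compact $\widehat{X}$, then use it to stretch the ambient metric near $\partial X$. The paper packages this as a proper embedding $x\mapsto(x,\phi(x))$ into $\widehat{X}\times[0,\infty)$ and pulls back the metric $\sqrt{\widehat{d}^{\,2}+|\cdot|^2}$, whereas you add $|f(x)-f(x')|$ to $\rho(x,x')$ directly; the two constructions give uniformly equivalent metrics, and your explicit choice $f(x)=1/\rho(x,\partial X)$ is cleaner than the paper's route through an abstract separating function composed with a homeomorphism $[0,\infty)\to[0,1)$. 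One small edge case worth a sentence: if $\partial X=\emptyset$ your $f$ is undefined, but then $X=\widehat{X}$ is compact and the lemma is trivial.
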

\begin{proof}  First, note that, since $\widehat{X}$ is metrizable (it is an ER), we may choose a metric $\widehat{d}$ on $\widehat{X}$.  Then $\widehat{D}:(\widehat{X}\times [0,\infty))\times(\widehat{X}\times [0,\infty))\rightarrow[0,\infty)$ defined by 
\begin{equation*}\widehat{D}((x_1,t_1),(x_2,t_2)):=\sqrt{(\widehat{d}(x_1,x_2))^2+\left|t_1-t_2\right|^2}\end{equation*}
is a proper metric on $\widehat{X}\times [0,\infty)$.
\Par
Let $f:\widehat{X}\rightarrow [0,1]$ be a continuous function satisfying $f(x_0)=0$, $f(\partial X)=\left\{1\right\}$, and $f(x)\in(0,1)$ if $x\in X\backslash\left\{x_0\right\}$.
\Par
Let $h:[0,\infty)\rightarrow[0,1)$ be a homeomorphism, and consider the graph
\\
$G:=\left\{(x,f(h(x)))\ |\ x\in X\right\}\subseteq \widehat{X}\times[0,\infty)$ of $f\circ h$.  Since $\widehat{X}\times[0,\infty)$ is a proper metric space and $g:X\rightarrow \widehat{X}\times[0,\infty)$ with $g(x)=(x,f(h(x)))$ is a proper embedding of $X$ in $\widehat{X}\times[0,\infty)$, then $X$ inherits a proper metric $d$ from $\widehat{X}\times[0,\infty)$.
\end{proof}

From now on, we will assume that $(X,\rho)$ and $(Y,\tau)$ are proper metric spaces, and that $\overline{\rho}$ and $\overline{\tau}$ are metrics on $\widehat{X}$ and $\widehat{Y}$, respectively.

\begin{lemma}\label{defpq}  There exists a proper map $p:X\rightarrow [0,\infty)$ having the following properties: 
\\
\tab (i)  The variation of $p$ over translates of a given compactum in $X$ is bounded, i.e.
\begin{center} $R_p(C):=\sup\left\{\max\left\{p(x)-p(x')\ |\ x,x'\in gC\right\}\ |\ g\in G\right\}<\infty$\hspace{.25in}$(\dagger)$\end{center}
\tab for any compactum $C$ in $X$,
\\
and
\\
\tab (ii)  For some sequence $1=t_0>t_1>t_2>\cdots >0$, we have
\begin{center}$p\left(\alpha(\partial X\times [t_i,t_{i-1}))\right)\subseteq (i-1,i+1]$\hspace{.25in}$(\dagger\dagger)$\end{center} 
\end{lemma}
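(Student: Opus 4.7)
The plan is to build $p$ from a $G$-equivariant compact exhaustion of $X$ together with a Urysohn-type sum, aligning the parameters $t_i$ to the exhaustion by first slowing the homotopy $\alpha$ via Lemma \ref{slowedsdr}. Using the cocompact action, I fix a compact $K \subseteq X$ with $x_0 \in K$ and $GK = X$, together with a finite symmetric generating set $S$ of $G$; setting $B_n := \{g \in G : |g|_S \leq n\}$, I take $C_n$ to be an appropriate compact thickening of $B_n \cdot K$ so that $\{C_n\}_{n \geq 0}$ is an exhaustion of $X$ with $C_n \subseteq \mathrm{int}(C_{n+1})$ and $x_0 \in C_0$. Because $\alpha(\widehat{X} \times [s, 1])$ is a compact subset of $X$ for every $s > 0$, I may choose a strictly decreasing sequence $1 = t_0 > t_1 > t_2 > \cdots \to 0$ inductively with $\alpha(\widehat{X} \times [t_i, 1]) \subseteq \mathrm{int}(C_i)$, and by Lemma \ref{slowedsdr} I may further arrange $\alpha_t|_{C_i} = \mathrm{id}_{C_i}$ for $t \in [0, t_i]$. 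For each $i \geq 0$, Urysohn's Lemma provides a continuous $f_i : \widehat{X} \to [0,1]$ with $f_i \equiv 0$ on $C_i$ and $f_i \equiv 1$ on $\widehat{X} \setminus \mathrm{int}(C_{i+1})$, and I define
\begin{equation*}
p(x) := \sum_{i=0}^{\infty} f_i(x), \quad x \in X.
\end{equation*}
Every $x \in X$ lies in some $C_N$, beyond which all $f_i$ vanish, so the sum is locally finite and $p$ is continuous; the inclusion $p^{-1}([0,n]) \subseteq C_{n+1}$ exhibits $p$ as proper; and direct bookkeeping shows $p(x) \in (n-1, n]$ whenever $x \in C_n \setminus C_{n-1}$.

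Condition $(\dagger)$ is a consequence of the $G$-equivariant construction of $\{C_n\}$: any compact $C \subseteq X$ lies in $FK$ for some finite $F \subseteq G$, so setting $N := \max_{f \in F}|f|_S$, we have $gC \subseteq C_{|g|+N}$ for every $g \in G$ and, once $|g|$ is sufficiently large, $gC$ is disjoint from $C_{|g|-N-1}$ (using properness of the action to bound below). The integer-valued level function $\ell(x) := \min\{n : x \in C_n\}$ therefore oscillates on $gC$ by at most $2N + 1$ uniformly in $g$; since $|p(x) - \ell(x)| \leq 1$ on $X$, condition $(\dagger)$ follows.

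The main obstacle lies in condition $(\dagger\dagger)$. The containment $\alpha(\widehat{X} \times [t_i, 1]) \subseteq C_i$ immediately yields $p(\alpha_t(\bar x)) \leq i \leq i + 1$ for $t \in [t_i, t_{i-1})$ and $\bar x \in \partial X$, but the strict lower bound $p(\alpha_t(\bar x)) > i - 1$ demands $\alpha_t(\bar x) \notin C_{i-1}$ throughout $[t_i, t_{i-1})$. This amounts to a monotonicity of the $\alpha$-trajectories from $\partial X$ relative to the exhaustion, which is not automatic from the SDR property alone. I would address this by a per-trajectory reparameterization of $\alpha$: define $\sigma_i(\bar x) := \inf\{s : \alpha_s(\bar x) \in C_{i-1}\}$, upper semicontinuous on the compact set $\partial X$, and precompose $\alpha$ with a continuous (in $\bar x$) rescaling that forces the first entry of $\alpha_{\cdot}(\bar x)$ into $C_{i-1}$ to occur precisely at $t = t_{i-1}$ for every $\bar x$. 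The resulting homotopy remains a $\mathcal{Z}$-set SDR, the required monotonicity holds, and condition $(\dagger\dagger)$ then follows directly from the formulae for $p$ above.
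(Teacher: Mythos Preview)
Your overall strategy differs from the paper's, and the treatment of $(\dagger\dagger)$ contains a genuine gap.

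For $(\dagger)$, your word-length approach is workable but needs more care: the claim that $gC$ is disjoint from $C_{|g|-N-1}$ for large $|g|$ does not follow from properness alone as you've stated it, since the target set $C_{|g|-N-1}$ varies with $g$. You need an auxiliary bound of the form ``only finitely many $h \in G$ satisfy $hK \cap K \neq \emptyset$'' together with explicit control on the thickenings (e.g.\ $C_n \subseteq B_{n+1}K$). This is repairable.

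The real problem is $(\dagger\dagger)$. You correctly identify the difficulty---getting $\alpha_t(\bar x) \notin C_{i-1}$ for $t < t_{i-1}$---but the proposed fix cannot work. A reparameterization of $\alpha$ in the time variable (even one depending continuously on $\bar x$) is a monotone change of parameter; it cannot prevent a trajectory from entering $C_{i-1}$, leaving it, and re-entering. If the original path $t \mapsto \alpha_t(\bar x)$ is not monotone with respect to the exhaustion, no rescaling will make it so. Moreover, you would need the first-entry times into \emph{every} $C_{i-1}$ to hit the prescribed values $t_{i-1}$ simultaneously and continuously in $\bar x$, and upper semicontinuity of $\sigma_i$ is not enough to produce a continuous reparameterization. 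Finally, your invocation of Lemma~\ref{slowedsdr} is a red herring: that lemma controls $\alpha_t$ on points \emph{inside} $C_i$, which says nothing about where trajectories from $\partial X$ land.

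The paper avoids all of this by \emph{interleaving} the construction of the exhaustion and the $t_i$'s rather than fixing the exhaustion first. Working with metric balls $B_\rho(x_0,r_i)$, it alternately (a) enlarges $r_i$ so that $B_\rho(x_0,r_i)$ swallows $\alpha(\partial X \times [t_{i-1},1])$ and every translate of a fixed fundamental compactum touching $B_\rho(x_0,r_{i-1})$, and (b) shrinks $t_i$ so that $\alpha(\partial X \times [0,t_i))$ misses $\overline{B_\rho(x_0,r_i)}$ entirely. Step (b) is always possible by uniform continuity of $\alpha$ on the compact set $\partial X \times [0,1]$: since $\alpha(\bar x,0)=\bar x \in \partial X$ and any compact subset of $X$ sits at positive $\overline{\rho}$-distance from $\partial X$, small $t$ keeps $\alpha(\partial X \times [0,t))$ away from that compact. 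Condition $(\dagger\dagger)$ then falls out immediately from (a) and (b), with no reparameterization needed; condition $(\dagger)$ follows from the ``swallowing translates'' clause in (a). The moral is that you should let $\alpha$ dictate the exhaustion rather than the other way around.
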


\begin{proof}  Let $t_0:=1$.  Let $C_1$ be a connected compact subset of $X$ containing $x_0$ with the property that the translates of $C_1$ cover $X$, i.e. $\bigcup\limits_{g\in G}gC_1=X$.  Since $C_1$ is compact, there exists $r_1>0$ such that $B_{\rho}(x_0, r_1)\supseteq C_1$.
\Par
Let $t_1\in(0,1)$ be such that $\alpha(\partial X\times [0,t_1))\cap\overline{B_{\rho}(x_0,r_1)}=\emptyset$, and choose $r_2'$ so that $B_{\rho}(x_0, r_2')\supseteq\alpha(\partial X\times[t_1,1])$.
\Par
Choose $r_2$ such that
\begin{center}$B_{\rho}(x_0, r_2)\supseteq\overline{B_{\rho}(x_0,r_2')}\cup\Bigl(\cup\left\{gC_1\ |\ gC_1\cap B_{\rho}(x_0,r_1)\neq\emptyset\right\}\Bigr)$\end{center}
and $t_2\in(0,1)$ such that $\alpha(\partial X\times[0,t_2))\cap\overline{B_{\rho}(x_0,r_2)}=\emptyset$.
\Par
Continue inductively.
\Par
For each i, let $r_i'>0$ satisfy $B_{\rho}(x_0,r_i')\supseteq\alpha(\partial X\times[t_{i-1},1])$.  Then choose $r_i>0$ so that
\begin{eqnarray*}B_{\rho}(x_0,r_i)\supseteq\overline{B_{\rho}(x_0,r_i')}\cup\bigl(\cup\left\{gC_1\ |\ gC_1\cap B_{\rho}(x_0,r_{i-1})\neq\emptyset\right\}\bigr)\text{ \hspace{.1in} } (\star)\end{eqnarray*}
and $t_i\in(0,1)$ such that
\begin{center}$\alpha(\partial X\times [0,t_i))\cap\overline{B_{\rho}(x_0,r_i)}=\emptyset$. \hspace{1in} ($\star\star$)\end{center}
We have $0<r_1<r_2<\cdots$ with $r_i\rightarrow\infty$ as $i\rightarrow\infty$, so that $X=\bigcup\limits_{i=1}^{\infty}B_{\rho}(x_0,r_i)$.
\Par
Moreover, we have $1=t_0>t_1>t_2>\cdots>0$ with $t_i\rightarrow 0$ as $i\rightarrow\infty$.
\Par
Define $p:X\rightarrow [0,\infty)$ to be a piecewise rescaling of the map $\rho(\cdot,x_0)$ measuring distance to the point $x_0$ in such a way that $p(x_0)=0$, and $p(\overline{B_{\rho}(x_0,r_i)}-B_{\rho}(x_0,r_{i-1}))=[i-1,i]$.  Since $(X,\rho)$ is a proper metric space, it is clear that $p$ is a proper map.

\begin{claim}  \label{pworks}The map $p:X\rightarrow[0,\infty)$ satisfies $(\dagger)$.\end{claim}

\textit{Proof.}  First we note that $(\dagger)$ holds for $C_1$:
\Par
For each $g\in G$, let $i_g:=\min\left\{k\in\mathbb{N}\ |\ gC_1\cap B_{\rho}(x_0,r_k)\neq\emptyset\right\}$.  Then, by definition, $p(gC_1)\subseteq [i_g-1,i_g+1]$.  Thus
\begin{center}$\max\left\{p(x)-p(x')\ |\ x,x'\in gC_1\right\}\leq 2$ for every $g\in G$,\end{center}
so $R_p(C_1)\leq 2$.
\Par
Now consider any compactum $C$ in $X$.  We may assume, without loss of generality, that $x_0\in C$.  Since $C$ is compact, it is contained in a metric ball in $X$, so there is a minimal finite collection $\left\{g_1,g_2,\ldots,g_{k_C}\right\}\subseteq G$ so that $\left\{g_1C_1,g_2C_1,\ldots,g_{k_C}C_1\right\}$ covers $C$ and $\bigcup_{i=1}^{k_C}g_iC_1$ is connected.  Then, in fact, given any $g\in G$, $\displaystyle\bigcup_{i=1}^{k_C}gg_iC_1$ is connected and contains $gC$.  Therefore, by connectedness and a simple inductive argument, 
\begin{center}$\max\left\{p(x)-p(x')\ |\ x,x'\in gC\right\}\leq 2k_C$ for all $g\in G$.\end{center}

\noindent Hence $R_p(C)\leq 2k_C<\infty$ for any compactum $C$ in $X$.
\Par
\noindent By Claim \ref{pworks}, we have constructed a proper map $p:X\rightarrow [0,\infty)$ satisfying $(\dagger)$ for any compactum $C$ in $X$.
\Par
Moreover, ($\star$) and ($\star\star$) guarantee that ($\dagger\dagger$) is satisfied by the constructed $p$.
\end{proof}

\noindent Certainly we may define, using the same methods, a proper map $q:Y\rightarrow [0,\infty)$ satisfying conditions analogous to ($\dagger$) and ($\dagger\dagger$).

\begin{lemma}  \label{alphahatbetahat} There are reparametrizations $\widehat{\alpha}$ and $\widehat{\beta}$ of the homotopies $\alpha$ and $\beta$ so that $p(\widehat{\alpha}(\overline{x},t))\in\left[\frac{1}{t}-1,\frac{1}{t}+2\right]$ and $q(\widehat{\beta}(\overline{y},t))\in\left[\frac{1}{t}-1,\frac{1}{t}+2\right]$ for all $t\in(0,1]$, $\overline{x}\in\partial{X}$, $\overline{y}\in\partial{Y}$.\end{lemma}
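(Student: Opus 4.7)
The plan is to realize each reparametrization as a pure time-change $\widehat{\alpha}(\overline{x},t):=\alpha(\overline{x},\phi(t))$, where $\phi:[0,1]\to[0,1]$ is an increasing continuous function with $\phi(0)=0$ and $\phi(1/i)=t_i$ for every $i\geq 1$, interpolated monotonically (linearly, say) on each interval $[1/(i+1),1/i]$. The reparametrization $\widehat{\beta}$ is constructed in exactly the same way, using the proper map $q$ and the analogous sequence furnished by the $Y$-version of Lemma \ref{defpq}.

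To verify the conclusion for $\widehat{\alpha}$, fix $t\in(0,1]$ and let $i\geq 1$ be the unique integer with $t\in[1/(i+1),1/i]$, so $\phi(t)\in[t_{i+1},t_i]$. Property $(\dagger\dagger)$ at index $i+1$ gives $p(\alpha(\overline{x},s))\in(i,i+2]$ for every $s\in[t_{i+1},t_i)$. The only endpoint needing extra care is $s=t_i$: applying $(\dagger\dagger)$ at index $i$ gives $p(\alpha(\overline{x},t_i))\in(i-1,i+1]$, while continuity of $s\mapsto p(\alpha(\overline{x},s))$ combined with the bound $(i,i+2]$ on $[t_{i+1},t_i)$ forces $p(\alpha(\overline{x},t_i))$ to be a limit of points from $[i,i+2]$ and hence to lie in $[i,i+2]$. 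Intersecting the two constraints yields $p(\alpha(\overline{x},t_i))\in[i,i+1]$. In every case $p(\widehat{\alpha}(\overline{x},t))\in[i,i+2]$, and since $1/t\in[i,i+1]$ immediately yields $1/t-1\leq i$ and $1/t+2\geq i+2$, we obtain $[i,i+2]\subseteq[1/t-1,1/t+2]$, completing the verification.

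The crucial (and essentially only delicate) point is the index shift $\phi(1/i)=t_i$. The more natural-looking choice $\phi(1/i)=t_{i-1}$, which would give $\phi(1)=1$, fails: on $[1/(i+1),1/i]$ it sends $\phi(t)$ into $[t_i,t_{i-1}]$, where $(\dagger\dagger)$ only guarantees $p\in[i-1,i+1]$, yet the target $[1/t-1,1/t+2]$ requires $p\geq 1/t-1$, which for $t$ near the left endpoint of $[1/(i+1),1/i]$ exceeds $i-1$. Shifting the matching by one index puts $p$ into $[i,i+2]\subseteq[1/t-1,1/t+2]$ for every admissible $t$. The mild cost is that $\phi(1)=t_1<1$, so $\widehat{\alpha}_1$ is the map $\overline{x}\mapsto\alpha(\overline{x},t_1)$ rather than the constant map to $x_0$; nothing in the statement of Lemma \ref{alphahatbetahat} precludes this, and the required bound at $t=1$ holds since $p(\alpha(\overline{x},t_1))\in[1,2]\subseteq[0,3]$.
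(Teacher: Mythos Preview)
Your proof is correct and follows the same strategy as the paper --- a piecewise-linear time change $t\mapsto\phi(t)$ with $\phi(1/i)$ pinned to one of the $t_j$, followed by an appeal to $(\dagger\dagger)$. In fact you have been more careful than the paper: the paper sets $\xi(1/i)=t_{i-1}$ and then asserts that $\xi$ carries $[1/i,1/(i-1))$ into $[t_i,t_{i-1})$, but with that definition the image is actually $[t_{i-1},t_{i-2})$, whence $(\dagger\dagger)$ only gives $p\in(i-2,i]$ and the lower bound $p\ge 1/t-1$ need not hold. Your remark that the ``natural'' choice $\phi(1/i)=t_{i-1}$ fails is precisely this observation, and your shift $\phi(1/i)=t_i$ is the clean repair; the continuity argument pinning down $p(\alpha(\overline{x},t_i))\in[i,i+1]$ is valid.

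One small caveat, irrelevant to the lemma itself but worth flagging: with your $\phi$ one gets $\widehat\alpha_1=\alpha_{t_1}$ rather than the constant map to $x_0$. The paper's later definition of $\gamma'$ implicitly uses $\widehat\alpha(\,\cdot\,,1)\equiv x_0$ and $\widehat\beta(\,\cdot\,,1)\equiv y_0$ (so that, e.g., $\beta'(\overline y,0)=y_0$ is independent of $\overline y$ and $\gamma'$ is well defined on the join); if you carry your $\phi$ forward you will need to tweak that step.
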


\begin{proof}  Note that, using the notation from Lemma \ref{defpq}, we have, for any $t\in(0,1)$, some $i\in\mathbb{N}$ so that $t_i\leq t <t_{i-1}$.  Condition ($\dagger\dagger$) gives, then, that $p(\alpha(\partial X\times\left\{t\right\}))\subseteq(i-1,i+1]$.
\Par
Let $\xi:[0,1]\rightarrow[0,1]$ be the piecewise linear homeomorphism satisfying $\xi(0)=0$, $\xi(1)=1$, and $\xi(\frac{1}{i})=t_{i-1}$ for all $i\in\mathbb{N}$, and define $\widehat{\alpha}:\widehat{X}\times[0,1]\rightarrow\widehat{X}$ by $\widehat{\alpha}(\widehat{x},t):=\alpha(\widehat{x},\xi(t))$ for all $\widehat{x}\in\widehat{X}$ and all $t\in[0,1]$.
\Par
Now we have arranged that, given $t\in[0,1]$ and $i\in\mathbb{N}$ such that $t\in[\frac{1}{i},\frac{1}{i-1})$, $\xi(t)\in[t_i,t_{i-1})$, so
\begin{center} $p(\widehat{\alpha}(\overline{x},t))=p(\alpha(\overline{x},\xi(t)))\in(i-1,i+1]\subseteq(\frac{1}{t}-1,\frac{1}{t}+2]$\end{center}
for any $\overline{x}\in\partial X$.
\Par
Moreover, $p(\widehat{\alpha}(\overline{x},1))=p(\alpha(\overline{x},1))=p(x_0)=0\in[0,3]$ for any $\overline{x}\in\partial X$, so $\widehat{\alpha}$ satisfies the requirement at $t=1$.
\Par
Define $\widehat{\beta}$ similarly, and the result holds.
\end{proof}

\begin{definition} We define $\widehat{X\times Y}$ as follows:
\Par
The \textbf{join} $\partial X\ast\partial Y$ of the boundaries $\partial X$ and $\partial Y$ is:
\\
\tab $\partial X\ast\partial Y=\partial X\times \partial Y\times [0,\infty]/\sim$,
\\
where $\sim$ is the equivalence relation generated by $\left\langle\overline{x},\overline{y},\mu\right\rangle\sim\left\langle\overline{x}',\overline{y}',\mu'\right\rangle$ if and only if ($\mu=\mu'=0$ and $\overline{x}=\overline{x}'$) or ($\mu=\mu'=\infty$ and $\overline{y}=\overline{y}')$.
\Par
We will denote by $\left\langle\overline{x},0\right\rangle$ the equivalence class under $\sim$ containing $\left\langle\overline{x},\overline{y},0\right\rangle$ for all $\overline{y}\in\partial Y$, and by $\left\langle\overline{y},\infty\right\rangle$ the equivalence class containing $\left\langle\overline{x},\overline{y},\infty\right\rangle$ for all $\overline{x}\in\partial X$.
\Par
Now we define a slope function $\mu:X\times Y\rightarrow [0,\infty]$ by \begin{displaymath}
   \mu(x,y) = \left\{
     \begin{array}{lr}
       \displaystyle\frac{q(y)}{p(x)} & \text{if }p(x)\neq 0\\
       \infty & \text{if }p(x)=0
     \end{array}
   \right.
\end{displaymath}
\Par
As a set, $\widehat{X\times Y}:=(X\times Y)\cup(\partial X\ast\partial Y)$.
\Par
The topology on $\widehat{X\times Y}$ is generated by the basis $\mathcal{B}:=\mathcal{B}_0\cup\mathcal{B}_{\partial}$, where
\begin{center} $\mathcal{B}_0:=\left\{U\times V\ |\ U\text{ is open in }X, V\text{ is open in }Y\right\}$
\\
$\mathcal{B}_{\partial}:=\left\{U(z,\epsilon)\ |\ z\in\partial X\ast\partial Y,\epsilon>0\right\}$
\end{center}
\Par
The neighborhoods $U(z,\epsilon)$ of boundary points are defined by:
\Par
Given $\overline{x}\in\partial X$ and $\epsilon >0$,

\begin{center} $U(\left\langle\overline{x},0\right\rangle,\epsilon):=\left\{(x,y)\ |\ \overline{\rho}(x,\overline{x}),\mu(x,y)<\epsilon\right\}$
\\
$\cup\left\{\left\langle\overline{x}',\overline{y}',\mu'\right\rangle\ |\ \overline{\rho}(\overline{x},\overline{x}'),\mu'<\epsilon\right\}$
\\
$\cup\left\{\left\langle\overline{x}',0\right\rangle\ |\ \overline{\rho}(\overline{x},\overline{x}')<\epsilon\right\}$
\end{center}

\noindent For $\overline{y}\in\partial Y$ and $\epsilon>0$,

\begin{center}$U(\left\langle\overline{y},\infty\right\rangle,\epsilon):=\{(x,y)\ |\ \overline{\tau}(y,\overline{y}),\frac{1}{\mu(x,y)}<\epsilon\}$
\\
$\cup\{\left\langle\overline{x}',\overline{y}',\mu'\right\rangle\ |\ \overline{\tau}(\overline{y},\overline{y}'),\frac{1}{\mu'}<\epsilon\}$
\\
$\cup\left\{\left\langle\overline{y}',\infty\right\rangle\ |\ \overline{\tau}(\overline{y},\overline{y}')<\epsilon\right\}$\end{center}

\noindent For $\left\langle\overline{x},\overline{y},\mu\right\rangle\in\partial X\times\partial Y\times (0,\infty)$ and $\epsilon<\mu$,

\begin{center}$U(\left\langle\overline{x},\overline{y},\mu\right\rangle,\epsilon):=\left\{(x,y)\ |\ \overline{\rho}(x,\overline{x}),\overline{\tau}(y,\overline{y}),\left|\mu(x,y)-\mu\right|<\epsilon\right\}$
\\
$\cup\left\{\left\langle\overline{x}',\overline{y}',\mu'\right\rangle\ |\ \overline{\rho}(\overline{x},\overline{x}'),\overline{\tau}(\overline{y},\overline{y}'),\left|\mu-\mu'\right|<\epsilon\right\}$\end{center}
\textit{Note:}  Recall that $\overline{\rho}$ and $\overline{\tau}$ are metrics on the compactifications $\widehat{X}$ and $\widehat{Y}$, respectively.\end{definition}

\begin{proposition}  $\widehat{X\times Y}$ is a compactification of $X\times Y$.\end{proposition}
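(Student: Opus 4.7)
The plan is to verify four properties: (a) the subspace topology on $X \times Y$ inherited from $\widehat{X \times Y}$ agrees with its original (product) topology; (b) $X \times Y$ is dense in $\widehat{X \times Y}$; (c) $\widehat{X \times Y}$ is Hausdorff; and (d) $\widehat{X \times Y}$ is compact.

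For (a), the basis elements in $\mathcal{B}_0$ intersected with $X \times Y$ give exactly the product topology, and each $U(z,\epsilon) \cap (X \times Y)$ is open in the product topology because the slope function $\mu \colon X \times Y \to [0,\infty]$ is continuous (since $p$ and $q$ are continuous) and the $\overline{\rho}$- and $\overline{\tau}$-conditions define open subsets of $X$ and $Y$. For (b), use the reparametrized homotopies $\widehat{\alpha}$ and $\widehat{\beta}$ from Lemma \ref{alphahatbetahat} to construct approximating sequences in $X \times Y$: for $\langle \overline{x}, 0\rangle$, take $x_n := \widehat{\alpha}(\overline{x}, 1/n)$ and $y_n := y_0$, so that $\overline{\rho}(x_n,\overline{x}) \to 0$ while $\mu(x_n,y_n) = q(y_0)/p(x_n) \to 0$ because $p(x_n) \to \infty$; for an interior boundary point $\langle \overline{x}, \overline{y}, \mu\rangle$ with $0 < \mu < \infty$, pick $t_n \to 0$ with $t_n < \mu$ and set $x_n := \widehat{\alpha}(\overline{x}, t_n)$, $y_n := \widehat{\beta}(\overline{y}, t_n/\mu)$, after which Lemma \ref{alphahatbetahat} forces $\mu(x_n,y_n) \to \mu$; the case $\langle \overline{y}, \infty\rangle$ is symmetric.

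For (c), Hausdorffness is routine case analysis. Any two interior points are separated in the product topology on $X \times Y$. A boundary point and an interior point $(x,y)$ can be separated by combining the fact that $\overline{\rho}(x,\partial X) > 0$ and $\overline{\tau}(y,\partial Y) > 0$ with the finiteness of $p(x)$ and $q(y)$ to shrink the basic boundary neighborhood away from $(x,y)$. Two boundary points with differing $\overline{x}$- or $\overline{y}$-coordinates are separated by small $\overline{\rho}$- or $\overline{\tau}$-balls, and two boundary points sharing coordinates but with distinct slope parameters are separated by disjoint slope intervals (exploiting the symmetry between $\mu$ near $0$ and $1/\mu$ near $0$ at the two extreme endpoints).

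The main obstacle is (d), compactness. First observe that the subspace topology on $\partial X \ast \partial Y$ inherited from $\widehat{X \times Y}$ coincides with the quotient topology from $\partial X \times \partial Y \times [0,\infty]$, so $\partial X \ast \partial Y$ is compact. Given any open cover $\mathcal{U}$ of $\widehat{X \times Y}$, extract finitely many $U_1, \ldots, U_n \in \mathcal{U}$ covering $\partial X \ast \partial Y$. Let $K := \widehat{X\times Y} \setminus (U_1 \cup \cdots \cup U_n)$; this is closed and disjoint from $\partial X \ast \partial Y$, so $K \subseteq X \times Y$. The key step is to show that $K$ is compact in $X \times Y$. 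Given any sequence $(x_k, y_k)$ in $K$, use compactness of $\widehat{X} \times \widehat{Y}$ to pass to a subsequence with $x_k \to \widehat{x}$ in $\widehat{X}$ and $y_k \to \widehat{y}$ in $\widehat{Y}$; if either coordinate limit lies in a boundary, pass to a further subsequence with $\mu(x_k,y_k) \to \mu \in [0,\infty]$, and then $(x_k,y_k)$ converges in $\widehat{X \times Y}$ to a point of $\partial X \ast \partial Y$ which lies in some $U_j$, contradicting $(x_k,y_k) \in K$ for large $k$. Hence $\widehat{x} \in X$, $\widehat{y} \in Y$, and the subsequence converges in $X \times Y$ to a point of $K$. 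Since $X \times Y$ is metrizable, sequential compactness of $K$ gives compactness, so finitely many elements of $\mathcal{U}$ cover $K$; together with $U_1, \ldots, U_n$ these yield a finite subcover of $\widehat{X \times Y}$.
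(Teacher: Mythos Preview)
Your proof is correct, and you are actually more thorough than the paper: the paper only checks that the subspace topology on $X\times Y$ is the original one and that $\widehat{X\times Y}$ is compact, silently taking density and Hausdorffness for granted, whereas you verify all four items.

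The genuine difference is in the compactness step. The paper argues constructively: it takes a finite subcover $\{U_i\}$ of $\partial X\ast\partial Y$, proves a Lebesgue-type lemma producing $\delta>0$ with $U(z,\delta)\subseteq U_i$ for every boundary point $z$, and then explicitly builds a compact set $C=(C_X\times P_{C_X})\cup(Q_{C_Y}\times C_Y)\subseteq X\times Y$ (using properness of $p$ and $q$) outside of which every point lies in some $U(z,\delta)$. Your route is more qualitative: you show the leftover set $K$ is sequentially compact in $X\times Y$ by passing to convergent subsequences in $\widehat{X}\times\widehat{Y}\times[0,\infty]$ and arguing that any limit touching $\partial X$ or $\partial Y$ would force $(x_k,y_k)$ to converge in $\widehat{X\times Y}$ to a point of $\partial X\ast\partial Y$, contradicting $K\cap\partial X\ast\partial Y=\emptyset$. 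Your argument is shorter and avoids the explicit $\delta$-bookkeeping; the paper's argument, on the other hand, yields the uniform $\delta$ and the concrete compact $C$, which are exactly the ingredients reused in the proof of Proposition~\ref{null} (the null condition). So the paper's approach is doing double duty, while yours is self-contained but would not directly feed into the later proposition.
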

\begin{proof}  We first observe that the topology inherited by $X\times Y$ as a subspace of $\widehat{X\times Y}$ is the same as the original topology on $X\times Y$.
\Par
It remains to show that $\widehat{X\times Y}$ is compact.
\Par
Let $\mathcal{U}$ be an open cover of $\widehat{X\times Y}$ by basic open sets.  Since $\partial X\ast \partial Y$ is compact, we may choose a finite subset $\mathcal{U}_{\partial}=\left\{U_i\right\}_{i=1}^k$ of $\mathcal{U}$ which covers $\partial X\ast \partial Y$.
\\
\begin{claim}  \label{delta} There exists $1>\delta>0$ such that for every $z\in\partial X\ast \partial Y$ there is some $i\in\left\{1,\ldots,k\right\}$ such that $U(z,\delta)\subseteq U_i$.\end{claim}

\noindent\textit{Proof.}  For each $i=1,\ldots, k$, define $\eta_i:\partial X\ast\partial Y\rightarrow[0,\infty)$ by
\begin{displaymath}
   \eta_i(z) := \left\{
     \begin{array}{lr}
       0 & \text{if }z\notin U_i\\
       \sup\left\{\eta>0\ |\ U\left(z,\eta\right)\subseteq U_i\right\}& \text{if }z\in U_i
     \end{array}
   \right.
\end{displaymath}
\\
Then each $\eta_i$ is a continuous function, and for each $z\in \partial X\ast\partial Y$, there is some $i\in\left\{1,\ldots, k \right\}$ so that $\eta_i(z)>0$.
\Par
Now $\eta:=\max\left\{\eta_i\ |\ i=1,\ldots k\right\}$ is a continuous and strictly positive function on the compact set $\partial X\ast\partial Y$, so
\begin{equation*}\delta:=\min\left\{\delta',\frac{1}{2}\right\}\end{equation*}
where
\begin{equation*}\delta':=\min\left\{\eta(z)\ |\ z\in \partial X\ast\partial Y\right\}\end{equation*}

\noindent is a positive number which satisfies the desired condition.
\Par
We will show that there is a compactum $C\subseteq X\times Y$ such that if $(x,y)\notin C$, then $(x,y)\in U(z,\delta)$ for some $z\in\partial X\ast\partial Y$.
\\

\begin{claim}  \label{claimX}Given a compactum $J\subseteq X$, there is a compactum $P_J\subseteq Y$ such that if $(x,y)\in J\times (Y\backslash P_J)$ then $(x,y)\in U(\left\langle\overline{y},\infty\right\rangle,\delta)$ for some $\overline{y}\in\partial Y$.\end{claim}

\noindent\textit{Proof.}  Let $M_J:=\max\left\{p(x)\ | \ x\in J\right\}$, and choose $P_J$ sufficiently large so that if $y\notin P_J$, then $\overline{\tau}(y,\partial Y)<\delta$ and $q(y)>M_J\cdot\frac{1}{\delta}$.
\Par
Then if $(x,y)\in J\times (Y\backslash P_J)$, there is some $\overline{y}\in\partial Y$ such that $\overline{\tau}(y,\overline{y})<\delta$, and $\mu(x,y)=\frac{q(y)}{p(x)}>\frac{1}{\delta}$, so $(x,y)\in U(\left\langle\overline{y},\infty\right\rangle,\delta)$.
\Par
Similarly, we have:

\begin{claim}  \label{claimY}Given a compactum $K\subseteq Y$, there is a compactum $Q_K\subseteq X$ such that if $(x,y)\in (X\backslash Q_K)\times K$, then $(x,y)\in U(\left\langle\overline{x},0\right\rangle,\delta)$ for some $\overline{x}\in\partial X$.\end{claim}
\vspace{.2in}

\noindent We define
\begin{equation*}C:=(C_X\times P_{C_X})\cup(Q_{C_Y}\times C_Y),\end{equation*}

\noindent where
\begin{equation*}C_X:=\widehat{X}\backslash B_{\overline{\rho}}(\partial X,\delta)\text{ and }
C_Y:=\widehat{Y}\backslash B_{\overline{\tau}}(\partial Y,\delta).\end{equation*}

\noindent Now suppose $(x,y)\in (X\times Y)\backslash C$.  If $x\in C_X$ or $y\in C_Y$, then Claim \ref{claimX} or \ref{claimY} gives the result.  Otherwise we have $x\notin C_X$ and $y\notin C_Y$, which implies that there are $\overline{x}\in\partial X$ and $\overline{y}\in\partial Y$ such that $\overline{\rho}(x,\overline{x}),\overline{\tau}(y,\overline{y})<\delta$.  Therefore $(x,y)\in U(\left\langle\overline{x},\overline{y},\mu(x,y)\right\rangle,\delta)$, and the proposition is proved.
\end{proof}

Let us clarify here a future abuse of notation: by $\partial X\subseteq\partial X\ast \partial Y$ (respectively $\partial Y\subseteq\partial X\ast \partial Y$), we mean the homeomorphic copy $\partial X\times\partial Y\times \left\{0\right\}/\sim$ (respectively $\partial X\times\partial Y\times \left\{\infty\right\}/\sim$) of $\partial X$ (respectively $\partial Y$) in $\partial X\ast\partial Y$.
\begin{proposition}\label{null}$\widehat{X\times Y}$ satisfies the null condition with respect to the action of $G\times H$ on $X\times Y$.\end{proposition}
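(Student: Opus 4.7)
The plan is to control the slope function $\mu(x,y)=q(y)/p(x)$ on each translate $(g,h)C$, exploiting the finite oscillation bounds $R_X:=R_p(C_X)$ and $R_Y:=R_q(C_Y)$ from Lemma~\ref{defpq} applied to the projections $C_X:=\pi_X(C)$ and $C_Y:=\pi_Y(C)$. Since $(g,h)C\subseteq gC_X\times hC_Y$, it suffices to prove the null condition for the product $C_X\times C_Y$.

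First I would perform a Lebesgue-type reduction on the cover. By compactness of $\widehat{X\times Y}$ the cover $\mathcal{U}$ may be taken finite and by basic opens, and a compactness argument on $\partial X\ast\partial Y$ then furnishes a single $\delta\in(0,1/2)$ such that every basic neighborhood of one of the three forms $U(\langle\overline{x},0\rangle,3\delta)$, $U(\langle\overline{y},\infty\rangle,3\delta)$, or $U(\langle\overline{x},\overline{y},\mu\rangle,\delta)$ with $\mu\in[2\delta,1/(2\delta)]$ is contained in some element of $\mathcal{U}$.

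Next I would construct the finite set $\Gamma$. The null condition on $(\widehat{X},\partial X)$ together with properness of the $G$-action (which, with cocompactness, forces $P_g:=\min p(gC_X)$ to exceed any prescribed threshold off a finite set, since $p^{-1}([0,N])$ is compact) yields a finite $\Gamma_X\subseteq G$ so that for $g\notin\Gamma_X$ one has $gC_X\subseteq B_{\overline{\rho}}(\overline{x}_g,\delta)$ for some $\overline{x}_g\in\partial X$ and $P_g>N$, where $N$ is chosen so that $R_Y/N<\delta$ and $R_X/N<2\delta^2$; pick $\Gamma_Y\subseteq H$ symmetrically. Setting $L:=\max_{g\in\Gamma_X}\max p(gC_X)$ and $M:=\max_{h\in\Gamma_Y}\max q(hC_Y)$, enlarge $\Gamma_X$ and $\Gamma_Y$ once more (still finitely) so that $P_g>M/\delta$ whenever $g\notin\Gamma_X$ and $Q_h>L/\delta$ whenever $h\notin\Gamma_Y$. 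Take $\Gamma:=\Gamma_X\times\Gamma_Y$ and $\mu_0(g,h):=Q_h/P_g$. A case analysis for $(g,h)\notin\Gamma$ then finishes the proof: in the mixed case $g\in\Gamma_X$, $h\notin\Gamma_Y$ one has $\mu\ge Q_h/L>1/\delta$ on $(g,h)C$, so $(g,h)C\subseteq U(\langle\overline{y}_h,\infty\rangle,\delta)$ (and $g\notin\Gamma_X$, $h\in\Gamma_Y$ is symmetric); when both $g\notin\Gamma_X$ and $h\notin\Gamma_Y$, the estimate $|\mu-\mu_0|\le\max(R_Y,\mu_0R_X)/P_g<\delta$ combined with the trichotomy $\mu_0\le 2\delta$, $\mu_0\in(2\delta,1/(2\delta))$, or $\mu_0\ge 1/(2\delta)$ places $(g,h)C$ into $U(\langle\overline{x}_g,0\rangle,3\delta)$, $U(\langle\overline{x}_g,\overline{y}_h,\mu_0\rangle,\delta)$, or $U(\langle\overline{y}_h,\infty\rangle,3\delta)$ respectively, each contained in some cover element by the choice of $\delta$.

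The main obstacle I anticipate is the Lebesgue-type reduction in the first step: the neighborhoods $U(\langle\overline{x},\overline{y},\mu\rangle,\epsilon)$ are only defined when $\epsilon<\mu$ and degenerate as $\mu\to 0$ or $\mu\to\infty$, so a naive application of the Lebesgue number lemma does not directly produce a uniform $\delta$. The workaround is to stratify $\partial X\ast\partial Y$ into the $\partial X$-stratum, the $\partial Y$-stratum, and the intermediate locus $\partial X\times\partial Y\times[2\delta,1/(2\delta)]$, apply compactness on each separately, and exploit that points $\langle\overline{x},\overline{y},\mu\rangle$ with $\mu<\delta$ are absorbed into neighborhoods of the $\partial X$-stratum (dually for $\mu$ large). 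Once $\delta$ is secured, the slope estimates used above follow routinely from $(\dagger)$.
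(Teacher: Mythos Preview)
Your proposal is correct and follows essentially the same strategy as the paper: reduce to product compacta, extract a Lebesgue-type $\delta$ for the basic neighborhoods of $\partial X\ast\partial Y$, and then perform a case analysis on slopes using the oscillation bounds $R_p,R_q$ from Lemma~\ref{defpq} together with the properness of $p,q$ and the null conditions on the factors. The paper organizes the argument around a compact set $(J\times P_J)\cup(Q_K\times K)$ and splits according to whether some translate point has slope in $[\delta,1/\delta]$ (Claims~\ref{PJ}--\ref{getbiggamma}), whereas you build $\Gamma=\Gamma_X\times\Gamma_Y$ directly and trichotomize on the reference slope $\mu_0=Q_h/P_g$; these are cosmetic rearrangements of the same computation, and your slope estimate $|\mu-\mu_0|\lesssim (R_Y+\mu_0 R_X)/P_g$ is exactly the paper's displayed inequality in Case~1 of Claim~\ref{getbiggamma}.

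Your anticipated obstacle is well spotted: the basic sets $U(\langle\overline{x},\overline{y},\mu\rangle,\epsilon)$ are only defined for $\epsilon<\mu$, so a naive Lebesgue-number argument on all of $\partial X\ast\partial Y$ would return $\delta=0$. The paper's Claim~\ref{delta} is phrased for ``every $z\in\partial X\ast\partial Y$'' but is in fact only invoked for $z$ with $\mu\in[\delta,1/\delta]$ or $z$ on one of the two strata, so the issue is silently avoided; your explicit stratification into the $\partial X$-stratum, the $\partial Y$-stratum, and the compact slab $\mu\in[2\delta,1/(2\delta)]$ is the honest way to carry this out and is precisely what the paper's later case split (Case~1 versus Case~2 of Claim~\ref{getbiggamma}) is implicitly relying on.
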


\begin{proof}  Consider a compactum $C\times D$ in $X\times Y$, where $C$ is compact in $X$ and $D$ is compact in $Y$.  Let $\mathcal{U}$ be an open cover of $\widehat{X\times Y}$ by basic open sets.  We may assume, without loss of generality, that $\mathcal{U}$ is finite, since $\widehat{X\times Y}$ is compact.
\Par
Let $\mathcal{U}_{\partial}=\left\{U_i\right\}_{i=1}^k$ denote the finite subset of $\mathcal{U}$ which covers $\partial X\ast \partial Y$.
\Par
Choose $1>\delta>0$ as in Claim \ref{delta}.
\Par
\noindent\textit{Notation:}  Let $\text{diam}_{\mu}(A):=\sup\left\{\mu(x,y)-\mu(x',y')\ |\ (x,y),(x',y')\in A\right\}$ for any $A\subseteq X\times Y$.
\Par
We also denote by $W$ the set of points $\left\langle \overline{x},\overline{y},\mu\right\rangle$ in $\partial X\ast\partial Y$ with $0<\mu<\infty$.
\\ 

\begin{claim}  \label{close}If $\left(gC\times hD\right)\cap U\left(w,\frac{\delta}{2}\right)\neq\emptyset$ for some $w\in W$ and
\\ $\text{diam}_{\overline{\rho}}gC,\text{diam}_{\overline{\tau}}hD,\text{diam}_{\mu}\left(gC\times hD\right)<\frac{\delta}{2}$, then there is some
\\ $i\in\left\{1,\ldots, k\right\}$ so that $gC\times hD\subseteq U_i$.\end{claim}

\textit{Proof.}  This claim follows easily from simple calculations using the triangle inequality and the definition of $U\left(w,\epsilon\right)$.
\Par

Define
\begin{eqnarray*}R_p:=\sup\left\{\max\left\{p(x)-p(x')\ |\ x,x'\in gC\right\}\ |\ g\in G\right\}
\\
R_q:=\sup\left\{\max\left\{q(y)-q(y')\ |\ y,y'\in hD\right\}\ |\ h\in H\right\}\end{eqnarray*}
\\
Note that by Lemma \ref{defpq}, both $R_p$ and $R_q$ are finite.
\\
\begin{claim}  \label{PJ} Given a compactum $J\subseteq X$, there is a compactum $P_J\subseteq Y$ so that if $(gC\times hD)\cap(J\times P_J)=\emptyset$, but $gC\cap J\neq\emptyset$, then there is some $\overline{y}\in\partial Y$ such that $gC\times hD\subseteq U(\left\langle\overline{y},\infty\right\rangle,\delta)$.\end{claim}

\textit{Proof.}  Since the action of $G$ on $X$ is proper, then $\left|\left\{g\in G\ |\ gC\cap J\neq\emptyset\right\}\right|<\infty$, so $M_J:=\max\left\{p(x)\ |\ x\in gC,\ gC\cap J\neq\emptyset\right\}<\infty$ since $C$ is compact.
\Par
Choose $P_J$ sufficiently large so that if $hD\not\subseteq P_J$, then
\begin{eqnarray} q(y)>M_J\cdot \frac{1}{\delta} \ \ \forall y\in hD\label{i}
\\
\overline{\tau}(hD,\partial Y)<\frac{\delta}{2}\label{ii}
\\
\text{diam}_{\overline{\tau}}hD<\frac{\delta}{2}\label{iii}\end{eqnarray}
\\
Note that (\ref{i}) can be achieved by the properness of the function $q$, (\ref{ii}) by cocompactness of the action of $H$ on $Y$, and (\ref{iii}) by the fact that $\widehat{Y}$ satisfies the null condition with respect to the action of $H$ on $Y$.
\Par
Now if $(gC\times hD)\cap(J\times P_J)=\emptyset$ and $gC\cap J\neq \emptyset$, then $hD\not\subseteq P_J$, so by (\ref{ii}) and (\ref{iii}), we have $hD\subseteq B_{\overline{\tau}}(\overline{y},\delta)$ for some $\overline{y}\in\partial Y$.
\Par
Therefore for any $(x,y)\in gC\times hD$, we have \begin{center}$\overline{\tau}(y,\overline{y})<\delta$\end{center}
and 
\begin{center}$\displaystyle\mu(x,y)=\frac{q(y)}{p(x)}>\frac{M_J\cdot \frac{1}{\delta}}{M_J}=\frac{1}{\delta}$.\end{center}
Hence, $gC\times hD\subseteq U\left(\left\langle\overline{y},\infty\right\rangle,\delta\right)$, and the claim is proved.
\Par
Clearly we may use analogous techniques to obtain:

\begin{claim}  \label{QK}Given a compactum $K\subseteq Y$, there is a compactum $Q_K\subseteq X$ so that if $(gC\times hD)\cap(K\times Q_K)=\emptyset$, but $hD\cap K\neq\emptyset$, then there is some $\overline{x}\in\partial X$ such that $gC\times hD\subseteq U(\left\langle\overline{x},0\right\rangle,\delta)$.\end{claim}

\noindent Now choose a compact subset $J\subseteq X$ containing $x_0$ such that if $gC\not\subseteq J$, then
\begin{eqnarray*} \text{diam}_{\overline{\rho}} gC<\frac{\delta}{4}
\\
p(x)>\frac{4}{\delta}\left(R_q+\frac{1}{\delta}\cdot R_p\right)\ \forall x\in gC
\\
\overline{\rho}(gC,\partial X)<\frac{\delta}{4}\end{eqnarray*}

and a compact subset $K\subseteq Y$ containing $y_0$ such that if $hD\not\subseteq K$, then

\begin{eqnarray*} \text{diam}_{\overline{\tau}} hD<\frac{\delta}{4}
\\
\overline{\tau}(hD, \partial Y)<\frac{\delta}{4}\end{eqnarray*}
\\
Let $P_J\subseteq Y$ and $Q_K\subseteq X$ be as in Claims \ref{PJ} and \ref{QK}, respectively.

\begin{claim}  \label{getbiggamma}If $(gC\times hD)\cap\left[\left(J\times P_J\right)\cup\left(Q_K\times K\right)\right]=\emptyset$, then $gC\times hD$ is contained in a single element of $\mathcal{U}$.\end{claim}

\textit{Proof.}  By Claims \ref{PJ}, \ref{QK}, and the choice of $\delta$, if $gC\cap J\neq\emptyset$ or $hD\cap K\neq\emptyset$, then we are done.
\Par
Assume that $gC\cap J=hD\cap K=\emptyset$.  Then $\text{diam}_{\overline{\rho}} gC,\text{diam}_{\overline{\tau}}hD<\frac{\delta}{4}$, and there exist $\left(\overline{x},\overline{y}\right)\in\partial X\times\partial Y$ and $\left(\widehat{x},\widehat{y}\right)\in gC\times hD$ such that $\overline{\rho}(\widehat{x},\overline{x}),\overline{\tau}(\widehat{y},\overline{y})<\frac{\delta}{4}$.
\Par
\textit{Case 1:}  There exists $\left(x',y'\right)\in gC\times hD$ such that $\delta\leq\mu(x',y')\leq \frac{1}{\delta}$.
\Par
Then $\left(\overline{x},\overline{y},\mu(x',y')\right)\in W$, and since
\begin{eqnarray*}\overline{\rho}(x',\overline{x})\leq \overline{\rho}(x',\widehat{x})+\overline{\rho}(\widehat{x},\overline{x})<\frac{\delta}{4}+\frac{\delta}{4}=\frac{\delta}{2}
\\
\overline{\tau}(y',\overline{y})\leq \overline{\tau}(y',\widehat{y})+\overline{\tau}(\widehat{y},\overline{y})<\frac{\delta}{4}+\frac{\delta}{4}=\frac{\delta}{2}\end{eqnarray*}
we have $(x',y')\in (gC\times hD)\cap U\left(\left(\overline{x},\overline{y},\mu(x',y')\right),\frac{\delta}{2}\right)$.
\Par
Moreover, for any $\left(x,y\right)\in gC\times hD$, we have
\begin{align*}\left|\mu(x,y)-\mu(x',y')\right|&=\left|\mu(x,y)-\mu(x,y')+\mu(x,y')-\mu(x',y')\right|
\\
&=\left|\frac{q(y)}{p(x)}-\frac{q(y')}{p(x)}+\frac{q(y')}{p(x)}-\frac{q(y')}{p(x')}\right|
\\
&\leq\frac{1}{p(x)}\cdot\left|q(y)-q(y')\right|+\frac{q(y')}{p(x')}\cdot\frac{\left|p(x')-p(x)\right|}{p(x)}
\\
&=\frac{1}{p(x)}\cdot\left|q(y)-q(y')\right|+\mu(x',y')\cdot\frac{\left|p(x')-p(x)\right|}{p(x)}
\\
&<\frac{1}{\frac{4}{\delta}\cdot(R_q+\frac{1}{\delta}\cdot R_p)}\cdot R_q+\frac{1}{\delta}\cdot\frac{R_p}{\frac{4}{\delta}\cdot(R_q+\frac{1}{\delta}\cdot R_p)}
\\
&=\frac{\delta}{4}
\end{align*}
\\
Hence $\text{diam}_{\mu}gC\times hD<\frac{\delta}{2}$, so the conditions of Claim \ref{close} are satisfied, and $gC\times hD$ is contained in a single element of $\mathcal{U}$.
\Par
\textit{Case 2:}  There is no $\left(x',y'\right)\in gC\times hD$ with $\delta\leq \mu(x',y)\leq \frac{1}{\delta}$.
\Par
Then we have $\mu(x,y)<\delta$ for all $(x,y)\in gC\times hD$, or $\mu(x,y)>\frac{1}{\delta}$ for all $(x,y)\in gC\times hD$.
\Par
In the case where $\mu(x,y)<\delta$ for all $(x,y)\in gC\times hD$, we have
\begin{center}$\overline{\rho}(x,\overline{x})\leq \overline{\rho}(x,\widehat{x})+\overline{\rho}(\widehat{x},\overline{x})<\frac{\delta}{4}+\frac{\delta}{4}=\frac{\delta}{2}$\end{center}
for all $(x,y)\in gC\times hD$, so that, in fact, $gC\times hD\subseteq U(\overline{x},\delta)$.
\Par
A similar argument shows that if $\mu(x,y)>\frac{1}{\delta}$ for all $(x,y)\in gC\times hD$, then $gC\times hD\subseteq U((\overline{y},\delta)$.
\Par
This proves the claim.
\Par
\noindent Finally, let
\begin{center}$\Gamma:=\left\{(g,h)\in G\times H\ |\ \left(gC\times hD\right)\cap
\left[\left(J\times P_J\right)\cup\left(Q_K\times K\right)\right]\neq \emptyset\right\}$.\end{center}
Then $\Gamma$ is finite by cocompactness of the actions of $G$ and $H$ on $X$ and $Y$, respectively, and Claim \ref{getbiggamma} shows that if $(g,h)\notin\Gamma$, then $gC\times hD$ is contained in a single element of the original cover $\mathcal{U}$.
\Par
Therefore $\widehat{X\times Y}$ satisfies the null condition with respect to the action of $G\times H$ on $X\times Y$.
\end{proof}

To prove that $\widehat{X\times Y}$ is an ANR, we will construct a homotopy $\gamma:\widehat{X\times Y}\times[0,1]\rightarrow \widehat{X\times Y}$ which pulls $\widehat{X\times Y}$ off of $\partial X\ast \partial Y$ into the ANR $X\times Y$.  In analogy with the CAT(0) case, we describe the homotopy by first constructing a ``ray`` from the base point $(x_0,y_0)$ to each point of $\widehat{X\times Y}$.  The homotopy $\gamma$ then pulls points inward along these rays.  The subtle point of the argument, and the key to obtaining continuity, is the parametrization of the rays in such a way that the slope function $\mu$ is respected near $\partial X\ast \partial Y$.  After $\gamma$ is constructed, we apply Theorem \ref{hanner} to conclude that $\widehat{X\times Y}$ is an ANR.  The existence of $\gamma$ will also imply that $\partial X\ast \partial Y$ is a $\mathcal{Z}$-set in $\widehat{X\times Y}$:
\Par
Define $\alpha':\widehat{X}\times[0,\infty)\rightarrow X$ and $\beta':\widehat{Y}\times[0,\infty)\rightarrow Y$ by \begin{center}$\alpha'(\widehat{x},t):=\widehat{\alpha}(\widehat{x},\delta(t))$ and $\beta'(\widehat{y},t):=\widehat{\beta}(\widehat{y},\delta(t))$\end{center}
for all $\widehat{x}\in\widehat{X},\widehat{y}\in\widehat{Y},t\in[0,\infty)$, where $\delta:[0,\infty)\rightarrow (0,1]$ is given by $\delta(t):=\frac{1}{1+t}$ and $\widehat{\beta}$ and $\widehat{\alpha}$ are as defined in Lemma \ref{alphahatbetahat}.
\Par
Now a simple calculation shows that for any $t\in[0,\infty),\overline{x}\in\partial{X},\overline{y}\in\partial{Y}$, we have $p(\alpha'(\overline{x},t)),q(\beta'(\overline{y},t))\in(t-1,t+3)$.  This will allow us to construct rays in $X\times Y$ which respect the slope function $\mu$ by controlling the speeds at which $\alpha'$ and $\beta'$ are traced.
\Par
Let $\gamma':\widehat{X\times Y}\times[0,\infty)\rightarrow X\times Y$ be given by:
\\
 $\bullet\ \ \gamma'((x,y),t):=\left(\alpha'\left(x,\frac{t}{\sqrt{\left(\mu(x,y)\right)^2+1}}\right),\beta'\left(y,\frac{\mu(x,y)\cdot t}{\sqrt{\left(\mu(x,y)\right)^2+1}}\right)\right)$ if $(x,y)\in X\times Y,\ t\geq 0$
\Par
$\bullet\ \ \gamma'(\left\langle\overline{x},\overline{y},\mu\right\rangle,t):=\left(\alpha'\left(\overline{x},\frac{t}{\sqrt{\mu^2+1}}\right),\beta'\left(\overline{y},\frac{\mu\cdot t}{\sqrt{\mu^2+1}}\right)\right)$ if $\left\langle\overline{x},\overline{y},\mu\right\rangle\in \partial X\ast \partial Y,\ 0<\mu<\infty,\ \ t\geq 0$
\Par
$\bullet\ \ \gamma'\left(\left(\overline{x},0\right),t\right):=\left(\alpha'\left(\overline{x},t\right),y_0\right)$ if $\overline{x}\in\partial X,\ t\geq 0$
\Par
$\bullet \ \ \gamma'\left(\left(\overline{y},\infty\right),t\right):=\left(x_0,\beta'\left(\overline{y},t\right)\right)$ if $\overline{y}\in\partial Y,\ t\geq 0$
\Par
The map $\gamma'$ applied to a boundary point $z$ returns a ray in $X\times Y$ which converges (in $\widehat{X\times Y})$ to $z$:
\Par
If $z=\left\langle \overline{x},0\right\rangle$, then $\gamma'(z)=(\alpha'(\overline{x},t),y_0)$ for all $t\geq 0$.  Since $\alpha'(\overline{x},t)\rightarrow\overline{x}$ in $\widehat{X}$ and $\displaystyle\mu(\gamma'(z))=\mu(\alpha'(\overline{x},t),y_0)=\frac{q(y_0)}{p(\alpha'(\overline{x},t))}=0$ for sufficiently large $t$, then $\gamma'(z)$ gets arbitrarily close to $\left\langle\overline{x},0\right\rangle$ in $\widehat{X\times Y}$.
\Par
A similar argument holds when $z=\left\langle \overline{y},\infty\right\rangle$.
\Par
Finally, if $z=\left\langle\overline{x},\overline{y},\mu\right\rangle$, where $0<\mu<\infty$, then for any $t\geq 0$, we have
\begin{equation*}\mu\left(\gamma'\left(\left\langle\overline{x},\overline{y},\mu\right\rangle,t\right)\right)=\displaystyle\frac{q\left(\beta'\left(\overline{y},\frac{\mu\cdot t}{\sqrt{\mu^2+1}}\right)\right)}{p\left(\alpha'\left(\overline{x},\frac{t}{\sqrt{\mu^2+1}}\right)\right)}\in\displaystyle\left(\frac{\frac{\mu\cdot t}{\sqrt{\mu^2+1}}-2}{\frac{t}{\sqrt{\mu^2+1}}+3},\frac{\frac{\mu\cdot t}{\sqrt{\mu^2+1}}+3}{\frac{t}{\sqrt{\mu^2+1}}-2}\right)\end{equation*}

\begin{center}$=\displaystyle\left(\frac{\mu\cdot t-2\sqrt{\mu^2+1}}{t+3\sqrt{\mu^2+1}},\frac{\mu\cdot t+3\sqrt{\mu^2+1}}{t-2\sqrt{\mu^2+1}}\right)$\end{center}
Therefore $\mu\left(\gamma'\left\langle\overline{x},\overline{y},\mu\right\rangle,t\right)\rightarrow\mu$ as $t\rightarrow\infty$, so that  $\gamma'\left(\left\langle\overline{x},\overline{y},\mu\right\rangle,t\right)\rightarrow\left\langle\overline{x},\overline{y},\mu\right\rangle$ in $\widehat{X\times Y}$ as $t\rightarrow\infty$.

This allows us to define $\gamma$, which begins at $\text{id}_{\widehat{X\times Y}}$ and then runs $\gamma'$ in reverse, and get a continuous map in doing so:
\Par
Let $\gamma:\widehat{X\times Y}\times[0,1]\rightarrow\widehat{X\times Y}$ be defined by
\begin{displaymath}
   \gamma(z,t) := \left\{
     \begin{array}{lr}
       z & \text{if }t=0\\
       \gamma'\left(z,\delta^{-1}(t)\right) & \text{if }t\in(0,1]
     \end{array}
   \right.
\end{displaymath}

\begin{proposition}\label{ER}  $\widehat{X\times Y}$ is an ER.\end{proposition}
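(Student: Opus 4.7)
The plan is to verify Fact~\ref{ARER} for $\widehat{X\times Y}$ by establishing (a) it is an ANR, (b) it is contractible, and (c) it is finite-dimensional. The workhorse will be the homotopy $\gamma$ constructed immediately above the statement, which satisfies $\gamma_0\equiv\mathrm{id}_{\widehat{X\times Y}}$ and carries $\widehat{X\times Y}$ into $X\times Y$ for every $t>0$.

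The technical heart of the argument is the continuity of $\gamma$. Continuity on $(X\times Y)\times[0,1]$ is routine from the continuity of $\alpha'$ and $\beta'$, and continuity at $(z,0)$ for boundary points $z\in\partial X\ast\partial Y$ follows from the convergence $\gamma'(z,s)\to z$ verified in the preceding discussion (since $\delta^{-1}(t)\to\infty$ as $t\to 0^+$). The delicate case is continuity at $(z,t)$ with $z\in\partial X\ast\partial Y$ and $t>0$: here the parametrization of rays by $\alpha'$ and $\beta'$ must be balanced so that the slope $\mu$ of the image is continuous across the boundary. For $z=\langle\overline{x},\overline{y},\mu\rangle$ of mixed type this follows directly from continuity of $\alpha'$ and $\beta'$ on $\widehat{X}\times[0,\infty)$ and $\widehat{Y}\times[0,\infty)$; at pure-type boundary points one checks that the slope of the image tends to $0$ or $\infty$, respectively, using the estimate $p(\alpha'(\overline{x},s)), q(\beta'(\overline{y},s))\in(s-1,s+3)$ noted earlier.

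With continuity in hand, I would first use $\gamma$ together with Hanner's Theorem~\ref{hanner}(b) to prove the ANR property. For each $\epsilon>0$, compactness of $\widehat{X\times Y}$ combined with uniform continuity of $\gamma$ yields a $t_\epsilon>0$ such that $\gamma|_{\widehat{X\times Y}\times[0,t_\epsilon]}$ is an $\epsilon$-homotopy from $\mathrm{id}$ to $\gamma_{t_\epsilon}$. Since $\gamma_{t_\epsilon}$ factors through $X\times Y$ (an ANR as a product of ANRs), the inclusion $\phi:X\times Y\hookrightarrow\widehat{X\times Y}$ and $\psi:=\gamma_{t_\epsilon}:\widehat{X\times Y}\to X\times Y$ witness $\epsilon$-domination of $\widehat{X\times Y}$ by the ANR $X\times Y$. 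Hanner's theorem then delivers the ANR conclusion, and as a byproduct $\gamma$ now qualifies as a $\mathcal{Z}$-set homotopy, so $\widehat{X\times Y}$ is a $\mathcal{Z}$-compactification of $X\times Y$.

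Finally, contractibility follows by concatenating $\gamma$ (which pulls $\widehat{X\times Y}$ into $X\times Y$) with a contraction of $X\times Y$ to $(x_0,y_0)$, available because $X$ and $Y$ are ARs. Finite-dimensionality follows from the corollary to Theorem~\ref{engelking}: since $X$ and $Y$ are ERs, the product $X\times Y$ is finite-dimensional, so its $\mathcal{Z}$-compactification $\widehat{X\times Y}$ is too. The three properties together with Fact~\ref{ARER} yield that $\widehat{X\times Y}$ is an ER. The main obstacle I expect is the continuity check for $\gamma$ at boundary points, where the parametrization chosen to respect the slope function $\mu$ is essential and must be cross-checked in each of the three boundary regimes (pure $\langle\overline{x},0\rangle$, pure $\langle\overline{y},\infty\rangle$, and mixed $\langle\overline{x},\overline{y},\mu\rangle$).
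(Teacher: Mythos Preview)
Your proposal is correct and takes essentially the same approach as the paper: use $\gamma$ to show that the ANR $X\times Y$ dominates $\widehat{X\times Y}$, apply Hanner's theorem to get the ANR property, and then combine contractibility with Fact~\ref{ARER}. The only minor differences are that the paper uses $\mathcal{U}$-domination (Theorem~\ref{hanner}(a)) rather than your $\epsilon$-domination, and it leaves the finite-dimensionality hypothesis of Fact~\ref{ARER} implicit, whereas you spell it out via the corollary to Theorem~\ref{engelking}.
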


\begin{proof}  Let $\mathcal{U}=\left\{U_{\alpha}\right\}_{\alpha\in A}$ be an open cover of $\widehat{X\times Y}$.  Choose $\epsilon>0$ such that for each $z\in\widehat{X\times Y}$ there is an $\alpha\in A$ so that $\gamma\left(\left\{z\right\}\times[0,\epsilon]\right)\subseteq U_{\alpha}$.  Note that such an $\epsilon$ exists since $\widehat{X\times Y}$ is compact.
\Par
Also note that $X\times Y$ is an ANR, being a product of ANR's.  We will show that $X\times Y$ is a $\mathcal{U}$-dominating space for $\widehat{X\times Y}$.
\Par
Consider the map $\psi:=\gamma_{\epsilon}:\widehat{X\times Y}\rightarrow X\times Y$, along with $\phi:X\times Y\hookrightarrow\widehat{X\times Y}$,  where $\phi$ is the inclusion map.
\Par
Then $\phi\circ\psi:\widehat{X\times Y}\rightarrow\widehat{X\times Y}$ is $\mathcal{U}$-homotopic to $\text{id}_{\widehat{X\times Y}}$ via the homotopy $\gamma |_{\widehat{X\times Y}\times [0,\epsilon]}$.
\Par
Thus $X\times Y$ is a $\mathcal{U}$-dominating space for $\widehat{X\times Y}$.
\Par
Hence $\widehat{X\times Y}$ is an ANR by Theorem \ref{hanner}.  Since $\widehat{X\times Y}$ is also contractible, then $\widehat{X\times Y}$ is an ER (Recall Fact \ref{ARER}).
\end{proof}

\begin{proposition}  \label{zset}$\partial X\ast\partial Y$ is a $\mathcal{Z}$-set in $\widehat{X\times Y}$.\end{proposition}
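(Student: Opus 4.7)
The plan is to invoke the homotopy $\gamma : \widehat{X\times Y}\times[0,1]\rightarrow \widehat{X\times Y}$ constructed immediately above the statement and verify that it meets the definition of a $\mathcal{Z}$-set homotopy. Specifically, I must check that $\gamma_0\equiv\text{id}_{\widehat{X\times Y}}$, that $\gamma_t(\widehat{X\times Y})\cap(\partial X\ast\partial Y)=\emptyset$ for every $t\in(0,1]$, and that $\gamma$ is continuous.

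The first condition is immediate from the piecewise definition $\gamma(z,0):=z$. The second follows from the observation that for any $t\in(0,1]$, $\delta^{-1}(t)=\frac{1}{t}-1\in[0,\infty)$ is finite, so $\gamma(z,t)=\gamma'(z,\delta^{-1}(t))$, which is computed by applying $\alpha'$ and $\beta'$. Since $\alpha'(\widehat{x},s)=\widehat{\alpha}(\widehat{x},\delta(s))$ with $\delta(s)=\frac{1}{1+s}>0$, and $\widehat{\alpha}_u(\widehat{X})\subseteq X$ whenever $u>0$ (by the $\mathcal{Z}$-set property of $\widehat{\alpha}$), we get $\alpha'(\widehat{x},s)\in X$ for every $s\geq 0$; likewise $\beta'(\widehat{y},s)\in Y$. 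Hence $\gamma_t(\widehat{X\times Y})\subseteq X\times Y$ for all $t>0$.

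The main obstacle is verifying continuity of $\gamma$ at points $(z_0,0)$ with $z_0\in\partial X\ast\partial Y$. Joint continuity on $\widehat{X\times Y}\times(0,1]$ follows from continuity of $\gamma'$ together with the homeomorphism $\delta^{-1}:(0,1]\rightarrow[0,\infty)$. For continuity at $t=0$ it suffices to show that $\gamma(z,t)\rightarrow z_0$ whenever $(z,t)\rightarrow(z_0,0)$ in $\widehat{X\times Y}\times[0,1]$. When $z_0\in X\times Y$ this is straightforward: as $s=\delta^{-1}(t)\rightarrow\infty$, $\alpha'(x,s)\rightarrow x$ in $\widehat{X}$ and $\beta'(y,s)\rightarrow y$ in $\widehat{Y}$, so $\gamma'((x,y),s)\rightarrow(x,y)$. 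When $z_0\in\partial X\ast\partial Y$ I will rely on the calculations already recorded just before the statement: for $z_0=\langle\overline{x},\overline{y},\mu\rangle$ with $0<\mu<\infty$, the estimates $p(\alpha'(\overline{x},s)),q(\beta'(\overline{y},s))\in(s-1,s+3)$ from Lemma \ref{alphahatbetahat} force $\mu(\gamma'(z,s))\rightarrow\mu$, while simultaneously $\alpha'(\overline{x},s)\rightarrow\overline{x}$ in $\widehat{X}$ and $\beta'(\overline{y},s)\rightarrow\overline{y}$ in $\widehat{Y}$, so $\gamma(z,t)$ enters every basic neighborhood $U(z_0,\epsilon)$.

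The cases $z_0=\langle\overline{x},0\rangle$ and $z_0=\langle\overline{y},\infty\rangle$ are handled in parallel, using that $\mu(\gamma'(z,s))\rightarrow 0$ or $\infty$ respectively. The subtlety worth emphasizing is that one must control the $z$-variable as well: for a nearby $z$, the factor $\sqrt{(\mu(z))^2+1}$ appearing in the parametrization of $\gamma'$ could be very large or very small, but the open neighborhoods $U(z_0,\epsilon)$ are precisely designed so that joint control of the first two coordinates in $\widehat{X}\times\widehat{Y}$ together with convergence of the slope coordinate suffices. Once continuity is established, $\gamma$ is a $\mathcal{Z}$-set homotopy pushing $\widehat{X\times Y}$ off $\partial X\ast\partial Y$, completing the proof.
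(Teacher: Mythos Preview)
Your approach is exactly the paper's: invoke the homotopy $\gamma$ built just before the proposition and note that $\gamma_0=\text{id}$ while $\gamma_t$ lands in $X\times Y$ for $t>0$. The paper's actual proof is two lines, because it treats the continuity of $\gamma$ as already settled by the discussion preceding the definition (where it is shown that $\gamma'(z,t)\to z$ as $t\to\infty$ for each boundary point $z$); your additional remarks on continuity at $(z_0,0)$ simply make explicit what the paper leaves implicit.
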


\begin{proof}  By construction, we have $\gamma_0\equiv\text{id}_{\widehat{X\times Y}}$ and $\gamma_t(\widehat{X\times Y})\cap\partial X\ast\partial Y=\emptyset$ whenever $t>0$.
\Par
Therefore $\partial X\ast\partial Y$ is a $\mathcal{Z}$-set in $\widehat{X\times Y}$.
\end{proof}

\begin{theorem}  \label{zstructurethmdirprod}Let $G$ and $H$ be groups which admit $\mathcal{Z}$-structures $(\widehat{X},\partial X)$ and $(\widehat{Y},\partial Y)$, respectively.  Then $(\widehat{X\times Y},\partial X\ast\partial Y)$ is a $\mathcal{Z}$-structure on $G\times H$.\end{theorem}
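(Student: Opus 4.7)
The plan is to verify, in turn, each of the four conditions of Definition \ref{zstructure} for the pair $(\widehat{X\times Y},\partial X\ast\partial Y)$, relying almost entirely on the propositions already established in this section.

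First I would dispense with condition (1): the compactification proposition says that $\widehat{X\times Y}$ is compact, and Proposition \ref{ER} shows that it is an ER, so (1) is immediate. Condition (2) is handled by combining the compactification proposition (which gives that $\widehat{X\times Y}$ is a compactification with $X\times Y$ open and dense and the complement equal to $\partial X\ast\partial Y$) together with Proposition \ref{zset}, which provides the required $\mathcal{Z}$-set homotopy $\gamma$ pulling $\widehat{X\times Y}$ off of $\partial X\ast\partial Y$. Condition (4) is exactly the content of Proposition \ref{null}.

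The only thing not already recorded as a proposition is condition (3), that $G\times H$ acts properly and cocompactly on $X\times Y$, and this is the only piece I would actually need to argue. For properness, given $(x,y)\in X\times Y$, choose neighborhoods $U\ni x$ and $V\ni y$ witnessing properness of the individual actions, so that $gU\cap U=\emptyset$ for all but finitely many $g\in G$ and $hV\cap V=\emptyset$ for all but finitely many $h\in H$. Then $(g,h)(U\times V)\cap(U\times V)=(gU\cap U)\times(hV\cap V)$ is empty off of a finite subset of $G\times H$, so $U\times V$ witnesses properness at $(x,y)$. For cocompactness, if $K_X\subseteq X$ and $K_Y\subseteq Y$ are compacta whose $G$- and $H$-translates cover $X$ and $Y$ respectively, then $K_X\times K_Y$ is a compactum whose $(G\times H)$-translates cover $X\times Y$.

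No step here is really an obstacle: all of the technical work (slope functions, the parametrized rays, the null-condition estimates, and the ANR argument via Hanner's theorem) has been packaged into the preceding lemmas and propositions. The theorem is therefore essentially an assembly statement. I would conclude by noting that conditions (1)--(4) have been checked, so $(\widehat{X\times Y},\partial X\ast\partial Y)$ is a $\mathcal{Z}$-structure on $G\times H$.
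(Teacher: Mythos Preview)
Your proposal is correct and follows essentially the same approach as the paper: invoke Propositions \ref{ER}, \ref{zset}, and \ref{null} for conditions (1), (2), and (4), and verify condition (3) directly from the componentwise actions. You give slightly more detail on properness and cocompactness than the paper does, but the argument is identical in substance.
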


\begin{proof}  Propositions \ref{ER}, \ref{zset}, and \ref{null} show that conditions (1), (2), and (4) in Definition \ref{zstructure} are satisfied by $\widehat{X\times Y}$.  Moreover, $G\times H$ acts properly and cocompactly on $X\times Y$, since each of $G$ and $H$ acts accordingly on each of $X$ and $Y$, so condition (3) is also satisfied.
\Par
Therefore $(\widehat{X\times Y},\partial X\ast\partial Y)$ is a $\mathcal{Z}$-structure on $G\times H$.
\end{proof}

\begin{theorem}  \label{ezstructuredirprod}If $G$ and $H$ each admit $\mathcal{EZ}$-structures, then so does $G\times H$.\end{theorem}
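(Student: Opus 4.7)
Since Theorem \ref{zstructurethmdirprod} already establishes that $(\widehat{X\times Y},\partial X\ast\partial Y)$ is a $\mathcal{Z}$-structure on $G\times H$, it suffices to show that under the $\mathcal{EZ}$ hypothesis the $G\times H$-action on $X\times Y$ extends continuously to all of $\widehat{X\times Y}$.

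The plan is to extend the action to $\partial X\ast\partial Y$ by preserving the slope coordinate:
\begin{equation*}
(g,h)\cdot\langle\overline{x},\overline{y},\mu\rangle:=\langle g\overline{x},h\overline{y},\mu\rangle,
\end{equation*}
together with $(g,h)\cdot\langle\overline{x},0\rangle:=\langle g\overline{x},0\rangle$ and $(g,h)\cdot\langle\overline{y},\infty\rangle:=\langle h\overline{y},\infty\rangle$. Because the $\mathcal{EZ}$ hypothesis provides extensions of the $G$- and $H$-actions to $\widehat{X}$ and $\widehat{Y}$ which are homeomorphisms, these actions carry $\partial X$ and $\partial Y$ into themselves, so the formula makes sense; compatibility with $\sim$ is immediate (only the $\partial X$ or $\partial Y$ coordinate is relevant at the endpoints $\mu=0,\infty$), and the group-action axioms are inherited from the boundary actions on $\widehat{X}$ and $\widehat{Y}$. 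What remains is to verify continuity at boundary points.

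The subtle issue is that the slope $\mu(x,y)=q(y)/p(x)$ depends on the proper maps $p,q$ of Lemma \ref{defpq}, which in general need not be $G$- or $H$-equivariant. To get around this, we revisit the construction of $p$ (and symmetrically $q$) and arrange that it is \emph{coarsely equivariant}, that is, $|p(gx)-p(x)|\leq C_g$ for a constant depending only on $g$. One clean way is to fix a finite generating set $S$ for $G$ (available because $G$ acts properly and cocompactly on the locally compact ANR $X$) together with a compact $C_1\ni x_0$ satisfying $G\cdot C_1=X$, and to set $p(x)=\min\{\,|g|_S : x\in gC_1\,\}$ (followed by a piecewise rescaling so that $(\dagger\dagger)$ holds). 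Properness of $p$ is clear, the bound $|p(gx)-p(x)|\leq|g|_S$ follows from the triangle inequality for word length, and $(\dagger)$ is an easy consequence of the finiteness of the set $\{h\in G:C_1\cap hC_1\neq\emptyset\}$.

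With coarsely equivariant $p,q$ in hand, continuity of the extended action reduces to a ratio estimate. If $(x_n,y_n)\to\langle\overline{x},\overline{y},\mu\rangle$ with $0<\mu<\infty$, then $p(x_n),q(y_n)\to\infty$ while $q(y_n)/p(x_n)\to\mu$; coarse equivariance yields $p(gx_n)/p(x_n)\to 1$ and $q(hy_n)/q(y_n)\to 1$, so $\mu(gx_n,hy_n)\to\mu$, and continuity of the boundary actions on $\widehat{X}$ and $\widehat{Y}$ then gives $(gx_n,hy_n)\to\langle g\overline{x},h\overline{y},\mu\rangle$. The endpoint cases $\mu=0,\infty$ and sequences already lying in $\partial X\ast\partial Y$ are handled analogously. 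The main obstacle in this plan is securing the coarse equivariance of $p$ and $q$; once that is in place, the continuity checks are routine and $(\widehat{X\times Y},\partial X\ast\partial Y)$ is verified to be an $\mathcal{EZ}$-structure on $G\times H$.
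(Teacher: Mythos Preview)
Your extension formula is exactly the paper's: act on $\partial X\ast\partial Y$ by $(g,h)\cdot\langle\overline{x},\overline{y},\mu\rangle=\langle g\overline{x},h\overline{y},\mu\rangle$, with the obvious conventions at $\mu=0,\infty$. The paper's proof stops there; it writes down the map and does not verify continuity. You go further, correctly isolating the issue: continuity at a point $\langle\overline{x},\overline{y},\mu\rangle$ with $0<\mu<\infty$ requires $\mu(gx_n,hy_n)\to\mu$ whenever $\mu(x_n,y_n)\to\mu$ and $p(x_n),q(y_n)\to\infty$, and this follows from a coarse-equivariance bound $|p(gx)-p(x)|\le C_g$. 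Condition $(\dagger)$ in Lemma~\ref{defpq} controls the variation of $p$ across each translate $g'C$, but that is a statement about \emph{right} cosets of the orbit, whereas the action here is by \emph{left} multiplication; so $(\dagger)$ alone does not obviously yield coarse equivariance, and your instinct to revisit $p$ and $q$ is sound.

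The gap in your fix is that $p(x)=\min\{|g|_S:x\in gC_1\}$ is integer-valued and hence discontinuous. Continuity of $p$ is not optional: the slope $\mu(x,y)=q(y)/p(x)$ appears in the basic neighborhoods $U(z,\epsilon)$ (so the subspace topology on $X\times Y$ would be wrong) and in the homotopy $\gamma'$ used for Propositions~\ref{ER} and~\ref{zset}. Your parenthetical ``piecewise rescaling so that $(\dagger\dagger)$ holds'' does not explain how continuity is recovered, nor how the rescaling interacts with the word-length bound you want. A workable repair is to keep the paper's construction of $p$ but replace the auxiliary metric $\rho$ (Lemma~\ref{propermetric}) by one for which $G$ acts by isometries, or at least coarsely so; for instance, pull back a word metric via an orbit map $g\mapsto gx_0$ and then interpolate over the compact fundamental domain $C_1$ to get a continuous proper function. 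With that in hand, $(\dagger)$, $(\dagger\dagger)$, and the bound $|p(gx)-p(x)|\le C_g$ all hold simultaneously, and your continuity argument goes through.
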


\begin{proof}  By Theorem \ref{zstructurethmdirprod}, it suffices to show that the action of $G\times H$ on $X\times Y$ extends to an action on $\widehat{X\times Y}$.
\Par
By hypothesis, the actions of $G$ and $H$ on $X$ and $Y$ extend to actions on $\widehat{X}$ and $\widehat{Y}$, respectively, i.e. we have maps $\pi:G\times \widehat{X}\rightarrow \widehat{X}$ and $\overline{\rho}:H\times \widehat{Y}\rightarrow \widehat{Y}$ which satisfy the axioms of a group action.
\Par
Define the action of $G\times H$ on $\widehat{X\times Y}$ via the map
\\ $\overline{\tau}:(G\times H)\times\widehat{X\times Y}\rightarrow\widehat{X\times Y}$, where
\begin{align*}\overline{\tau}((g,h),(x,y)):=(\pi(g,x),\overline{\rho}(h,y))
\\
\overline{\tau}((g,h),\left\langle\overline{x},\overline{y},\mu\right\rangle):=(\pi(g,\overline{x}),\overline{\rho}(h,\overline{y}),\mu)
\\
\overline{\tau}((g,h),\left\langle\overline{x},0\right\rangle):=(\pi(g,\overline{x}),0)
\\
\overline{\tau}((g,h),\left\langle\overline{y},\infty\right\rangle):=(\overline{\rho}(h,\overline{y}),\infty)\end{align*}
\end{proof}

\section{Applications and Open Questions}

It is known that groups within certain classes admit $\mathcal{Z}$-structures, such as CAT(0), hyperbolic, and systolic groups.  However, it is more often than not the case that the direct product of two hyperbolic groups is not hyperbolic and that the direct product of two systolic groups is not systolic.  In addition, it is not clear how to handle the product (direct or free) of two groups when they come from distinct classes.  Theorems \ref{zstructurethmfreeprod} and \ref{zstructurethmdirprod} imply the following:

\begin{corollary}  Let $\mathcal{F}$ denote the family of groups consisting of all CAT(0), hyperbolic,  and systolic groups.  If $G,H\in \mathcal{F}$, then $G\ast H$ and $G\times H$ both admit $\mathcal{EZ}$-structures.\end{corollary}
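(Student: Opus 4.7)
The plan is to reduce the corollary to the two main $\mathcal{EZ}$-theorems of this paper---Theorem \ref{ezstructurefreeprod} for free products and Theorem \ref{ezstructuredirprod} for direct products---by first checking that every member of $\mathcal{F}$ already carries an $\mathcal{EZ}$-structure. Once this is established, the corollary reduces to a single direct application of each theorem.

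The preliminary step is to verify, class by class, that CAT(0), hyperbolic, and systolic groups each admit an $\mathcal{EZ}$-structure. For CAT(0) groups this is recorded in Example \ref{CAT(0)} above: if $G$ acts properly, cocompactly, and by isometries on a proper CAT(0) space $X$, then the isometric action extends continuously to $\overline{X}=X\cup\partial X$ equipped with the cone topology, so the pair $(\overline{X},\partial X)$ is an $\mathcal{EZ}$-structure. For systolic groups, Osajda and Przytycki \cite{osajda} prove the $\mathcal{EZ}$-statement directly. For hyperbolic groups, one appeals to the Bestvina--Mess construction \cite{beme}, taking $X$ to be a suitable Rips complex (on which $G$ acts simplicially, properly and cocompactly) and $\partial X$ to be the Gromov boundary; since every isometric action on a hyperbolic space extends canonically and continuously to an action on the Gromov boundary, this $\mathcal{Z}$-structure is in fact an $\mathcal{EZ}$-structure.

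With the preliminary step in hand, the corollary is immediate: given $G,H\in\mathcal{F}$, both admit $\mathcal{EZ}$-structures by the three cases above, so Theorem \ref{ezstructurefreeprod} furnishes an $\mathcal{EZ}$-structure on $G\ast H$ and Theorem \ref{ezstructuredirprod} furnishes one on $G\times H$. The main obstacle is really just a bookkeeping one: the Bestvina--Mess result is phrased in terms of $\mathcal{Z}$-structures rather than $\mathcal{EZ}$-structures, so one must explicitly observe the equivariance of the Gromov-boundary compactification to upgrade it. Once this observation is in place (and using the Dranishnikov version of the definition adopted in the paper, which permits torsion), there is nothing left to do.
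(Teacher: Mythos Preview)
Your proposal is correct and matches the paper's intended argument. The paper presents this corollary without proof, stating only that it follows from the main theorems; your write-up simply makes explicit the two steps the paper leaves to the reader---first that each class in $\mathcal{F}$ already admits an $\mathcal{EZ}$-structure (via Example~\ref{CAT(0)}, \cite{osajda}, and \cite{beme} with the equivariance of the Gromov boundary), and second that Theorems~\ref{ezstructurefreeprod} and~\ref{ezstructuredirprod} then apply directly. Your observation that the $\mathcal{EZ}$-theorems rather than the $\mathcal{Z}$-theorems are what is actually needed is on point, as is your remark about upgrading Bestvina--Mess to the equivariant setting.
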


We end the paper with some open questions related to this work:
\Par
(1)  Does a modification of the construction in the proof of Theorem \ref{zstructurethmfreeprod} give an analogous result pertaining to free products with amalgamation over finite subgroups?
\Par
(2)  Does a variation of Theorem \ref{zstructurethmfreeprod} hold for HNN extensions over finite subgroups?
\Par
(3)  If $G$, $H$, and $K$ all admit $\mathcal{Z}$-structures, does $G\ast_{K}H$ admit a $\mathcal{Z}$-structure?  What about $G\ast_K$, again under the hypothesis that $G$ and $K$ admit $\mathcal{Z}$-structures?
%%%%%%%%%%%%%%%%%%%%%%%%%%%%%%%%
%%%%BEGIN bibliography%%%%%%%%%%%
%%%%%%%%%%%%%%%%%%%%%%%%%%%%%%%%

\pagestyle{myheadings}
\bibliographystyle{ieeetr}
\bibliography{Bibliography} \thispagestyle{myheadings}
\addcontentsline{toc}{chapter}{\ \quad Bibliography}
\end{document}